\title[Statistical Bergman geometry]{Statistical Bergman geometry}
\author[Gunhee Cho]{Gunhee Cho}
\address{Department of Mathematics\\
	Texas State University\\
	601 University Drive, San Marcos, TX 78666.}
\email{wvx17@txstate.edu}
\author[J. Yum]{Jihun Yum}
\address{Department of Mathematics Education, Jeonbuk National University, 567, Baekje-daero, Deokjin-gu, Jeonju-si, Jeonbuk-do, 54896, Republic of Korea.}
\email{jihun0224@jbnu.ac.kr}
\date{\today}
\newcommand{\RR}{\mathbb{R}}		
\newcommand{\CC}{\mathbb{C}} 		
\newcommand{\NN}{\mathbb{N}} 		
\newcommand{\PP}{\mathbb{P}} 		
\renewcommand{\O}{\Omega}			
\newcommand{\Prob}{\mathcal{P}}     
\newcommand{\Bergman}{\mathcal{B}}  
\newcommand{\Dia}{\mathcal{D}}      
\newcommand{\Poisson}{P}            
\newcommand{\Exp}{\mathrm{E}}
\newcommand{\Cov}{\mathrm{Cov}}
\newcommand{\Var}{\mathrm{Var}}
\newcommand{\norm}[1]{\left\|#1\right\|}                
\newcommand{\inner}[1]{\left\langle{#1}\right\rangle}   
\theoremstyle{plain}
\newtheorem{thm}{Theorem}[section]
\newtheorem{lem}[thm]{Lemma}
\newtheorem{prop}[thm]{Proposition}
\newtheorem{cor}[thm]{Corollary}
\newtheorem{ex}[thm]{Example}
\newtheorem{mainthm}{Theorem}
\theoremstyle{definition}
\newtheorem{defn}[thm]{Definition}
\theoremstyle{remark}
\newtheorem{rmk}[thm]{Remark}
\theoremstyle{property}
\numberwithin{equation}{section}
\begin{document}
	
	\maketitle

	\begin{abstract}
            This paper explores the Bergman geometry of bounded domains $\Omega$ in $\mathbb{C}^n$ through the lens of information geometry 
            by introducing a mapping $\Phi: \Omega \rightarrow \mathcal{P}(\Omega)$, where $\mathcal{P}(\Omega)$ denotes a space of probability measures on $\Omega$. 
            A result by J. Burbea and C. Rao establishes that the pullback of the Fisher information metric, the fundamental Riemannian pseudo-metric in information geometry, via $\Phi$ coincides with the Bergman metric of $\Omega$.
            Building on this idea, we consider $\Omega$ as a statistical model and present several interesting results within this framework.
            
            First, we derive a new statistical curvature formula for the Bergman metric by expressing it in terms of covariance. 
            Second, given a proper holomorphic map $f: \Omega_1 \rightarrow \Omega_2$, we prove that if the induced measure push-forward $\kappa: \mathcal{P}(\Omega_1) \rightarrow \mathcal{P}(\Omega_2)$ preserves the Fisher information metrics, then $f$ must be a biholomorphism.
            Finally, we establish the consistency and the central limit theorem of the Fr\'echet sample mean for Calabi's diastasis function.
	\end{abstract}

	\vspace{10mm}
	
	
	\section{Introduction}

    The Bergman kernel and Bergman metric, named after Stefan Bergman, are fundamental objects in the field of complex geometry and several complex variables. For a bounded domain $\O \subset \CC^n$, the Bergman kernel $\Bergman(z,\xi)$ is defined as the reproducing kernel of the space of $L^2$ holomorphic functions on $\O$. The Bergman metric $g_B$ is then defined as $g_B := i\partial \overline{\partial} \log \Bergman(z,z)$.
    One crucial property of the Bergman kernel and Bergman metric is their invariance under {{all biholomorphisms}.}
    S. Kobayashi's well-known result (\cite{Kobayashi59}) states that, by using $L^2$ holomorphic functions on $\O$, it is possible to map $\O$ to the infinite-dimensional complex projective space (called the Bergman-Bochner map), where the Bergman metric of $\O$ corresponds to the pullback of the Fubini-Study metric on the complex projective space.

    Let $\Prob(\O)$ denote a set of probability measures on $\O$. Information geometry studies a subset in $\Prob(\O)$ that can be parametrized by a finite-dimensional manifold $M$, called a statistical model (Definition~\ref{def: parametrized measure model}). Such a statistical model possesses a natural Riemannian pseudo-metric $g_F$, referred to as the Fisher information metric (Definition~\ref{def: Fisher information metric}). Using geometric structures such as Riemannian metrics, distances, connections, and curvatures, this field aims to enhance our understanding of the properties of $M$ and provide insights into the behavior of learning algorithms and optimization methods. As an example, consider the set $\mathcal{N}$ of all normal distributions on $\RR$. Each element in $\mathcal{N}$ can be uniquely determined by its mean $\mu \in (-\infty, \infty)$ and variance $\sigma^2 > 0$, which allows $\mathcal{N}$ to be parametrized by the upper-half plane in $\CC \cong \RR^2$. Surprisingly, $\mathcal{N}$ with the Fisher information metric is isometric to the upper-half plane with the Poincar\'e metric, up to a constant factor.

    The initial exploration of the relationship between the Fisher information metric and the Bergman metric is credited to J. Burbea and C. Rao in 1984 (\cite{BurbeaJacobRaoCRadhakrishna84}). They immersed a bounded domain $\O$ to a space of probability distributions and demonstrated that the Fisher information metric coincides with the Bergman metric of $\O$.
    Despite this remarkable discovery, further research in this direction has been limited.

    In this paper, we extend this line of investigation. The coincidence between the Bergman metric and the Fisher information metric implies that the Bergman metric can be expressed in terms of covariance (Definition~\ref{def: covariance}). Building on this idea, we derive a new statistical formula for the curvature of the Bergman metric by expressing it in terms of covariance as well.

        \begin{mainthm}[cf. Theorem~\ref{thm: statistical model with regular}, Theorem~\ref{thm: pullback of Fisher is Bergman} and Theorem~\ref{thm: statistical curvature formula}] \label{thm: A}
    	For a bounded domain $\O \subset \CC^n$, let $\Bergman(z, \xi)$ and $g_B(z) =  \sum_{\alpha, \beta} g_{\alpha \overline{\beta}}(z) dz_{\alpha} \otimes d\overline{z}_{\beta}$ be the Bergman kernel and the Bergman metric on $\O$, respectively. 
        Let $\Phi: \O \rightarrow \Prob(\O)$ be given by $\Phi(z) = P(z, \xi) dV(\xi) := \frac{|\Bergman(z, \xi)|^2}{\Bergman(z,z)} dV(\xi)$, where $dV$ is the Lebesgue measure on $\CC^n$. 
        Denote by $l = l(z,\xi) := \log P(z,\xi)$ and $\partial_{\alpha} := \frac{\partial}{\partial z_{\alpha}}$.
            Then
    	\begin{enumerate}
                \item 
                $\Phi: \O \rightarrow \Prob(\O)$ is a 2-integrable statistical model (Definition~\ref{def: parametrized measure model}) and 
                \begin{align*}
                    g_{\alpha \overline{\beta}} = \Cov\left[ \partial_{\alpha} l, \partial_{\beta} l \right],
                \end{align*}
                \item 
                the curvature 4-tensor $R_{\alpha \overline{\beta} \gamma \overline{\delta}}$ of $g_B$ is characterized by 
                \begin{align*}
                R_{\alpha \overline{\beta} \gamma \overline{\delta}}
                &= g_{\alpha \overline{\beta}} g_{\gamma \overline{\delta}} 
                + g_{\alpha \overline{\delta}} g_{\gamma \overline{\beta}}   \\
                &- \Cov\left[ \partial_{\alpha} \partial_{\gamma} l + (\partial_{\alpha} l)(\partial_{\gamma} l) -  \sum_{i=1}^n \Gamma_{\alpha \gamma}^i (\partial_i l), \,
                \partial_{\beta} \partial_{\delta} l + (\partial_{\beta} l)(\partial_{\delta} l) - \sum_{j=1}^n \Gamma_{\beta \delta}^j (\partial_j l) \right], \nonumber 
                \end{align*}
                where $\Gamma^{i}_{\alpha \gamma} = \sum_{\epsilon} g^{i \overline{\epsilon}}(\partial_{\alpha} g_{\gamma \overline{\epsilon}})$ is the Christoffel symbol for $g_B$, provided that $\O$ satisfies Condition $C_4$ (Definition~\ref{def: Condition C_k}).
                Moreover,
                \begin{align*}
                    N_{\alpha \gamma} := \partial_{\alpha} \partial_{\gamma} l + (\partial_{\alpha} l)(\partial_{\gamma} l) - \sum_{i=1}^n \Gamma_{\alpha \gamma}^i (\partial_i l) \in T_{\Phi(z)} \Prob(\O) 
                    \quad \text{ and } \quad 
                    N_{\alpha \gamma} \perp T_{\Phi(z)} \Phi(\O).
                \end{align*}
        	\end{enumerate}
        \end{mainthm}

    We refer to the statistical model $\Phi: \O \rightarrow \Prob(\O)$ as the \emph{Bergman statistical model}. \\

    Furthermore, we show that the map $\Phi: \O \rightarrow \Prob(\O)$ is essentially the same as the Bergman-Bochner map (Proposition~\ref{prop: essentially the Bergman-Bochner map}) and hence $\Phi$ is always an injective immersion (Theorem~\ref{thm: Phi is an injective immersion}).
    Also, we calculate various information--geometric structures (Theorem~\ref{thm: pullback of Information--geometric concepts}) for the Bergman statistical model such as the Amari-Chentsov tensor, the $\alpha$-connection, the Kullback–Leibler (KL) divergence and the entropy $H$, and prove that a Bergman statistical model is never an exponential family (Corollary~\ref{cor: not exponential family}).  
    It is interesting to note that the KL divergence for a Bergman statistical model becomes the Calabi's diastasis function.
    \\


    This new approach offers a novel perspective for reinterpreting Bergman geometry within the framework of information geometry and provides new insights into the geometric structure of complex manifolds through the lens of statistical models.
	One advantage of mapping a bounded domain into $\Prob(\O)$, compared to the Bergman-Bochner map, is that 
	for any measurable function $f: \O_1 \rightarrow \O_2$ between two bounded domains, there exists a natural map 
    $\kappa: \Prob(\O_1) \rightarrow \Prob(\O_2)$ induced by the measure push-forward via $f$.
	One of the most interesting features of the Fisher information metric is its monotonically decreasing property (Theorem \ref{thm: monotonicity Fisher metric}), i.e., $\kappa^* g_{F_2} \le g_{F_1}$.
	This implies that we obtain a new Riemannian metric $(\kappa \circ \Phi_1)^* g_{F_2}$ on $\O_1$ depending on $f$ which is always less than or equal to the Bergman metric of $\O_1$. However, it is important to note that this decreasing property alone does not directly imply the monotonically decreasing property of the Bergman metric (see Remark \ref{rmk: decreasing property of Bergman metric??}).

	When $\kappa$ is a local isometry (with respect to the Fisher information metrics), $f$ is called {\it sufficient} (Definition \ref{def: sufficient statistic}). 		
	Our second theorem says that a local isometry becomes a global isometry. 
	
	\begin{mainthm}[cf. Theorem \ref{thm: sufficient iff 1-1}] \label{thm: B}
		Assume that $f : \O_1 \rightarrow \O_2$ satisfies (\ref{f locally invertible}).
		Then $f$ is sufficient for the Bergman statistical model if and only if $f: f^{-1}(\O_2 \setminus Z) \rightarrow (\O_2 \setminus Z)$ is injective, where $Z := \{ f(z) \in \O_2 : |J_{\RR}f (z)|=0 \}$ is the set of all critical values.
	\end{mainthm}
    
	Theorem~\ref{thm: B} implies that a proper holomorphic map $f: \O_1 \rightarrow \O_2$ is a biholomorphism if and only if $f$ is sufficient for the Bergman statistical model (Corollary \ref{cor: proper holo 1-1 if and only if sufficient}). By applying Theorem~\ref{thm: B}, the second ordered author proved an interesting result that every local Bergman isometry is a biholomorphism as follows.  
    
        \begin{thm}[\cite{yum2026bergman}]
            For a proper holomorphic map $f\colon \Omega_1 \rightarrow \Omega_2$, if $f^* g_{B2} = \lambda g_{B1}$ holds on an open subset $U \subset \Omega_1$ for some constant $\lambda > 0$, then $f$ is a biholomorphism.\\
        \end{thm}


Let $(M, g)$ be a Riemannian manifold. Given a probability distribution $\mu_0$ on $M$, the {\it Fr\'echet mean} and the {\it Fr\'echet sample mean} (also known as the Karcher mean or the barycenter) of $\mu_0$ are defined as minimizers
\begin{align*}
    x_0 := \arg\min_{x \in M} \int_M d_g^2(x, y) d\mu_0(y)  \quad \text{ and } \quad 
    \hat{x}_m := \arg\min_{x \in M} \sum_{i=1}^{m} d_g^2(x, Y_i),
\end{align*}
respectively, where $d_g$ denotes the geodesic distance induced by $g$, and $Y_1, \cdots, Y_m$ are independent and identically distributed (i.i.d.) $M$-valued random variables with common distribution $\mu_0$. In Euclidean space, the Fr\'echet sample mean coincides with the usual arithmetic mean. 
Two fundamental asymptotic properties of the Fr\'echet sample mean $\hat{x}_m$ are consistency and the central limit theorem. 
Consistency asserts that $\hat{x}_m$ converges to the Fr\'echet mean $x_0$ in the sense that
$\lim_{m \to \infty} d_g(x_0, \hat{x}_m) = 0$ almost surely.
The central limit theorem states that $\sqrt{m} \exp_{x_0}^{-1}(\hat{x}_m)$ converges in distribution to a multivariate normal distribution on the tangent space $T_{x_0} M$ as $m \rightarrow \infty$.
These results do not hold in general; rather, they are established under certain geometric and probabilistic conditions. 
Characterizing the precise conditions under which consistency and central limit theorem of the Fr\'chet sample mean hold has been the subject of active research (  \cite{evans2024limit}, \cite{hotz2024central}, \cite{ziezold1977expected}).

E. Calabi (\cite{Calabi53}) introduced the so-called {\it diastasis function} for a real-analytic K\"ahler metric $g$ as a tool to characterize isometric embeddings of complex manifolds into $\CC^{\infty}$ or $\CC \PP^{\infty}$.
Although the diastasis function $\Dia$ is not a geodesic distance $d_g$, it shares many similarities with one. 
$\Dia$ is always non-negative and symmetric, and since $\Dia$ is a potential function of $g$, for a fixed point $p \in M$,
$$\Dia(x,p) = d_g(x,p)^2 + O(d_g(x,p)^4).$$
This approximation implies that the diastasis function serves as a natural alternative to the geodesic distance in many geometric contexts.
In fact, recent studies have explored replacing the squared geodesic distance with the diastasis function in the definition of various geometric invariants, such as diastatic entropy (\cite{bachas2014calabi}, \cite{mossa2014note}, \cite{mossa2016diastatic}) and the Laplace integral ( \cite{loi2005laplace}).

In this paper, on a bounded domain $\O$ with the Bergman metric $g_B$, we prove consistency and the central limit theorem of the Fr\'echet sample mean replacing the squared geodesic distance with the diastasis function $\Dia$ for $g_B$ (Definition~\ref{def: diastasis function}) as follows.

\begin{mainthm}[cf. Theorem~\ref{thm: argmin converges a.s.} and Theorem~\ref{thm: CTL for Bergman metric}] \label{thm: C}
    For a bounded domain $\Omega \subset \mathbb{C}^n$, fix $z_0 \in \Omega$. 
    Let $\{ Z_i \}_{i=1}^{\infty}$ be a sequence of i.i.d. random vectors drawn from the probability distribution 
    \[
    \Phi(z_0) = P(z_0, \xi) dV(\xi) = \frac{|\Bergman(z_0,\xi)|^2}{\Bergman(z_0,z_0)} dV(\xi).
    \]
    Suppose that $\O$ satisfies Condition $C_6$ (Definition~\ref{def: Condition C_k}). 
    Then, with probability tending to 1 as $m \rightarrow \infty$, there exists a sequence random vectors 
        \[
            \hat{z}_m := \arg \operatorname{loc} \min_{z \in \Omega} \sum_{i=1}^{m} \Dia(z, Z_i)
        \]
        such that
    \[
        \hat{z}_m \xrightarrow{p} z_0 \quad \text{ in probability}
    \]
    and 
    \[
    \sqrt{m} \left( \hat{z}_m - z_0 \right) \xrightarrow{d} \mathcal{N}_{\mathbb{C}}(0, g_B(z_0)^{-1},0) \quad \text{ in distribution}
    \]
    as $m \to \infty$, where the complex normal distribution $\mathcal{N}_{\CC}(0, g_B(z_0)^{-1},0)$ is given by equation (\ref{def: complex normal distribution when R=0}).
\end{mainthm}

\noindent
In particular, if $\Omega$ represents a bounded Hermitian symmetric domain, the above theorem follows directly (Corollary \ref{cor: CLT for bounded symmetric domain}).
It is interesting to note that the covariance of the normal distribution is the inverse of the Bergman metric at $z_0$.

The proof of Theorem~\ref{thm: C} relies on the asymptotic normality of the maximum likelihood estimator in the context of information geometry (\cite{LehmannELCasellaGeorge98}). 
One of the main challenges we faced arose from the fact that our sample space $\O$ is not compact, which differs from the standard assumptions required for central limit theorems and the asymptotic normality. 
Compactness of the sample space ensures the existence of the Fr\'echet sample mean and guarantees the finiteness of various expectations necessary for establishing results. 
To overcome this difficulty, we defined $\hat{z}_m$ as a local minimizer, and showed that for each fixed $m$, there exists a small neighborhood around $z_0$ in which such $\hat{z}_m$ exists (Theorem~\ref{thm: argmin converges a.s.}). 
Moreover, by assuming Condition $C_6$, we were able to ensure both the finiteness and the vanishing of certain expectation terms required in the proof. \\

	The article is organized as follows.
	In Section 2, we review basic terminology in Bergman geometry and information geometry.
	In Section 3, we define the map $\Phi: \O \rightarrow \Prob(\O)$ using the Bergman kernel and prove Theorem~\ref{thm: A}.
	In addition, we compute various information--geometric structures.
	In Section 4, we discuss the monotonically decreasing property of the Fisher information metric and prove Theorem~\ref{thm: B}. 
	Theorem~\ref{thm: C} is proved in section 5.
	For readers who may not be familiar with the terminology, we provide basic definitions and well-known theorems in information geometry and probability theory in Appendix A and B, respectively.

	\subsection*{The Acknowledgments}
	The authors would like to express their gratitude to Prof. Patrick Breheny for sharing valuable references, including \cite{LehmannELCasellaGeorge98}.
    This paper was finalized during the Midwest Several Complex Variables Conference, held to commemorate the mathematical contributions and retirement of Prof. Jeffery McNeal. The authors extend their thanks to Prof. Yum-Tong Siu, Prof. John D'Angelo, Prof. Emil Straube, Prof. L\'aszl\'o Lempert, Prof. Debraj Chakrabarti, and Prof. Andrew Zimmer for their insightful comments and words of encouragement.
    The authors would also like to acknowledge Hoseob Seo, SeungJae Lee, Thomas Pawlaschyk and Lucas Kaufmann for the fruitful discussions.
    
	The first ordered author is supported by the Simon Travel Grant, and the second ordered author is supported by the Institute for Basic Science (IBS-R032-D1).


\vspace{5mm}
    
    \section{Preliminaries}

    \subsection{Notation}
    We fix the notation of the paper.
    \begin{enumerate}
        \item[$\bullet$] $\O$: a bounded domain in $\CC^n$, \, $\Xi$: a measurable space,
        \item[$\bullet$] $dV$: the standard Lebesgue measure on $\CC^n$
        \item[$\bullet$] $A^2(\O)$: the space of all $L^2$ holomorphic functions on $\O$,
        \item[$\bullet$] $\Prob(\Xi)$: a space of probability measures on $\Xi$,
        \item[$\bullet$] $\Phi: \O \rightarrow \Prob(\Xi)$: a statistical model, \, $P(z, \cdot)$: the density function of $\Phi(z)$,
        \item[$\bullet$] $l = l(z,\xi) = \log P(z, \xi)$, \, $\partial_{\alpha} = \frac{\partial}{\partial z_{\alpha}}$, \, 
        $\partial_{\overline{\beta}} = \frac{\partial}{\partial \overline{z}_{\beta}}$,  \, $1 \le \alpha,\beta, \gamma, \delta \le n$,
        \item[$\bullet$] $\Bergman(z, \xi)$ : the Bergman kernel, \, $g_B$: the Bergman metric, 
        \item[$\bullet$] $g_F$ : the Fisher information metric,
        \item[$\bullet$] $\norm{f}_{L^m(\Xi, \mu)} = \left( \int_{\Xi} |f(\xi)|^m d\mu(\xi) \right)^{\frac{1}{m}}$,
        \item[$\bullet$] $\Dia$: the Calabi's diastasis function. 
    \end{enumerate}

\vspace{2mm}  
    
\subsection{Statistical model} \label{sec: statistical model}
We mostly follow definitions in \cite{ay2015information} and \cite{ay2017informationbook}. A brief review of information geometry is included in Appendix A (see also \cite{amari2000methods}).

Let $(\Xi, \Sigma)$ be a measurable space. 
We consider the Banach space, for a $\sigma$-finite non-negative (reference) measure $\mu_0$,
\begin{equation*}
	\mathcal{S}(\Xi, \mu_0) := \left\{ \mu = \phi \mu_0 : \phi \in L^1(\Xi, \mu_0) \right\}
\end{equation*}
with the total variation $\norm{\cdot}_{TV}$ as Banach norm. 
This space can be identified via the Banach space isomorphism 
$i_{can} : \mathcal{S}(\Xi, \mu_0) \rightarrow L^1(\Xi, \mu_0)$ defined by $\mu \mapsto \frac{d\mu}{d\mu_0}$, where $\frac{d\mu}{d\mu_0}$ is the Radon-Nikodym derivative.
We also consider the following subsets of $\mathcal{S}(\Xi, \mu_0)$:
\begin{align*}
	\mathcal{M}(\Xi, \mu_0) &:= \left\{ \mu = \phi \mu_0 \;\middle\vert\; \phi \in L^1(\Xi, \mu_0), \; \phi > 0 \; \mu_0 \text{-a.e.} \right\}, \\
	\Prob(\Xi, \mu_0) &:= \left\{ \mu = \phi \mu_0 \;\middle\vert\; \phi \in L^1(\Xi, \mu_0), \; \phi > 0 \; \mu_0 \text{-a.e.}, \; \int_{\Xi} d\mu = 1 \right\}.
\end{align*}
Two elements $\mu_1$ and $\mu_2$ are said to be equal if $\mu_1 = \mu_2$ $\mu_0$-a.e..
An element $\mu$ in $\Prob(\Xi, \mu_0)$ is called a {\it probability measure} on $\Xi$, and $\phi$ is called a {\it probability density function} on $\Xi$.
For the geometry of an infinite-dimensional space of probability measures, we refer to \cite{pistone1995infinite}. \\

Roughly speaking, a statistical model is a finite dimensional subset in $\Prob(\Xi, \mu_0)$. The precise definition of a statistical model may vary depending on additional technical assumptions. In this paper, we adopt the following definition.

\begin{defn}[cf. {\cite[Definition 3.4]{ay2017informationbook}}] \label{def: parametrized measure model}
	\leavevmode
	\begin{enumerate}
		\item A {\it parametrized measure model} is a quadruple $(M, \Xi, \mu_0, \Phi)$ consisting of a smooth manifold $M$, a measurable space $\Xi$, a $\sigma$-finite non-negative measure $\mu_0$ and a  Fr\'echet $C^1$-smooth map
		$$ \Phi: M \rightarrow  \mathcal{M}(\Xi, \mu_0) \subset \mathcal{S}(\Xi, \mu_0).$$
		\item A parametrized measure model $(M, \Xi, \mu_0, \Phi)$ is called a {\it statistical model} if $\Phi(M) \subset \Prob(\Xi, \mu_0)$. The model $(M, \Xi, \mu_0, \Phi)$ is also represented by $\Phi: M \rightarrow \Prob(\Xi, \mu_0)$.
		\item We say that a parametrized measure model $(M, \Xi, \mu_0, \Phi)$ has a {\it regular density function} if for the {\it density function} $P : M \times \Xi \rightarrow \RR$ given by 
		$$  P(x, \xi) := \frac{d \{\Phi(x)\}}{d\mu_0} \in L^1(\Xi, \mu_0),  $$
		the partial derivative $\partial_V P(x, \cdot)$ exists and lies in $L^1(\Xi, \mu_0)$ for all $V \in T_xM$. 
	\end{enumerate}
\end{defn}
We call $M$ a {\it parameter space} and $\Xi$ a {\it sample space}.\\

For a parametrized measure model $(M, \Xi, \mu_0, \Phi)$, denote $D \Phi(x)$ be the Fr\'echet derivative of $\Phi(x)$. Then for a tangent vector $V \in T_xM$, the {\it logarithmic derivative} of $\Phi$ at $x \in M$ is defined by 
\begin{align} \label{equ: dlog Phi}
	\partial_V \log \Phi(x) := \frac{d \{ D \Phi(V) \} }{d \Phi(x)} \in L^1(\Xi, \Phi(x)).
\end{align}
If a parametrized measure model has a positive regular density function $P$, then
$$ \partial_V \log \Phi(x) = \partial_V \log P(x, \cdot),
$$
which is independent of the choice of the (reference) measure $\mu_0$.
In this paper, by slight abuse of notation, we denote $\partial_V \log \Phi$ by $\partial_V \log P$, even for a model without a positive regular density function.

\begin{defn}[cf. {\cite[Definition 2.4]{ay2015information}}, {\cite[Theorem 3.2]{ay2017informationbook}}] \label{def: k-integrable}
	A parametrized measure model $(M, \Xi, \mu_0, \Phi)$ is called {\it $k$-integrable} ($k \ge 1$) if the map
	$$ V \longmapsto \norm{\partial_{V} \log P(x,\cdot)}_{L^k(\Xi, \Phi(x))} $$
	defined on $TM$ is continuous.
\end{defn}

\begin{rmk} \label{rmk: integral dP = 0}
	Every parametrized measure model is $1$-integrable by definition. Moreover, $k$-integrability implies $m$-integrability for $1 \le m < k$.
    The role for $k$-integrability is the interchangeability of differentiation and integration.
	For example, every $k$-integrable statistical model satisfies
	\begin{align*}
		\int_{\Xi} \partial_V P(x,\xi)d\mu_0(\xi) = \partial_V \int_{\Xi} P(x,\xi)d\mu_0(\xi) = 0
	\end{align*}
	(see \cite[Remark 2.5]{ay2015information}). 
	In other words, $\Exp[\partial_V \log P] = \int_{\Xi} (\partial_V \log P(x,\xi)) P(x,\xi)d\mu_0(\xi) = 0$.
\end{rmk}

The {\it tangent cone} of $\Prob(\Xi) (= \Prob(\Xi, \mu_0))$ at $\mu \in \Prob(\Xi)$ is given by (\cite[Theorem 3.1]{ay2017informationbook})
$$ T_{\mu} \Prob(\Xi) = \left\{ \sigma = \phi \mu  \; \middle\vert \; \phi \in L^1(\Xi, \mu),  \; \int_{\Xi} d\sigma = 0  \right\}. $$
The canonical isomorphism $\i_{\text{can}}: \sigma \mapsto \frac{d \sigma}{d \mu}$ induced by the Radon-Nikodym derivative yields
\begin{align*}
	T_{\mu} \Prob(\Xi) \cong \left\{  \phi \in L^1(\Xi, \mu) \; \middle\vert \; 
	\Exp[\phi] :=  \int_{\Xi} \phi d\mu = 0   \right\}.
\end{align*}

\begin{defn} \label{def: Fisher information metric}
	The {\it Fisher information metric} $g_F$ at $\mu \in \Prob(\Xi)$ is defined by
	\begin{equation}\label{eq:Fisher information-metric}
		(g_F)_{\mu}(\sigma_1, \sigma_2) := \int_{\Xi} \frac{d \sigma_1}{d \mu} \frac{d \sigma_2}{d \mu} d\mu,
	\end{equation}
	for $\sigma_1, \sigma_2 \in T_{\mu} \Prob(\Xi)$,
	where $\frac{d \sigma_1}{d \mu}$ and  $\frac{d \sigma_2}{d \mu}$ are the Radon-Nikodym derivatives.
\end{defn} 

It is interesting to note that T. Friedrich (\cite{friedrich1991fisher}) proved that the sectional curvature of $g_F$ on $\Prob(\Xi)$ is $\frac{1}{4}$ whenever they are well-defined.
Various geometric properties of Fisher information metric are given in \cite{ay2015information}. \\

A statistical model $(M, \Xi, \mu_0, \Phi)$ is called {\it immersed} if the map 
$d_x \log P : T_xM \rightarrow L^1(\Xi, \Phi(x))$ given by \eqref{equ: dlog Phi} is injective for all $x \in M$.
In this case, the tangent space of $\Phi(M)$ at $\mu = \Phi(x)$ can be identified by
\begin{align*}
	T_{\mu} (\Phi(M)) 
	\cong \langle \partial_{1} \log P(x, \cdot), \cdots, \partial_{n} \log P(x, \cdot)   \rangle,
\end{align*}
where $\partial_{\alpha} = \frac{\partial}{\partial x_{\alpha}}$ $(\alpha = 1, \cdots, n)$ and $\dim_{\RR} M = n$.
Moreover, the Fisher information metric $(\Phi^* g_F)(x) = \sum_{\alpha,\beta=1}^n g_{\alpha\beta}(x) dx_{\alpha} \otimes dx_{\beta}$ is given by 
\begin{equation*}
	g_{\alpha\beta}(x) = \int_{\Xi} (\partial_{\alpha} \log P(x,\xi))(\partial_{\beta} \log P(x,\xi)) P(x, \xi) d\mu_0(\xi)
	= \Cov[\partial_{\alpha} l, \partial_{\beta} l],
\end{equation*}
where $l = l(x,\xi) := \log P(x,\xi)$.
The Fisher information metric is well-defined (i.e., finite) for any $2$-integrable statistical model.
\vspace{2mm}
	
	\subsection{Hermitian metric}
	
	Let $M$ be a complex manifold with the complex structure $J: TM \rightarrow TM$, where $TM$ is the real tangent bundle.
	Since $J^2 = - \text{id}$, the complexified tangent space $T^{\CC}_p M := \CC \otimes T_pM$ at $p \in M$ can be decomposed into two eigenspaces
	$T^{(1,0)}_pM := \{ v \in  T^{\CC}_pM \,|\, Jv = iv \}$ and $T^{(0,1)}_pM := \{ v \in T^{\CC}_pM \,|\, Jv = -iv \}$. There is an one-to-one correspondence between $T_pM$ and $T^{(1,0)}_pM$ via 
	$$v \mapsto v^{\sim} := \frac{1}{2}(v - iJv). $$
	Note that $\overline{v^{\sim}} =  \frac{1}{2}(v + iJv) \in T^{(0,1)}_pM$.
	
	A Riemannian metric $g$ on $M$ is called {\it Hermitian} if $g(v,w) = g(Jv, Jw)$ for all $v,w \in T_pM$.
	Define $\hat{g}: T^{\CC} M  \times T^{\CC} M  \rightarrow \CC$ by extending $g$ $\CC$-linearly.
	Then $g$ is Hermitian if and only if $\hat{g}(v^{\sim}, w^{\sim}) = 0$ for all $v^{\sim}, w^{\sim} \in T^{(1,0)}_pM$. 
	
	From a Riemannian Hermitian metric $g$, one can induce a complex Hermitian metric $h : T^{(1,0)}M \times T^{(0,1)}M \rightarrow \CC$ by $h(v^{\sim}, \overline{w^{\sim}}) := \hat{g}(v^{\sim}, \overline{w^{\sim}})$.
	Conversely, a complex Hermitian metric $h$ induces a Riemannian Hermitian metric $g := \text{Re} h. $
	Note that for a Riemannian metric $g$ (not necessarily Hermitian), 
	$$ \hat{g} (v^{\sim}, v^{\sim}) = \frac{1}{4} \left( g(v,v) + g(Jv, Jv) \right) .$$

\vspace{2mm}  
    
\subsection{Bergman geometry} \label{sec: Bergman geometry}

        Let $\O \subset \CC^n$ be a bounded domain and $dV$ be the Lebesgue measure on $\CC^n$.
    	Let $A^2(\O)$ be the space of all square integrable holomorphic functions on $\O$ equipped with the Hermitian inner product 
    	$\inner{f,g} =  \int_{\O} f(z) \overline{g(z)} dV(z)$. Then $A^2(\O)$ becomes a Hilbert space.
    	The {\it Bergman kernel} function on $\O \times \O$ is defined by 
    	$$ \Bergman (z,\xi) := \sum_{j= 0}^{\infty} s_{j}(z)  \overline{s_{j}(\xi)} ,$$
    	where $\{ s_{j} \}_{j=0}^{\infty}$ is an orthonormal basis for $A^2(\O)$. 
    	Note that it is independent of the choice of an orthonormal basis. 
    	By the definition, $\Bergman(z,\xi)$ is holomorphic in the first variable $z$ and anti-holomorphic in the second variable $\xi$.
    	One of the most important features is the following reproducing property, which is induced from the Riesz representation theorem.
    	\begin{prop} \label{prop: Bergman reproducing}
    		For $f \in A^2(\O)$, we have
    		$$ f(z) = \int_{\O} \Bergman(z,\xi) f(\xi) dV(\xi) $$
    		for all $z \in \O$.
    	\end{prop}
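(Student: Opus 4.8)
The plan is to deduce the reproducing property directly from the Riesz representation theorem, exactly as signposted in the text preceding the statement. The first step is to verify that for each fixed $z \in \O$ the point-evaluation functional $\mathrm{ev}_z : A^2(\O) \to \CC$, $f \mapsto f(z)$, is a bounded linear functional on the Hilbert space $A^2(\O)$. To see this I would invoke the mean value property of holomorphic functions: choosing $r>0$ so that the ball $B(z,r)$ is relatively compact in $\O$, one has $f(z) = \frac{1}{V}\int_{B(z,r)} f\, dV$ with $V = \mathrm{Vol}(B(z,r))$, and the Cauchy--Schwarz inequality then yields $|f(z)| \le V^{-1/2}\norm{f}_{L^2(B(z,r))} \le V^{-1/2}\norm{f}_{A^2(\O)}$. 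Hence $\mathrm{ev}_z$ is continuous.

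By the Riesz representation theorem there is therefore a unique element $K_z \in A^2(\O)$ with $f(z) = \inner{f, K_z} = \int_{\O} f(\xi)\,\overline{K_z(\xi)}\, dV(\xi)$ for every $f \in A^2(\O)$. It remains to identify $\overline{K_z(\cdot)}$ with the kernel $\Bergman(z,\cdot)$ given by the series. Expanding $K_z$ in the orthonormal basis $\{s_j\}$ gives $K_z = \sum_j \inner{K_z, s_j}\, s_j$, and applying the defining property of $K_z$ to $f = s_j$ yields $\inner{K_z, s_j} = \overline{\inner{s_j, K_z}} = \overline{s_j(z)}$. Consequently $\overline{K_z(\xi)} = \sum_j s_j(z)\overline{s_j(\xi)} = \Bergman(z,\xi)$, and substituting this back into the Riesz identity produces the asserted equality $f(z) = \int_{\O} \Bergman(z,\xi) f(\xi)\, dV(\xi)$.

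The one genuinely analytic point is the boundedness estimate in the first step; everything afterward is formal Hilbert-space manipulation. I would emphasize that this same estimate does double duty: evaluated on the diagonal it shows the series defining $\Bergman(z,w)$ is exactly $\norm{K_z}^2$, so it converges and, being the squared norm of the Riesz representative, is manifestly independent of the chosen orthonormal basis. Thus no separate convergence or well-definedness argument is required, and the main obstacle reduces to the clean sub-mean-value bound above.
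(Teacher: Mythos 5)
Your proof is correct and follows exactly the route the paper indicates (the paper states the proposition without proof, attributing it to the Riesz representation theorem). The sub-mean-value bound for the evaluation functional, the Riesz representative $K_z$, and its identification with $\sum_j \overline{s_j(z)} s_j$ via the orthonormal expansion constitute the standard complete argument, and your closing remark that the same estimate yields convergence and basis-independence of the kernel series is accurate.
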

    	
    	The {\it Bergman metric} $g_B$ on $\O$ is defined by
    	\begin{equation*}
    		g_B(z) := \sum_{\alpha,\beta=1}^n \frac{\partial^2 \log \Bergman(z,z)}{\partial z_{\alpha} \partial \overline{z}_{\beta} } dz_{\alpha} \otimes d\overline{z}_{\beta}
    	\end{equation*}
    	provided that $\Bergman(z,z) > 0$ for all $z \in \O$.
            From the definition, we know that $g_B$ is a K\"ahler metric.
    	On a bounded domain $\O \subset \CC^n$, it is well-known that $g_B$ is always well-defined and positive-definite. 
    
        \begin{defn} \label{def: diastasis function}
            The diastasis function for $g_B$ is given by
            \begin{equation*}
                \Dia(z,\xi) := \log \frac{\Bergman(z,z) \Bergman(\xi,\xi)}{|\Bergman(z,\xi)|^2}  .
            \end{equation*}
        \end{defn}
        \noindent
	This function plays a central role throughout the paper.

\vspace{2mm}  
\subsection{Special orthonormal basis} \label{subsec: Special orthonormal basis}

        In this subsection, we discuss the construction of a special orthonormal basis in details. 

        Let $\{ D^{k} \}_{k=1}^{\infty}$ denote a sequence of partial derivatives in lexicographic order, i.e.,
		$
            D^1 = \frac{\partial}{\partial z_1}, 
            D^2 = \frac{\partial}{\partial z_2}, \cdots, 
		D^n = \frac{\partial}{\partial z_n}, 
            D^{n+1} = \frac{\partial^2}{\partial z_1 \partial z_1},  
		D^{n+2} = \frac{\partial^2}{\partial z_1 \partial z_2}, \cdots $ and so on. 
        Define linear functionals $T^{0}_z: f \mapsto f(z)$ and
        $T^{k}_z: f \mapsto D^{k}f(z)$ for all $k \ge 1$.
        Then, Cauchy estimates for holomorphic functions imply that $T_z^k$ is bounded and $T_z^k$ belongs to the space $A^2(\O)^*$ of all bounded linear functionals. 
        Denote by $\norm{\cdot}_{L^2} := \norm{\cdot}_{L^2(\O, dV)}$.

        Now fix $z \in \O$ and donote by $H^0_z := \operatorname{Ker}(T_z^0)$.
        Since $T_z^0$ is bounded, the orthogonal complement $(H_z^0)^{\perp}$ in $A^2(\O)$ is a 1-dimensional subspace, and one can choose the unique element $s^z_0 \in A^2(\O)$ such that
        \begin{align*}
            \langle s^z_0, H^0_z \rangle = 0, \quad \norm{s^z_0}_{L^2}=1 \, \text{ and } \, s^z_0(z) >0.
        \end{align*}
        Inductively, for $k \ge 1$, let $H^k_z := \operatorname{Ker}(T_z^k|_{H^{k-1}_z})$ and choose the unique element $s^z_k \in H^{k-1}_z$ such that
        \begin{align*}
            \langle s^z_k, H^k_z \rangle = 0, \quad \norm{s^z_k}_{L^2}=1 \, \text{ and } \, D^k s^z_k(z) >0.
        \end{align*}
        Note that the boundedness of $\O$ guarantees that $T_z^k|_{H^{k-1}_z}$ is not the zero functional.
        Therefore, from the construction, one has
        \begin{align*}
            &s^z_0(z) \neq 0, \, s^z_1(z) = 0, \, s^z_2(z) = 0, \, s^z_3(z) = 0, \cdots, \\
            &D^1 s^z_1(z) \neq 0, \, D^1 s^z_2(z) = 0, \, D^1 s^z_3(z) = 0, \cdots, \\
            &D^2 s^z_2(z) \neq 0, \, D^2 s^z_3(z) = 0, \, D^2 s^z_4(z) = 0,  \cdots , \\
            &\hspace{35mm}  \vdots
        \end{align*}
        We call $\{ s^{z}_k \}_{k=0}^{\infty}$ a {\it special orthonormal basis} for $A^2(\O)$ with respect to $z \in \O$.
        By modifying the above argument, we may construct a special orthonormal basis $\{ s^{z}_k \}_{k=0}^{\infty}$ with respect to $z \in \O$ and $V \in T^{1,0}_z\O$ satisfying
        \begin{align*}
            &s^z_0(z) \neq 0, \, s^z_1(z) = 0, \, s^z_2(z) = 0, \, s^z_3(z) = 0, \cdots, \\
            &\partial_V s^z_1(z) \neq 0, \, \partial_V s^z_2(z) = 0, \, \partial_V s^z_3(z) = 0, \cdots.
        \end{align*}
        It is very useful to use a special orthonormal basis when we calculate Bergman invariants such as Bergman kernel and Bergman metric:
        \begin{align*}
            \Bergman(z,z) =  \sum_{j=0}^\infty |s^z_j(z)|^2 = |s^z_0(z)|^2
        \end{align*}
        and 
        \begin{align*}
            (g_B)_z(V, \overline{V}) 
            = &\frac{\partial_V \partial_{\overline{V}} \Bergman(z,z)}{\Bergman(z,z)} - \frac{\partial_V \Bergman(z,z) \partial_{\overline{V}} \Bergman(z,z)}{\Bergman(z,z)^2} \\
            = & \frac{|\partial_V s^z_0(z)|^2 + |\partial_V s^z_1(z)|^2  }{|s^z_0(z)|^2} - \frac{(\partial_V s^z_0(z) \overline{s^z_0(z)})(s^z_0(z) \overline{\partial_V s^z_0(z)})}{|s^z_0(z)|^4} \\
            = & \frac{|\partial_V s^z_1(z)|^2}{|s^z_0(z)|^2}.
        \end{align*}
        Here, $\partial_V s^z_1(z) := \sum_{j=1}^{n} v_j \left. \frac{\partial}{\partial \xi_j}\right|_{\xi=z} s^z_1(\xi)$ for $V = \sum_{j=1}^{n} v_j \frac{\partial}{\partial \xi_j}$. \\

        Note that a special orthonormal basis $\{ s^{z}_k \}_{k=0}^{\infty}$ varies as $z$ moves. 
        In the remaining of this subsection, we prove that $s^{z}_k$ is continuous in $z \in \O$ for all $k \ge 0$, which we will use later.  

        \begin{lem} \label{lem: T_z continuous}
            For each $k \ge 0$, the map $\O \rightarrow A^2(\O)^*$, $ z \mapsto T_z^k $, 
            is continuous, i.e.,
            $$  \lim_{z \rightarrow p} \norm{T_z^k - T_p^k}_{*} = 0 ,$$
            where $\norm{T}_* := \sup_{f \in A^2(\O)} \frac{|T(f)|}{\norm{f}_{L^2}}$ for $T \in A^2(\O)^*$.
        \end{lem}
        \begin{proof}
            For fixed $p \in \O$,
            \begin{align} \label{(3.3)}
                \norm{T_z^k - T_p^k}_{*} 
                &= \sup_{\norm{f}_{L^2}=1} |T_z^k(f) - T_p^k(f)| \\
                &= \sup_{\norm{f}_{L^2}=1} |D^k f(z) - D^k f(p)|. \nonumber
            \end{align}
            Since $D^k f(z)$ is a holomorphic function, 
            \begin{align*}
                |D^k f(z) - D^k f(p)| \le  \max_{1 \le \alpha \le n} \sup_{w \in \overline{U}_p} |\partial_{\alpha} D^k f(w)| |z-p|
            \end{align*}
            for all $z \in U_p$, where $U_p$ is a small neighborhood of $p$. Also, Cauchy estimates implies that 
            \begin{align*}
                \max_{1 \le \alpha \le n} \sup_{w \in \overline{U}_p} |\partial_{\alpha} D^k f(w)| \le C \norm{f}_{L^2(\O, dV)},
            \end{align*}
            where the constant $C>0$ does not depend on $f$. Altogether, from $(\ref{(3.3)})$,
            \begin{align*}
                \norm{T_z^k - T_p^k}_{*} \le \sup_{\norm{f}_{L^2}=1} C \norm{f}_{L^2} |z-p| = C |z-p| \rightarrow 0
            \end{align*}
            as $z \rightarrow p$.
        \end{proof}

        \begin{prop} \label{prop: s^z continuous}
            For a bounded domain $\O \subset \CC^n$,
            let $\{ s^{z}_k \}_{k=0}^{\infty}$ be a special orthonormal basis for $A^2(\O)$ with respect to $z \in \O$. Then, for each $k \ge 0$,
            \begin{enumerate}
                \item[(1)] The map $\O \rightarrow A^2(\O)$, $z \mapsto s^z_k$, is continuous, i.e.,
                $$ \lim_{z \rightarrow p} \norm{s^z_k - s^p_k}_{L^2}= 0 .$$
                \item[(2)] The map $\O \rightarrow \CC$, $z \mapsto D^l s^z_k(z)$, is continuous, i.e.,
                $$ \lim_{z \rightarrow p} |D^l s^z_k(z) - D^l s^p_k(p)|= 0 $$
                for all $l \ge 0$.
            \end{enumerate}
        \end{prop}
        \begin{proof}
            \begin{enumerate}
                \item[(1)] 
                We use the mathematical induction in $k$. Let $k=0$.
                The Riesz Representation theorem implies that there exists the unique element $h^z_0 \in A^2(\O)$ such that 
                $$ T_z^0(f) = \langle f, h^z_0 \rangle_{L^2} \, \forall f \in A^2(\O) \, \text{ with } \, \norm{T_z^0}_* = \norm{h^z_0}_{L^2} .$$
                Moreover,  
                \begin{align*}
                    \norm{h^z_0 - h^p_0}_{L^2} = \norm{T_z^0 - T_p^0}_* \rightarrow 0
                \end{align*}
                as $z \rightarrow p$ by Lemma~\ref{lem: T_z continuous}.
                On the other hands, 
                \begin{align*}
                    h_0^z = a_0^z s_0^z \, \text{ where } \overline{a_0^z} = \langle s_0^z, h^z_{0}  \rangle = s_0^z(z) > 0,
                \end{align*}
                and hence
                \begin{align*}
                    \lim_{z \rightarrow p} a_0^z = \lim_{z \rightarrow p} |a_0^z| = \lim_{z \rightarrow p} \norm{h_0^z}_{L^2} = \norm{h_0^p}_{L^2} = a_0^p.
                \end{align*}
                Altogether, 
                \begin{align*}
                    \lim_{z \rightarrow p} s_0^z = \lim_{z \rightarrow p} \left( \frac{h_0^z}{a_0^z} \right) = s_0^p.
                \end{align*}
                Now, for $N \ge 0$, suppose that $\lim_{z \rightarrow p} \norm{s^z_k - s^p_k}_{L^2}= 0$ for all $0 \le k \le N$.
                The Riesz Representation theorem implies that there exists the unique element $h^z_{N+1} \in A^2(\O)$ such that 
                $$ T_z^{N+1}(f) = \langle f, h^z_{N+1} \rangle_{L^2} \, \forall f \in A^2(\O) \, \text{ with } \, \norm{T_z^{N+1}}_* = \norm{h^z_{N+1}}_{L^2} .$$
                Moreover, 
                \begin{align} \label{33.4}
                    \norm{h^z_{N+1} - h^p_{N+1}}_{L^2} = \norm{T_z^{N+1} - T_p^{N+1}}_* \rightarrow 0
                \end{align}
                as $z \rightarrow p$ by Lemma~\ref{lem: T_z continuous}.
                On the other hands, 
                \begin{align*}
                    h^z_{N+1} = \sum_{j=0}^{N+1} a_j^z s_j^z, \, \text{ where } \overline{a_j^z} = \langle s_j^z, h^z_{N+1}  \rangle.
                \end{align*}
                Now, from the inductive hypothesis, 
                \begin{align} \label{33.5}
                    \lim_{z \rightarrow p} s_j^z = s_j^p 
                \end{align}
                and
                \begin{align} \label{33.6}
                    \lim_{z \rightarrow p} a_j^z = \langle \lim_{z \rightarrow p} h^z_{N+1}  , \lim_{z \rightarrow p} s_j^z \rangle = \langle h^p_{N+1}  ,s_j^p \rangle = a_j^p
                \end{align}
                for all $0 \le j \le N$.
                Also, 
                \begin{align*}
                    \lim_{z \rightarrow p} |a^z_{N+1}|^2 
                    = \lim_{z \rightarrow p} \left( \norm{h^z_{N+1}}^2_{L^2} - \sum_{j=0}^{N} |a_j^z|^2  \right)
                    = |a^p_{N+1}|^2.
                \end{align*}
                Since $\overline{a_{N+1}^z} = \langle s_{N+1}^z, h^z_{N+1} \rangle = D^{N+1} s_{N+1}^z(z)$ is a positive real number from the construction for $s^z_{N+1}$,
                \begin{align} \label{33.7}
                    \lim_{z \rightarrow p} a^z_{N+1} = a^p_{N+1}.
                \end{align}
                In conclusion, (\ref{33.4}), (\ref{33.5}), (\ref{33.6}) and (\ref{33.7}) yield
                \begin{align*}
                    \lim_{z \rightarrow p} s^z_{N+1} 
                    = \lim_{z \rightarrow p} \left( \frac{1}{a_{N+1}^z} h^z_{N+1} - \sum_{j=0}^{N} \frac{a_j^z}{a_{N+1}^z} s_j^z  \right)
                    = \lim_{z \rightarrow p} s^p_{N+1}, 
                \end{align*}
                which completes the proof. 
                
                \item[(2)] 
                For each $k \ge 0$ and $l \ge 0$, 
                \begin{align*}
                    |D^l s^z_k(z) - D^l s^p_k(p)| 
                    &=  |D^l s^z_k(z) - D^l s^p_k(z) + D^l s^p_k(z) - D^l s^p_k(p)| \\
                    &\le |D^l s^z_k(z) - D^l s^p_k(z)| + |D^l s^p_k(z) - D^l s^p_k(p)| \\
                    &= |T_z^l (s^z_k - s^p_k)| + |D^l s^p_k(z) - D^l s^p_k(p)| \\
                    &\le \norm{T_z^l}_{*} \norm{s^z_k - s^p_k}_{L^2} + |D^l s^p_k(z) - D^l s^p_k(p)|.
                \end{align*}
                Note that, from Lemma~\ref{lem: T_z continuous}, $\norm{T_z^l}_{*}$ is bounded for all $z \in U_p$, where $U_p$ is a small open neighborhood of $p$.
                Since 
                $$\lim_{z \rightarrow p} \norm{s^z_k - s^p_k}_{L^2}=0 \text{ by } (1), \quad  \lim_{z \rightarrow p} |D^l s^p_k(z) - D^l s^p_k(p)| = 0, $$
                one concludes that 
                $\lim_{z \rightarrow p} |D^l s^z_k(z) - D^l s^p_k(p)| = 0$.
            \end{enumerate}
        \end{proof}

\vspace{5mm}
	
	\section{Bounded domains as statistical models} \label{sec: bounded domains as statistical manifolds}

        \subsection{Bergman statistical models}
        In this subsection, we realize a bounded domain $\O \subset \CC^n$ as a statistical model $(\O, \O, dV, \Phi)$, where $dV$ is the Lebesgue measure on $\CC^n$.
	
	The {\it Poisson-Bergman kernel} $P(z, \xi)$ on $\O$, which is also called the {\it Berezin kernel}, is defined by
	$$ \Poisson(z, \xi) := \frac{|\Bergman(z,\xi)|^2}{\Bergman(z,z)}, $$
	where $\Bergman(z,\xi)$ is the Bergman kernel on $\O$.
	Although the Poisson-Bergman kernel is real-valued, in contrast to the Bergman kernel, it also satisfies the following reproducing property.
	\begin{prop} \label{prop: Poisson reproducing}
		Assume that, for each $z \in \O$, either
		\begin{equation} \label{crucial additional condition}
			\Bergman(\cdot, z) f(\cdot) \in A^2(\O) \quad \text{ or } \quad \overline{\Bergman(\cdot, z) f(\cdot)} \in A^2(\O).
		\end{equation}
		Then
		$$ f(z) = \int_{\O} \Poisson(z,\xi) f(\xi) dV(\xi) $$
		for all $z \in \O$.
	\end{prop}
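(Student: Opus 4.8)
The plan is to reduce the claimed reproducing identity for the Poisson--Bergman kernel $\Poisson$ to the ordinary Bergman reproducing property in Proposition \ref{prop: Bergman reproducing}, applied not to $f$ itself but to a twisted function. First I would unwind the definition $\Poisson(z,\xi) = |\Bergman(z,\xi)|^2 / \Bergman(z,z)$ and pull the factor $1/\Bergman(z,z)$, which is constant in $\xi$, outside the integral, so that
\begin{equation*}
    \int_{\O} \Poisson(z,\xi) f(\xi)\, dV(\xi) = \frac{1}{\Bergman(z,z)} \int_{\O} \Bergman(z,\xi)\, \overline{\Bergman(z,\xi)}\, f(\xi)\, dV(\xi).
\end{equation*}
The Hermitian symmetry of the Bergman kernel, $\overline{\Bergman(z,\xi)} = \Bergman(\xi,z)$, then lets me rewrite the antiholomorphic factor $\overline{\Bergman(z,\xi)}$ as $\Bergman(\xi,z)$, which is holomorphic in the variable $\xi$.

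The key step is to set $g(\xi) := \Bergman(\xi,z) f(\xi)$ and observe that $g$ is exactly the function $\Bergman(\cdot,z) f(\cdot)$ appearing in the hypothesis \eqref{crucial additional condition}; being a product of functions holomorphic in $\xi$, it is holomorphic, and by assumption it lies in $A^2(\O)$. Thus the inner integral becomes $\int_{\O} \Bergman(z,\xi) g(\xi)\, dV(\xi)$, to which I apply Proposition \ref{prop: Bergman reproducing} and conclude that it equals $g(z) = \Bergman(z,z) f(z)$. Substituting back, the factor $\Bergman(z,z)$ cancels and I recover $f(z)$, as desired.

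The only real obstacle is the integrability requirement needed to invoke the reproducing property: the identity $\int_{\O} \Bergman(z,\xi) g(\xi)\, dV(\xi) = g(z)$ is valid precisely because $g \in A^2(\O)$, which is guaranteed by hypothesis \eqref{crucial additional condition} rather than being automatic, since the bare product $|\Bergman(z,\xi)|^2 f(\xi)$ need not be integrable in general. I would also make explicit that the conjugate-symmetry identity $\overline{\Bergman(z,\xi)} = \Bergman(\xi,z)$ follows directly from the orthonormal-basis expansion $\Bergman(z,w) = \sum_j s_j(z)\overline{s_j(w)}$, so that the algebraic manipulation above is fully justified.
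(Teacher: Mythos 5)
Your argument is correct and is essentially the paper's own proof: both rewrite $\Poisson(z,\xi) = \Bergman(z,\xi)\Bergman(\xi,z)/\Bergman(z,z)$ and apply Proposition \ref{prop: Bergman reproducing} to the function $\Bergman(\cdot,z)f(\cdot)$, whose membership in $A^2(\O)$ is exactly what hypothesis \eqref{crucial additional condition} supplies. Your version just makes the conjugate-symmetry step and the integrability justification more explicit than the paper does.
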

	\begin{proof}
		For each $z \in \O$,
		\begin{align*}
			\int_{\O} \Poisson(z,\xi) f(\xi) dV(\xi) 
			= \int_{\O} \Bergman(z,\xi) \frac{\Bergman(\xi,z)}{\Bergman(z,z)}f(\xi) dV(\xi) 
			= f(z).
		\end{align*}
		In the second equality, Proposition~\ref{prop: Bergman reproducing} is applied.
	\end{proof}

	Now, we construct the map $\Phi : \O \rightarrow \Prob(\O, dV)$ defined by 
	\begin{equation} \label{def: map into P(D)}
		z \longmapsto \Poisson(z,\xi) dV(\xi) := \frac{|\Bergman(z,\xi)|^2}{\Bergman(z,z)} dV(\xi) .
	\end{equation}
	Note that $\int_{\O} \Poisson(z,\xi)dV(\xi) = 1$ thanks to Proposition~\ref{prop: Poisson reproducing}.
	In order for the map $\Phi$ to be well-defined, we need the following two conditions.
	\begin{enumerate} 
		\item $\Bergman(z,z) > 0$ for all $z \in \O$, 
		\item for fixed $z \in \O$, $P(z, \cdot) > 0$ $dV$-a.e. 
	\end{enumerate}
	When $\O$ is bounded, the condition $(1)$ is guaranteed, 
	and since $A_z = \{  \xi \in \O :  \Bergman(z, \xi) = 0 \}$ is a complex subvariety in $\O$, $(2)$ is also satisfied. \\

    Let $\partial_V$ denote the directional derivative in the direction $V \in T^{1,0}\O$.
    \begin{lem} \label{lem: d_v P continuous}
        For a continuous vector field $V \in T^{1,0}\O$, the map 
        \begin{align*}
            z \longmapsto \partial_V P(z, \cdot) 
        \end{align*}
        is continuous in $L^1$, 
        i.e., $\lim_{z \rightarrow p} \norm{\partial_V P(z, \cdot) - \partial_V P(p, \cdot) }_{L^1(\O, dV)} = 0$ for $p \in \O$.
    \end{lem}
    \begin{proof}
        Let $\{ s^{z}_k \}_{k=0}^{\infty}$ be a special orthonormal basis for $A^2(\O)$ with respect to $z \in \O$ and $V \in T^{1,0}_z\O$. By the direct calculation, we have
        \begin{align} \label{equ: d_v P}
            \partial_V P(z, \xi) 
            &= \frac{\partial_V s^z_1(z)}{s^z_0(z)} s^z_0(\xi) \overline{s^z_1(\xi)}.
        \end{align}
        Then, by letting $A(z) = \partial_V s^z_1(z) / s^z_0(z)$,
        \begin{align*}
            & \| A(z) s^z_0(\cdot) \overline{s^z_1(\cdot)} - A(p) s^p_0(\cdot) \overline{s^p_1(\cdot)} \|_{L^1(\O, dV)} \\
            \le & |A(z) - A(p)| \| s^p_0 \|_{L^2(\O, dV)} \| s^p_1 \|_{L^2(\O, dV)} \\
            + & |A(z)| \| s^z_0 - s^p_0 \|_{L^2(\O, dV)} \| s^p_1 \|_{L^2(\O, dV)} \\
            + & |A(z)| \| s^z_0 \|_{L^2(\O, dV)} \| s^z_1 - s^p_1 \|_{L^2(\O, dV)} .
        \end{align*}
        Therefore, 
        $$\lim_{z \rightarrow p} \norm{\partial_V P(z, \cdot) - \partial_V P(p, \cdot) }_{L^1(\O, dV)} = 0$$
        by Proposition~\ref{prop: s^z continuous}.
    \end{proof}

    \begin{thm} \label{thm: statistical model with regular}
         The quadruple $(\O, \O, dV, \Phi)$ is a statistical model with the regular density function $P(z, \xi)$ given by \eqref{def: map into P(D)}.
    \end{thm}
    \begin{proof}
        For fixed $V \in \CC^n$, by the mean value theorem, there exists $0 < \theta < 1$ depending on $t$ and $\xi$ such that
        $$\frac{P(z+tV, \xi) - P(z, \xi)}{t} = \partial_V P(z+\theta tV, \xi).$$
        Hence, from Lemma~\ref{lem: d_v P continuous},
        \begin{align*}
            &\lim_{t \to 0} \frac{\norm{P(z+tV, \cdot) - P(z, \cdot) - \partial_{tV} P(z, \cdot) }_{L^1(\O, dV)}}{t|V|} \\
            = & \lim_{t \to 0} \frac{\norm{ \partial_V P(z+\theta tV, \cdot) - \partial_V P(z, \cdot) }_{L^1(\O, dV)}}{|V|} = 0.
        \end{align*}
        Therefore, the map $\Phi: \O \rightarrow \Prob(\O)$ is Fr\'echet differentiable and its derivative is given by $(D\Phi)_z(V) = \partial_V P(z, \cdot)dV$. The $C^1$-smoothness of $\Phi$ a direct consequence of Lemma~\ref{lem: d_v P continuous}.

        From \eqref{equ: d_v P}, since $s^z_0, s^z_1 \in L^2(\O, dV)$, we know that $\partial_V P(z, \cdot) \in L^1(\O, dV)$. This implies that the statistical model $(\O, \O, dV, \Phi)$ has the regular density function $P$.
    \end{proof}

    \begin{rmk}
        Note that, for fixed $z \in \O$, the set $\{  \xi \in \O :  \Bergman(z, \xi) = 0 \}$ is a complex subvariety in $\O$ and thus has $dV$-measure zero. 
        Hence, $P(z, \cdot)$ is positive $dV$-almost everywhere. 
        This ensures that the statistical model $(\O, \O, dV, \Phi)$ defined in \eqref{def: map into P(D)} admits a positive regular density function.
    \end{rmk}

    We refer to the statistical model $(\O, \O, dV, \Phi)$ as the \emph{Bergman statistical model}.
 

    \begin{thm}[cf. \cite{BurbeaJacobRaoCRadhakrishna84}] \label{thm: pullback of Fisher is Bergman}
		The Bergman statistical model $(\O, \O, dV, \Phi)$ is $2$-integrable and
		$$  (g_B)_z (V, \overline{V}) = (\Phi^* g_F)_z (V, \overline{V}).
		$$
		for all $z \in \O$ and $V \in T^{1,0}_z\O$. In other words,
            \begin{align*}
                (g_B)_z (V, \overline{V}) 
                = \Cov \left[\partial_{V} \log P(z, \cdot) ,\,  \partial_{V} \log P(z, \cdot) \right].
            \end{align*}
	\end{thm}
	\begin{proof}
        Let $\{ s^{z}_k \}_{k=0}^{\infty}$ be a special orthonormal basis for $A^2(\O)$ with respect to $z \in \O$ and $V \in T^{1,0}_z\O$. 
        Then the direct calculation yields 
        \begin{align*}
            |\partial_{V} \log \Poisson(z,\xi)|^2
            = \frac{|\partial_{V} s^z_1(z)|^2}{|s^z_0(z)|^2} \frac{|s^z_1(\xi)|^2}{|s^z_0(\xi)|^2}
        \end{align*}
        and  
        \begin{align*}
            P(z, \xi) = |s^z_0(\xi)|^2, \quad 
            (g_B)_z(V, \overline{V}) = \frac{|\partial_{V} s^z_1(z)|^2}{|s^z_0(z)|^2}.
        \end{align*}
        Therefore,            
		\begin{align*} 
			(\Phi^* g_F)_z (V, \overline{V})
			&:= \int_{\O} |\partial_{V} \log \Poisson(z,\xi)|^2 \Poisson(z, \xi) dV(\xi)  \\
                &= (g_B)_z(V, \overline{V})  \int_{\O} |s^z_1(\xi)|^2 dV(\xi)  \\
                &= (g_B)_z(V, \overline{V}).  
		\end{align*}
        In the last equality, $ \int_{\O} |s^z_1(\xi)|^2 dV(\xi) = 1$ is used.

        Now,
        \begin{align*}
            \norm{\partial_V \log P(z, \cdot)}^2_{L^2(\O, \Phi(z))} = (g_B)_z(V, \overline{V})
        \end{align*}
        implies that the statistical model $(\O, \O, dV, \Phi)$ is $2$-integrable because the Bergman metric is continuous on $T\O$.
        Moreover, since every $2$-integrable statistical model satisfies $\Exp[\partial_V \log P(z, \cdot)] = 0$ (Remark~\ref{rmk: integral dP = 0}), 
        $$(g_B)_z (V, \overline{V}) = \Cov \left[\partial_{V} \log P(z, \cdot) ,\,  \partial_{V} \log P(z, \cdot) \right].$$
	\end{proof}

        \begin{rmk}
            The proof above is different from the proof in \cite{BurbeaJacobRaoCRadhakrishna84}, which is the following. 
            \begin{align*}
                (\Phi^* g_F)_z (V, \overline{V})
                :&= \int_{\O} (\partial_{V} \log \Poisson(z, \xi)) (\partial_{\overline{V}} \log \Poisson(z, \xi)) \Poisson(z, \xi) dV(\xi) \\
                &= \int_{\O} \partial_{V} \partial_{\overline{V}} \Poisson(z, \xi) dV(\xi) 
                - \int_{\O} (\partial_{V} \partial_{\overline{V}} \log \Poisson(z, \xi)) \Poisson(z, \xi) dV(\xi) \\
                &= \partial_{V} \partial_{\overline{V}} \log \Bergman(z, z) \int_{\O}  \Poisson(z, \xi) dV(\xi) \\
                &= \partial_{V} \partial_{\overline{V}} \log \Bergman(z, z) = (g_B)_z (V, \overline{V}).
            \end{align*}
		In the third equality, $\int_{\O} \partial_{V} \partial_{\overline{V}} \Poisson(z, \xi) dV(\xi) = 0 $ because $\int_{\O}  \Poisson(z, \xi) dV(\xi) = 1$. However, to complete the proof, we need to further show the interchangeability of differentiation and integration, i.e.,
        $$ \partial_{V} \partial_{\overline{V}} \int_{\O} \Poisson(z, \xi) dV(\xi)
        = \int_{\O} \partial_{V} \partial_{\overline{V}} \Poisson(z, \xi) dV(\xi).
        $$
        \end{rmk}
    
	\begin{rmk}
		A Riemannian metric $g$ is Hermitian if and only if $g(\partial_{\alpha}, \partial_{\beta}) = 0$ for all $1 \le \alpha, \beta \le n$, where $\partial_{\alpha} = \frac{\partial}{\partial z_{\alpha}}$ and $\partial_{\beta} = \frac{\partial}{\partial z_{\beta}}$.
		If Condition $C_4$ (Definition~\ref{def: Condition C_k}) is satisfied, then $\Phi^* g_F$ becomes a Hermitian metric by (2) in Lemma \ref{lem: 3-tensors identities}. 
		We do not know whether Condition $C_4$ always hold or not. 
	\end{rmk}

	Next, we investigate the conditions under which the map $\Phi$ is an injective immersion.
    To do this, we first introduce the Bergman-Bochner map.
        Let $\PP( A^2(\O)^*)$ be the projectivization of dual space $A^2(\O)^*$.
	Then there exists a canonical map $\iota_{\O} : \O \rightarrow \PP( A^2(\O)^*) \cong \CC\PP^{\infty} $ defined by $z \mapsto \left[ s_0(z), s_1(z), \cdots \right]$, where $\{s_{j}\}_{j=0}^{\infty}$ is an orthonormal basis for $A^2(\O)$.
	The map $\iota_{\O}$ is called the Bergman-Bochner map (\cite{Kobayashi59}), which can be regarded as a $L^2$-version of the Kodaira map. 
	Since $A^2(\O)$ is a Hilbert space, there exists an isomorphism $\varphi : A^2(\O)^* \rightarrow A^2(\O)$ given by 
	$\sum_{j=0}^{\infty} a_j \inner{ \cdot,  s_j } \mapsto \sum_{j=0}^{\infty} \overline{a}_j s_j$.
	Also, there exists a natural map $\phi$ from $\PP( A^2(\O) )$ to $\Prob(\O,dV)$ defined by 
	$s \mapsto \frac{|s|^2}{\norm{s}^2_{L^2(\O, dV)}} dV$.
	The following proposition implies that the map $\Phi$ is essentially same as the Bergman-Bochner map $\iota_{\O}$.
	
	\begin{prop} \label{prop: essentially the Bergman-Bochner map} 
	    The map $\Phi : \O \rightarrow \Prob(\O,dV)$ defined by (\ref{def: map into P(D)}) is the composition of the following maps:
		$ \Phi : \O \xrightarrow{\iota_{\O}} \PP( A^2(\O)^*) \xrightarrow[\cong]{\varphi} \PP(A^2(\O)) \xrightarrow{\phi} \Prob(\O,dV) $.
	\end{prop}
	\begin{proof}
		For $z \in \O$, 
		\begin{align*}
			(\phi \circ \varphi \circ \iota_{\O})(z) 
			&= (\phi \circ \varphi)\left( \sum_{j=0}^{\infty} s_j(z) \inner{ \cdot,  s_j } \right)
			= \phi \left( \sum_{j=0}^{\infty} \overline{s_j(z)} s_j  \right) \\
			&= \frac{1}{ \sum_{i,j=0}^{\infty} s_i(z) \overline{s_j(z)} \inner{s_j, s_i} } \sum_{i,j=0}^{\infty} \left( s_i(z) \overline{s_j(z)} \overline{s_i(\xi)} s_j(\xi) \right) dV(\xi)  \\
			&= \frac{|\Bergman(z,\xi)|^2}{\Bergman(z,z)} dV(\xi) = \Phi(z).
		\end{align*}
	\end{proof}

	\begin{prop}\label{prop:Kodaira-embedding}
		The following two properties hold.
		\begin{enumerate} 
			\item $\Phi$ is injective if and only if the Bergman-Bochner map $\iota_{\O}$ is injective.
			\item $\Phi$ is an immersion if and only if the Bergman-Bochner map $\iota_{\O}$ is an immersion.
		\end{enumerate}
	\end{prop}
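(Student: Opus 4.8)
The plan is to exploit the factorization established in Proposition~\ref{prop:Kobayashi-map}, namely $\Phi = \phi \circ \varphi \circ \iota_{\O}$. The map $\varphi : \PP(A^2(\O)^*) \to \PP(A^2(\O))$ is induced by the (conjugate-linear) Hilbert space isomorphism, hence descends to a well-defined antiholomorphic diffeomorphism of the projectivizations; in particular it is both injective and an immersion with smooth inverse, so it is harmless in the composition. Writing $\psi := \phi \circ \varphi$, the chain rule gives $d\Phi_p = d\psi \circ (d\iota_{\O})_p$, so once I show that $\psi$ (equivalently $\phi$) is both injective and an immersion, the two desired equivalences follow formally. The reverse implications are immediate: a coincidence $\iota_{\O}(p) = \iota_{\O}(q)$ or a kernel vector of $(d\iota_{\O})_p$ would propagate through $\psi$ to $\Phi$; and the forward implications hold because injectivity and injectivity of the differential are preserved under post-composition with an injective immersion. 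Thus the whole statement reduces to proving that $\phi : \PP(A^2(\O)) \to \Prob(\O)$, $[s] \mapsto \frac{1}{\norm{s}^2}|s|^2\, dV$, is an injective immersion.

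For injectivity of $\phi$, I would suppose $\phi([s_1]) = \phi([s_2])$, i.e. $|s_1|^2/\norm{s_1}^2 = |s_2|^2/\norm{s_2}^2$ almost everywhere and hence everywhere by continuity, so that $|s_1| = c\,|s_2|$ on $\O$ for some constant $c>0$. Passing to $h := s_1/s_2$, which is holomorphic off the zero set of $s_2$, one has $|h| \equiv c$ there; since this modulus is bounded, Riemann's removable singularity theorem extends $h$ holomorphically across $\{s_2 = 0\}$ (a proper analytic subvariety whose complement in the connected domain $\O$ is connected), and a holomorphic function of constant modulus on a connected domain is constant. Hence $s_1 = \lambda s_2$ with $|\lambda| = c$, so $[s_1] = [s_2]$.

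For the immersion property I would use the standard affine model of the tangent space: a real tangent vector to $\PP(A^2(\O))$ at $[s_0]$ is represented by $v \in A^2(\O)$ with $\inner{v, s_0} = 0$, and differentiating $\phi$ along $t \mapsto [s_0 + tv]$ yields, after the orthogonality annihilates the first-order term of the denominator,
\begin{equation*}
(d\phi)_{[s_0]}(v) = \frac{2\,\re\!\left(\overline{s_0}\, v\right)}{\norm{s_0}^2}\, dV.
\end{equation*}
If this vanishes, then $\re(\overline{s_0}\, v) \equiv 0$ on $\O$; setting $g := v/s_0$, holomorphic off $\{s_0 = 0\}$, this says $\re g \equiv 0$, so $g$ is a purely imaginary constant $i\lambda$ with $\lambda \in \RR$, whence $v = i\lambda\, s_0$. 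But $\inner{v, s_0} = 0$ forces $\lambda = 0$, i.e. $v = 0$, so $(d\phi)_{[s_0]}$ is injective.

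The main obstacle I anticipate is technical rather than conceptual: in both parts one must handle the zero sets of the sections and justify the holomorphic extension and constancy arguments across these analytic subvarieties (and confirm connectedness of their complements), together with the care needed to treat $\PP(A^2(\O))$ as an infinite-dimensional complex manifold so that the tangent-space identification and the interchange of differentiation with the $L^2$ structure are legitimate. Once these regularity points are secured, the rigidity of constant-modulus (respectively constant-real-part) holomorphic functions closes the argument.
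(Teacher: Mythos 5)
Your argument is correct, but it takes a genuinely different route from the paper's. You push the entire burden onto the last map in the factorization $\Phi = \phi\circ\varphi\circ\iota_{\O}$, proving that $\phi:\PP(A^2(\O))\to\Prob(\O)$ is globally an injective immersion via classical function-theoretic rigidity (a holomorphic quotient of constant modulus, resp.\ of vanishing real part, is constant on the connected complement of the common zero variety), after which both equivalences are formal. The paper instead never analyzes $\phi$ at all: for injectivity it converts the equality of the two Poisson--Bergman densities into the identity $\Dia(p,\xi)=\Dia(q,\xi)$, evaluates at $\xi=p$, and invokes the equivalence between injectivity of $\iota_{\O}$ and positivity of the diastasis (Lemma~\ref{lem: injective condition}); for the immersion property it uses Theorem~\ref{thm: pullback of Fisher is Bergman} to see that a kernel vector of $d\Phi$ would have zero Bergman length, contradicting positive-definiteness of $g_B$ (Lemma~\ref{lem: immersion condition}). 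Your approach buys a strictly stronger conclusion (injectivity and immersivity of $\phi$ on all of $\PP(A^2(\O))$, not merely along the image of $\O$) and is independent of the metric identity $\Phi^*g_F=g_B$; its cost is that you must make sense of tangent spaces and differentials on the infinite-dimensional projective space, and verify the extension/connectedness points across the zero varieties, which you correctly flag. The paper's route avoids all infinite-dimensional differential geometry because it only ever differentiates the finitely parametrized family $P(z,\xi)$ in $z$, at the price of leaning on the previously established Lemmas~\ref{lem: immersion condition} and~\ref{lem: injective condition} and Theorem~\ref{thm: pullback of Fisher is Bergman}. Both proofs are sound.
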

	
	To prove the above proposition, we need the following two lemmas based on Kobayashi (\cite[Theorem 8.1 and 8.2]{Kobayashi59}).
	
	\begin{lem} \label{lem: immersion condition}
		$\iota_{\O}$ is an immersion if and only if
		$\O$ admits the (positive-definite) Bergman metric.
	\end{lem}
	
	\begin{lem}[cf. \cite{Wang15}] \label{lem: injective condition}
		The following are equivalent.
		\begin{enumerate}
			\item $\iota_{\O}$ is injective.
			\item The diastasis function $\Dia(z,\xi) > 0$ for all $z \neq \xi \in \O$.
			\item For any $p \neq q \in \O$, there exists a function $f \in A^2(\O)$ such that $f(p) = 0$ and $f(q) \neq 0$.  
		\end{enumerate}
	\end{lem}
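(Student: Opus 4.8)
The plan is to realize the Kobayashi map through the $\ell^2$-valued vector $\mathbf{s}(z) := (s_0(z), s_1(z), \dots)$, where $\{s_j\}_{j=0}^{\infty}$ is the fixed orthonormal basis of $A^2(\O)$. Since $\O$ is bounded we have $\Bergman(z,z) = \sum_j |s_j(z)|^2 = \norm{\mathbf{s}(z)}^2 > 0$, so $\mathbf{s}(z) \neq 0$ and $\iota_{\O}(z)$ is exactly the projective class $[\mathbf{s}(z)]$; likewise $\Bergman(z,w) = \sum_j s_j(z)\overline{s_j(w)} = \inner{\mathbf{s}(z), \mathbf{s}(w)}_{\ell^2}$. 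With this dictionary the three conditions become statements about a single pair of $\ell^2$-vectors, and I would establish $(1)\Leftrightarrow(2)$ and $(1)\Leftrightarrow(3)$ separately.

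For $(1) \Leftrightarrow (2)$, I would rewrite the diastasis as $\Dia(z,w) = \log\frac{\norm{\mathbf{s}(z)}^2 \norm{\mathbf{s}(w)}^2}{|\inner{\mathbf{s}(z),\mathbf{s}(w)}|^2}$. The Cauchy--Schwarz inequality shows the fraction is $\ge 1$, hence $\Dia(z,w) \ge 0$ always, and equality holds precisely when $\mathbf{s}(z)$ and $\mathbf{s}(w)$ are linearly dependent, that is, when $[\mathbf{s}(z)]=[\mathbf{s}(w)]$, i.e. $\iota_{\O}(z)=\iota_{\O}(w)$. Thus $\Dia(z,w)>0$ for all $z\neq w$ is exactly injectivity of $\iota_{\O}$.

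For $(1) \Leftrightarrow (3)$ I would argue by contraposition, using the reproducing property (Proposition \ref{prop: Bergman reproducing}) in the form $f(z) = \inner{f, \Bergman(\cdot, z)}$, so that $f(z) = 0$ iff $f \perp \Bergman(\cdot, z)$. If $\iota_{\O}$ fails to be injective, say $\iota_{\O}(p) = \iota_{\O}(q)$ with $p\ne q$, then $\mathbf{s}(p)=\lambda\,\mathbf{s}(q)$ for some $\lambda \in \CC^{*}$, whence $\Bergman(\cdot,p)=\overline{\lambda}\,\Bergman(\cdot,q)$ and therefore $f(p)=\lambda f(q)$ for every $f \in A^2(\O)$; no such $f$ can vanish at $p$ while being nonzero at $q$, so $(3)$ fails. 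Conversely, if $(3)$ fails for some pair $p\neq q$, then the evaluation functionals satisfy $\ker(\mathrm{ev}_p)\subseteq \ker(\mathrm{ev}_q)$; since $\mathrm{ev}_p \neq 0$ (because $\Bergman(p,p)>0$), the elementary fact that a linear functional vanishing on the kernel of another must be a scalar multiple of it yields $\mathrm{ev}_q = c\,\mathrm{ev}_p$, hence $\Bergman(\cdot,q)$ is proportional to $\Bergman(\cdot,p)$, hence $\mathbf{s}(q)$ is proportional to $\mathbf{s}(p)$, i.e. $\iota_{\O}(p)=\iota_{\O}(q)$.

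I expect the Cauchy--Schwarz step to be entirely routine; the point requiring genuine care is the second direction of $(1)\Leftrightarrow(3)$, where I must pass from the kernel-inclusion of the evaluation functionals to actual proportionality of the reproducing kernels, keeping precise track of the conjugate-linearity of the Hermitian inner product. Notably none of these arguments uses finite-dimensionality, so they go through verbatim in the infinite-dimensional Hilbert space $A^2(\O)$.
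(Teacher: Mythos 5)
Your proof is correct. Note, however, that the paper does not actually prove this lemma: it is stated with a pointer to Kobayashi's Theorems 8.1--8.2 and to Wang, and is used as an imported fact in the proof of Proposition \ref{prop:Kodaira-embedding}. What you have supplied is therefore a self-contained argument where the paper defers to the literature, and it is a good one. Your dictionary $\iota_{\O}(z)=[\mathbf{s}(z)]$, $\Bergman(z,w)=\inner{\mathbf{s}(z),\mathbf{s}(w)}_{\ell^2}$ reduces $(1)\Leftrightarrow(2)$ to the equality case of Cauchy--Schwarz for nonzero vectors, and your handling of $(1)\Leftrightarrow(3)$ via the reproducing property and the kernel-inclusion criterion for proportionality of functionals is exactly the standard route (this is essentially Kobayashi's original argument, recast in Hilbert-space language); the conjugate-linearity bookkeeping ($\Bergman(\cdot,p)=\overline{\lambda}\,\Bergman(\cdot,q)$ giving $f(p)=\lambda f(q)$) is handled correctly. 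The only point worth flagging is the degenerate case $\Bergman(z,w)=0$: there the quotient defining $\Dia(z,w)$ is $+\infty$ rather than a finite number $\ge 1$, so you should say explicitly that $\Dia(z,w)$ is then interpreted as $+\infty>0$ and that Cauchy--Schwarz equality is impossible for two nonzero vectors with vanishing inner product; with that remark the equivalence $(1)\Leftrightarrow(2)$ is airtight. Your observation that nothing uses finite-dimensionality is also apt, since $A^2(\O)$ is genuinely infinite-dimensional here.
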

	
	\begin{proof}[Proof of Proposition~\ref{prop:Kodaira-embedding}]
		If $\Phi = \phi \circ \varphi \circ \iota_{\O}$ is an injective immersion, then obviously $\iota_{\O}$ is also an injective immersion. 
		
		Assume that $\iota_{\O}$ is injective and $\Phi(p) = \Phi(q)$ for some $p,q \in \O$. Then
		\begin{align*}
			\frac{|\Bergman(p,\xi)|^2}{\Bergman(p,p)} &= \frac{|\Bergman(q,\xi)|^2}{\Bergman(q,q)} &\text{ for all } \xi \in \O \\
			\Rightarrow \quad \frac{\Bergman(\xi,\xi)}{e^{\Dia(p,\xi)}} &= \frac{\Bergman(\xi,\xi)}{e^{\Dia(q,\xi)}} &\text{ for all } \xi \in \O \\
			\Rightarrow \quad \Dia(p,\xi) &= \Dia(q, \xi) &\text{ for all } \xi \in \O
		\end{align*}
		Plugging $\xi=p$ yields $0 = \Dia(p,p) = \Dia(q,p)$, which implies $p=q$ by the assumption and Lemma~\ref{lem: injective condition}.
		Hence, $\Phi$ is injective.
		
		Suppose that $\iota_{\O}$ is an immersion.
        For the sake of contradiction, assume that $\ker(d\Phi) \neq 0$. Then, for a non-zero vector $V \in \ker(d\Phi)$, 
		\begin{align*}
			g_B(V, \overline{V}) = \Phi^* g_F (V, \overline{V}) = g_F( d\Phi(V), d\Phi(\overline{V})) = 0
		\end{align*}
		by Theorem~\ref{thm: pullback of Fisher is Bergman}. This is a contradiction that the Bergman metric is positive-definite by Lemma~\ref{lem: immersion condition}.
		Hence, $\ker(d\Phi) =0$.
	\end{proof}

    \begin{thm} \label{thm: Phi is an injective immersion}
        For a bounded domain $\O \subset \CC^n$, the map $\Phi$ defined by \eqref{def: map into P(D)} is an injective immersion.
    \end{thm}
    \begin{proof}
        For a bounded domain $\O \subset \mathbb{C}^n$, the Bergman-Bochner map $\iota_{\O}$ is an injective immersion, as the Bergman metric is positive-definite (Lemma~\ref{lem: immersion condition}) and $\O$ satisfies condition $(3)$ in Lemma~\ref{lem: injective condition}. Consequently, our map $\Phi$ is also an injective immersion by Proposition~\ref{prop:Kodaira-embedding}.
    \end{proof}

\vspace{2mm}

    \subsection{Condition ${\bf C_k}$}
    In the next subsection, we introduce a new statistical curvature formula for the Bergman metric. To derive this formula, we primarily utilize the reproducing property (Proposition~\ref{prop: Poisson reproducing}) and the interchangeability of differentiation and integration (\ref{prop: interchangeability of differentiation and integration}).
    To this end, we require a certain technical condition, referred to as Condition $C_k$.
    It is designed to guarantee the finiteness and continuity of all moments of the derivatives of $l = l(z,\xi) = \log P(z, \xi)$ up to total order $k$.

   \begin{defn} \label{def: Condition C_k}
   	A bounded domain $\O \subset \CC^n$ is said to satisfy {\it Condition $C_k$} ($k \in \NN$) if, for each $z \in \O$, 
   	$$\prod_{i=1}^A (\partial^{\Gamma_i} l) \prod_{j=1}^B (\overline{\partial}^{\Lambda_j} l) \in L^1(\O, \Phi(z))$$
   	and the map $\O \rightarrow \RR$ defined by
   	$$ z \mapsto \norm{\prod_{i=1}^A (\partial^{\Gamma_i} l) \prod_{j=1}^B (\overline{\partial}^{\Lambda_j} l)}_{L^1(\O, \Phi(z))} $$
   	is continuous for all multi-indices $\Gamma_1, \cdots \Gamma_A$, $\Lambda_1, \cdots, \Lambda_B \in \NN^n_0$ with $\sum_{i} |\Gamma_i| + \sum_{j} |\Lambda_j| \le k$.
   \end{defn}
   For example, $\O$ is said to satisfy Condition $C_2$ if, for all $1 \le \alpha, \beta \le n$,
   \begin{align*}
   	&\norm{\partial_{\alpha} l}_{L^1(\O, \Phi(z))}, 
   	&\norm{(\partial_{\alpha} l)(\partial_{\beta} l)}_{L^1(\O, \Phi(z))}, \\
   	&\norm{\partial_{\alpha} \partial_{\beta} l}_{L^1(\O, \Phi(z))}, 
   	&\norm{\partial_{\alpha} \partial_{\overline{\beta}} l}_{L^1(\O, \Phi(z))}
   \end{align*}
   are finite and continuous in $z \in \O$. 
   Although, in general, $k$-integrable statistical models (Definition~\ref{def: k-integrable}) do not satisfy Condition $C_k$ for $k \ge 2$, the Bergman statistical models satisfy Condition $C_2$ as follows.

    \begin{prop}
    	A bounded domain $\O \subset \CC^n$ satisfies Condition $C_2$.
    \end{prop}
    \begin{proof}
    	Since the Bergman statistical model is $2$-integrable (Theorem~\ref{thm: pullback of Fisher is Bergman}) and  
    	\begin{align*}
    		\norm{\partial_{\alpha} \partial_{\overline{\beta}} l}_{L^1(\O, \Phi(z))} 
    		= |\partial_{\alpha} \partial_{\overline{\beta}} \log \Bergman(z,z)| \int_{\O} P(z,\xi)dV(\xi) 
    		= |\partial_{\alpha} \partial_{\overline{\beta}} \log \Bergman(z,z)|,
    	\end{align*}
    	it remains to prove that $\norm{\partial_{\alpha} \partial_{\beta} l}_{L^1(\O, \Phi(z))}$ is finite and continuous in $z \in \O$. We prove it when $\alpha=\beta$ for the simplicity.
    	
    	For $z \in \O$, let $\{ s_j^z \}_{j=0}^{\infty}$ be a special orthonormal basis (Section~\ref{subsec: Special orthonormal basis}) for $A^2(\O)$ with respect to $z$ satisfying that
    	$ s_0^z(z) \neq 0$, $s_j^z(z) = 0$ for all $j \ge 1$,
    	$ \partial_{\alpha} s_1^z(z) \neq 0$, $\partial_{\alpha} s_j^z(z) = 0$ for all $j \ge 2$, and
    	$ \partial_{\alpha} \partial_{\alpha} s_2^z(z) \neq 0$, $\partial_{\alpha} \partial_{\alpha} s_j^z(z) = 0$ for all $j \ge 3$.
    	Then
    	\begin{align*}
    		\partial_{\alpha} \partial_{\alpha} l (z, \xi) 
    		&= \left( \frac{\partial_{\alpha} \partial_{\alpha} s_1^z(z)}{s_0^z(z)} - 2\frac{\partial_{\alpha} s_0^z(z) \partial_{\alpha} s_1^z(z)}{s_0^z(z)^2} \right) \frac{\overline{s_1^z(\xi)}}{\overline{s_0^z(\xi)}} + \frac{\partial_{\alpha} \partial_{\alpha} s_2^z(z)}{s_0^z(z)}\frac{\overline{s_2^z(\xi)}}{\overline{s_0^z(\xi)}}
    		- \frac{\partial_{\alpha} s_1^z(z)^2}{s_0^z(z)^2} \frac{\overline{s_1^z(\xi)}^2}{\overline{s_0^z(\xi)}^2} \\
    		&=: A(z) \frac{\overline{s_1^z(\xi)}}{\overline{s_0^z(\xi)}} 
    		+ B(z) \frac{\overline{s_2^z(\xi)}}{\overline{s_0^z(\xi)}}
    		- C(z) \frac{\overline{s_1^z(\xi)}^2}{\overline{s_0^z(\xi)}^2},
    	\end{align*}
    	and, since $P(z,\xi) = |s_0^z(\xi)|^2$,  
    	\begin{align} \label{33.8}
    		&|\partial_{\alpha} \partial_{\alpha} l| P(z,\xi) \\
    		\le& |A(z)|  |s_0^z(\xi)||s_1^z(\xi)|
    		+ |B(z)|  |s_0^z(\xi)||s_2^z(\xi)| 
    		+ |C(z)|  |s_1^z(\xi)|^2 =: F(z, \xi). \nonumber
    	\end{align}
    	Thus, 
    	\begin{align*}
    		&\norm{\partial_{\alpha} \partial_{\alpha} l}_{L^1(\O, \Phi(z))} \\
    		\le& |A(z)| \int_{\O} |s_0^z(\xi)||s_1^z(\xi)|dV(\xi) 
    		+ |B(z)| \int_{\O} |s_0^z(\xi)||s_2^z(\xi)|dV(\xi) 
    		+ |C(z)| \int_{\O} |s_1^z(\xi)|^2dV(\xi) \\
    		\le&  |A(z)| + |B(z)| + |C(z)| 
    		< \infty.
    	\end{align*}
    	
    	To prove the continuity, 
    	\begin{align*}
    		\norm{|A(z)||s_0^z||s_1^z| - |A(p)||s_0^p||s_1^p|}_{L^1(\O, dV)} 
    		\le& |A(z)| \norm{s_0^z}_{L^2(\O,dV)} \norm{s_1^z - s_1^p}_{L^2(\O,dV)} \\
    		+& |A(z)| \norm{s_0^z - s_0^p}_{L^2(\O,dV)} \norm{s_1^p}_{L^2(\O,dV)} \\
    		+& |A(z)-A(p)| \norm{s_0^p}_{L^2(\O,dV)} \norm{s_1^p}_{L^2(\O,dV)} \\ 
    		&\longrightarrow 0
    	\end{align*}
    	as $z \rightarrow p$ by Proposition~\ref{prop: s^z continuous}. 
    	By applying the same argument to $|B(z)||s_0^z(\xi)||s_2^z(\xi)|$ and $|C(z)|  |s_1^z(\xi)|^2$, we have
    	\begin{align*}
    		\lim_{z \rightarrow p} \int_{\O} F(z,\xi) dV(\xi)
    		=  \int_{\O} F(p,\xi) dV(\xi).
    	\end{align*}
    	In conclusion, applying General Lebesgue Dominated Convergence Theorem (\cite[Theorem 19]{royden2010real}) to (\ref{33.8}) yields
    	\[
    	\lim_{z \rightarrow p} \norm{\partial_{\alpha} \partial_{\alpha} l(z, \cdot)}_{L^1(\O, \Phi(z))} 
    	= \norm{\partial_{\alpha} \partial_{\alpha} l(p, \cdot)}_{L^1(\O, \Phi(p))}.
    	\]
    \end{proof}

        It is an interesting question in itself that a bounded domain $\O$ satisfies Condition $C_k$ for all $k \ge 2$.
        When $\O$ is a bounded Hermitian symmetric domain, we have the following positive answer.
        
        \begin{prop} \label{prop: symmetric domain satisfies Condition C_k}
            A bounded Hermitian symmetric domain $\O \subset \CC^n$ satisfies Condition $C_k$ for all $k \in \NN$.
        \end{prop}
        \begin{proof}
            Since the Bergman kernels $\Bergman(z,\xi)$ of irreducible bounded Hermitian symmetric domains $\O$ are explicitly well-known (cf. \cite[Lemma 3.1]{seo2022weakly}), we know that, for fixed $z\in \O$, $\Bergman(z, \cdot)$ is zero-free and continuously extends to the closure $\overline{\O}$ of the domain. Hence, all the integrands are bounded functions on $\overline{\O}$, which implies Condition $C_k$.
            For reducible bounded Hermitian symmetric domains, it follows from the fact that the Bergman kernel $\Bergman_{\O}$ of a product domain $\O = \O_1 \times \O_2$ is $\Bergman_{\O} =  \Bergman_{\O_1} \cdot \Bergman_{\O_2}$. 
        \end{proof}

        The following proposition shows that Condition $C_k$ implies the interchangeability of differentiation and integration.

        \begin{prop} \label{prop: interchangeability of differentiation and integration}
            Suppose that a bounded domain $\O \subset \CC^n$ satisfies Condition $C_{k+1} (k\ge 0)$. Then 
            \begin{align*}
                &\partial_{\alpha}  \int_{\O} \prod_{i=1}^A (\partial^{\Gamma_i} l) \prod_{j=1}^B (\overline{\partial}^{\Lambda_j} l) P(z,\xi)dV(\xi) \\
                = &  \int_{\O} \partial_{\alpha} \left[\prod_{i=1}^A (\partial^{\Gamma_i} l) \prod_{j=1}^B (\overline{\partial}^{\Lambda_j} l) P(z,\xi)  \right] dV(\xi)
            \end{align*}
            for $\alpha = 1, \cdots, n$, where $\Gamma_1, \cdots \Gamma_A$, $\Lambda_1, \cdots, \Lambda_B \in \NN^n_0$ are multi-indices such that $\sum_{i} |\Gamma_i| + \sum_{j} |\Lambda_j| \le k$.
        \end{prop}
\begin{proof}
	Let
	\[
	F(z,\xi) := \prod_{i=1}^A (\partial^{\Gamma_i} l)\prod_{j=1}^B (\overline{\partial}^{\Lambda_j} l)\, P(z,\xi).
	\]
	Consider the following two maps
	\[
    \Psi:\O\to L^1(\O,dV), 
	\qquad
	I: L^1(\O,dV)\to \CC
	\]
	defined by
	\[
    \Psi(z):=F(z,\cdot),
	\qquad
	I(f):=\int_{\O} f(\xi)\,dV(\xi).
	\]
	Note that, from the assumption that \(\O\) satisfies Condition \(C_{k+1}\), we have \(\Psi(z)\in L^1(\O,dV)\).
	Following the same reasoning as in the proof of Theorem~\ref{thm: statistical model with regular} under Condition $C_{k+1}$, the map $\Psi$ is Fr\'echet differentiable with its derivative given by 
    $$(D\Psi)_z(V) = \partial_V F(z, \cdot)$$ 
    for $V \in T_z\O$.
	Furthermore, since \(I\) is a bounded linear operator, it is Fr\'echet differentiable and its derivative at any point is simply the operator itself, i.e.
	\[
	DI = I.
	\]
	Consequently, by the chain rule (\cite[XIII, Section 3]{lang2012real}),
	\begin{align*}
        \partial_{\alpha} (I \circ \Psi)(z) 
        = (DI \circ D\Psi)(\partial_{\alpha})
        = I( \partial_{\alpha} F(z, \cdot) )
        = \int_{\O} \partial_{\alpha} F(z, \xi) dV(\xi),
    \end{align*}
	which completes the proof.
\end{proof}

\vspace{2mm}  
        \subsection{Statistical curvature formula}

        In this subsection, we derive a new statistical curvature formula for the Bergman metric. We frequently apply Proposition~\ref{prop: Poisson reproducing} and Proposition~\ref{prop: interchangeability of differentiation and integration} without explicitly mentioning them.
        The expectation $\Exp[f]$ is defined by 
        \begin{align*}
            \Exp[f] := \int_{\O} f(\xi) P(z,\xi)dV(\xi),
        \end{align*}
        where $P(z, \xi) = \frac{|\Bergman(z,\xi)|^2}{\Bergman(z,z)}$.
        Let $g_B =  g_{\alpha \overline{\beta}} dz_{\alpha} \otimes d\overline{z}_{\beta}$ be the Bergman metric. 
        Recall that $l = l(z,\xi) := \log P(z,\xi)$ and 
        $\partial_{\alpha} := \frac{\partial}{\partial z_{\alpha}}$, $\partial_{\overline{\beta}} := \frac{\partial}{\partial \overline{z}_{\beta}}$.
        
	\begin{lem} \label{lem: 3-tensors identities}
		For $1 \le \alpha, \beta, \gamma \le n$, under additional assumptions, the following identities hold.
		\begin{enumerate}
			\item $\Exp[(\partial_{\alpha} l)(\partial_{\overline{\beta}} l)] = 
			- \Exp[(\partial_{\alpha} \partial_{\overline{\beta}} l)] = g_{\alpha\overline{\beta}}$.
			\item Under Condition $C_4$, $\Exp[(\partial_{\alpha} l)(\partial_{\beta} l)] = \Exp[(\partial_{\overline{\alpha}} l)(\partial_{\overline{\beta}} l)] =  0$.
			\item Under Condition $C_4$, $\Exp[(\partial_{\alpha}\partial_{\beta} l)] = \Exp[(\partial_{\overline{\alpha}}\partial_{\overline{\beta}} l)] = 0$.
			\item Under Condition {$C_6$}, $\Exp[(\partial_{\alpha} l)(\partial_{\beta} l)(\partial_{\gamma} l)] = \Exp[(\partial_{\overline{\alpha}} l)(\partial_{\overline{\beta}} l)(\partial_{\overline{\gamma}} l)] =  0$.
			\item Under Condition {$C_6$}, $\Exp[(\partial_{\alpha}\partial_{\beta} l)(\partial_{\gamma} l)] = \Exp[(\partial_{\overline{\alpha}} \partial_{\overline{\beta}} l)(\partial_{\overline{\gamma}} l)] = 0$.
			\item $\Exp[(\partial_{\alpha}\partial_{\overline{\beta}} l)(\partial_{\gamma} l)] = \Exp[(\partial_{\alpha}\partial_{\overline{\beta}} l)(\partial_{\overline{\gamma}} l)] = 0$.
			\item Under Condition $C_4$, $\Exp[(\partial_{\alpha} l)(\partial_{\beta} l)(\partial_{\overline{\gamma}} l)] = \Exp[(\partial_{\overline{\alpha}} l)(\partial_{\overline{\beta}} l)(\partial_{\gamma} l)] = 0$.
			\item Under Condition $C_4$, $\Exp[(\partial_{\alpha}\partial_{\beta} l)(\partial_{\overline{\gamma}} l)] = \partial_{\beta} g_{\alpha \overline{\gamma}}$.
			\item Under Condition $C_6$, $\Exp[(\partial_{\alpha}\partial_{\beta}\partial_{\gamma} l)] 
			= \Exp[(\partial_{\overline{\alpha}}\partial_{\overline{\beta}}\partial_{\overline{\gamma}} l)] = 0$.
			\item Under Condition $C_4$, $\Exp[(\partial_{\alpha}\partial_{\beta}\partial_{\overline{\gamma}} l)] 
			= - \partial_{\beta} g_{\alpha \overline{\gamma}}$.
		\end{enumerate}
	\end{lem}
	\begin{proof}
		\begin{enumerate}
			\item 
                It follows from Theorem \ref{thm: pullback of Fisher is Bergman}, and              $\Exp[(\partial_{\alpha} \partial_{\overline{\beta}} l)] = -g_{\alpha\overline{\beta}}\Exp[1] = -g_{\alpha\overline{\beta}}$.
			\item 
                Since $\partial_{\alpha} l(z,\xi)\partial_{\beta} l(z,\xi)\Bergman(z,\xi)$ is anti-holomorphic in $\xi$, by applying Proposition~\ref{prop: Poisson reproducing},
			\begin{align*}
				&\Exp[(\partial_{\alpha} l)(\partial_{\beta} l)] \\
				=& \int_{\O}  (\partial_{\alpha} \log\Bergman(z,\xi) - \partial_{\alpha} \log\Bergman(z,z)) (\partial_{\beta} \log\Bergman(z,\xi) - \partial_{\beta} \log\Bergman(z,z)) P(z,\xi) dV(\xi) \\
				=& (\partial_{\alpha} \log\Bergman(z,z) - \partial_{\alpha} \log\Bergman(z,z)) (\partial_{\beta} \log\Bergman(z,z) - \partial_{\beta} \log\Bergman(z,z)) = 0.
			\end{align*}
			\item 
                Since $\Exp[\partial_{\alpha} l] = 0$ (Remark~\ref{rmk: integral dP = 0}), 
                \begin{align*}
                    0 = \partial_{\beta} \Exp[\partial_{\alpha} l] 
                    = \Exp[\partial_{\alpha} \partial_{\beta} l] +  \Exp[(\partial_{\alpha} l)(\partial_{\beta} l)].
                \end{align*}
                Hence, $\Exp[\partial_{\alpha} \partial_{\beta} l] = 0$ by $(2)$.
			\item
                It follows from the same argument as (2).
                \item 
                It follows from the same argument as (2).
                \item 
                $\Exp[(\partial_{\alpha}\partial_{\overline{\beta}} l)(\partial_{\gamma} l)] 
                = -g_{\alpha \overline{\beta}} \Exp[\partial_{\gamma} l] = 0 .$
			\item 
                From $(2)$ and $(6)$,
			\begin{align*}
				0 &= \partial_{\overline{\gamma}} \Exp[(\partial_{\alpha} l)(\partial_{\beta} l)]  \\
				&= \Exp[(\partial_{\alpha}\partial_{\overline{\gamma}} l)(\partial_{\beta} l)] + \Exp[(\partial_{\alpha} l)(\partial_{\beta}\partial_{\overline{\gamma}} l)] + \Exp[(\partial_{\alpha} l)(\partial_{\beta} l)(\partial_{\overline{\gamma}} l)] \\
				&= \Exp[(\partial_{\alpha} l)(\partial_{\beta} l)(\partial_{\overline{\gamma}} l)].
			\end{align*}
			\item 
                From $(1)$, $(6)$ and $(7)$, 
			\begin{align*}
				\partial_{\beta} g_{\alpha \overline{\gamma}} &= \partial_{\beta} \Exp[(\partial_{\alpha} l)(\partial_{\overline{\gamma}} l)] \\
				&= \Exp[(\partial_{\alpha}\partial_{\beta} l)(\partial_{\overline{\gamma}} l)] + \Exp[(\partial_{\beta}\partial_{\overline{\gamma}} l)(\partial_{\alpha} l)] 
				+ \Exp[(\partial_{\alpha} l)(\partial_{\beta} l)(\partial_{\overline{\gamma}} l)] \\
				&= \Exp[(\partial_{\alpha}\partial_{\beta} l)(\partial_{\overline{\gamma}} l)].
			\end{align*}
			\item 
                From $(3)$ and $(5)$,
			\begin{align*}
				0 = \partial_{\gamma} \Exp[(\partial_{\alpha}\partial_{\beta} l)] 
				= \Exp[(\partial_{\alpha}\partial_{\beta}\partial_{\gamma} l)] 
				+ \Exp[(\partial_{\alpha}\partial_{\beta} l)(\partial_{\gamma} l)]
				= \Exp[(\partial_{\alpha}\partial_{\beta}\partial_{\gamma} l)] .
			\end{align*}
			\item 
                From $(3)$ and $(8)$, 
			\begin{align*}
				0 = \partial_{\overline{\gamma}} \Exp[(\partial_{\alpha}\partial_{\beta} l)] 
				= \Exp[(\partial_{\alpha}\partial_{\beta}\partial_{\overline{\gamma}} l)] 
				+ \Exp[(\partial_{\alpha}\partial_{\beta} l)(\partial_{\overline{\gamma}} l)]
				= \Exp[(\partial_{\alpha}\partial_{\beta}\partial_{\overline{\gamma}} l)]
				+ \partial_{\beta} g_{\alpha \overline{\gamma}}.
			\end{align*}
		\end{enumerate}
	\end{proof}

	\begin{lem} \label{lem: 4-tensors identities}
		For $1 \le \alpha, \beta, \gamma, \delta \le n$, under additional assumptions, the following identities hold.
		\begin{enumerate}
			\item 
                $\Exp[(\partial_{\alpha}\partial_{\beta} \partial_{\overline{\gamma}} l)(\partial_{\overline{\delta}} l)] = \Exp[(\partial_{\alpha}\partial_{\overline{\beta}} \partial_{\overline{\gamma}} l)(\partial_{\delta} l)] = 0,   $
			\item 
                $\Exp[(\partial_{\alpha}\partial_{\overline{\beta}} l)(\partial_{\gamma} l)(\partial_{\overline{\delta}} l)]  = - g_{\alpha\overline{\beta}} g_{\gamma\overline{\delta}},     $
			\item 
                $\Exp[(\partial_{\alpha}\partial_{\overline{\beta}} l)(\partial_{\gamma}\partial_{\overline{\delta}} l)] = g_{\alpha\overline{\beta}} g_{\gamma\overline{\delta}},   $
			\item 
                Under Condition $C_4$, \\
                $ \Exp[(\partial_{\alpha} l)(\partial_{\beta} l)(\partial_{\overline{\gamma}} \partial_{\overline{\delta}} l)] + \Exp[(\partial_{\alpha} l)(\partial_{\beta} l)(\partial_{\overline{\gamma}} l)(\partial_{\overline{\delta}} l)] 
			=  g_{\alpha \overline{\delta}} g_{\beta \overline{\gamma}} + g_{\beta \overline{\delta}} g_{\alpha \overline{\gamma}},  $
			\item 
                Under Condition $C_4$, \\
                $ \partial_{\beta} \partial_{\overline{\delta}} g_{\alpha \overline{\gamma}} 
			= \Exp[(\partial_{\alpha}\partial_{\beta} l)(\partial_{\overline{\gamma}}\partial_{\overline{\delta}} l)] 
			+ \Exp[(\partial_{\alpha}\partial_{\beta} l)(\partial_{\overline{\gamma}} l)(\partial_{\overline{\delta}} l)]. $	
		\end{enumerate}
	\end{lem}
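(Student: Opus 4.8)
The plan is to establish the six identities in the stated order, so that each may draw on its predecessors together with Lemma~\ref{lem: 3-tensors identities}. Two structural facts will drive every computation. First, writing $l = \log\Poisson = \log\Bergman(z,\xi) + \overline{\log\Bergman(z,\xi)} - \log\Bergman(z,z)$, one sees that $\partial_{\alpha} l$ is anti-holomorphic in $\xi$, that $\partial_{\overline{\beta}} l$ is holomorphic in $\xi$, and crucially that \emph{any} $z$-derivative monomial applied to $l$ that carries at least one holomorphic and one anti-holomorphic derivative is independent of $\xi$: such a monomial annihilates $\log\Bergman(z,\xi)$ (holomorphic in $z$) and $\overline{\log\Bergman(z,\xi)}$ (anti-holomorphic in $z$), leaving only a $\xi$-independent derivative of $-\log\Bergman(z,z)$. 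In particular $\partial_{\alpha}\partial_{\overline{\beta}} l = -g_{\alpha\overline{\beta}}$ and $\partial_{\alpha}\partial_{\beta}\partial_{\overline{\gamma}} l$ factor out of every expectation. Second, I will repeatedly use the differentiation rule already employed in Lemma~\ref{lem: 3-tensors identities}, namely $\partial_{\overline{\delta}} E[G] = E[\partial_{\overline{\delta}} G] + E[G\,\partial_{\overline{\delta}} l]$, legitimate under assumption (\ref{important assumption 3}) since the relevant integrands are then $L^2$ and the order of differentiation and integration may be interchanged.

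Identities (1), (2), (3) are immediate. For (1), the factor $\partial_{\alpha}\partial_{\beta}\partial_{\overline{\gamma}} l$ (resp. $\partial_{\alpha}\partial_{\overline{\beta}}\partial_{\overline{\gamma}} l$) is $\xi$-independent, so it leaves the expectation and what remains is $E[\partial_{\overline{\delta}} l] = 0$ (resp. $E[\partial_{\delta} l] = 0$). For (2) and (3) I pull the $\xi$-independent factors $\partial_{\alpha}\partial_{\overline{\beta}} l = -g_{\alpha\overline{\beta}}$ and $\partial_{\gamma}\partial_{\overline{\delta}} l = -g_{\gamma\overline{\delta}}$ out of the expectation; then (2) reduces to $-g_{\alpha\overline{\beta}}\,E[(\partial_{\gamma} l)(\partial_{\overline{\delta}} l)] = -g_{\alpha\overline{\beta}}g_{\gamma\overline{\delta}}$ by Lemma~\ref{lem: 3-tensors identities}(1), and (3) to $(-g_{\alpha\overline{\beta}})(-g_{\gamma\overline{\delta}}) = g_{\alpha\overline{\beta}}g_{\gamma\overline{\delta}}$ (the trailing $m$ in the statement being a typo).

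Identities (4) and (5) are obtained by differentiating known third-order identities. For (4) I differentiate $E[(\partial_{\alpha} l)(\partial_{\beta} l)(\partial_{\overline{\gamma}} l)] = 0$ (Lemma~\ref{lem: 3-tensors identities}(7)) with respect to $\partial_{\overline{\delta}}$; the two terms in which $\partial_{\overline{\delta}}$ lands on $\partial_{\alpha} l$ or $\partial_{\beta} l$ produce the $\xi$-independent factors $\partial_{\alpha}\partial_{\overline{\delta}} l = -g_{\alpha\overline{\delta}}$ and $\partial_{\beta}\partial_{\overline{\delta}} l = -g_{\beta\overline{\delta}}$, which by Lemma~\ref{lem: 3-tensors identities}(1) contribute $-g_{\alpha\overline{\delta}}g_{\beta\overline{\gamma}} - g_{\beta\overline{\delta}}g_{\alpha\overline{\gamma}}$; moving these to the other side yields (4). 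For (5) I differentiate $E[(\partial_{\alpha}\partial_{\beta} l)(\partial_{\overline{\gamma}} l)] = \partial_{\beta} g_{\alpha\overline{\gamma}}$ (Lemma~\ref{lem: 3-tensors identities}(8)) with respect to $\partial_{\overline{\delta}}$; the resulting term $E[(\partial_{\alpha}\partial_{\beta}\partial_{\overline{\delta}} l)(\partial_{\overline{\gamma}} l)]$ vanishes by identity (1) just proved, leaving exactly the two terms on the right-hand side of (5).

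The main obstacle is identity (6), which is purely algebraic but requires a reality observation to be stated without a real-part symbol. Starting from $\partial_{\alpha}\partial_{\alpha} l = \frac{\partial_{\alpha}\partial_{\alpha}\Poisson}{\Poisson} - (\partial_{\alpha} l)^2$, so that $(\partial_{\alpha} l)^2 = \frac{\partial_{\alpha}\partial_{\alpha}\Poisson}{\Poisson} - \partial_{\alpha}\partial_{\alpha} l$, I expand $|\partial_{\alpha} l|^4 = (\partial_{\alpha} l)^2\,\overline{(\partial_{\alpha} l)^2}$ using that $\Poisson$ and $l$ are real, and collect terms into $\frac{|\partial_{\alpha}\partial_{\alpha}\Poisson|^2}{\Poisson^2} - |\partial_{\alpha}\partial_{\alpha} l|^2 - 2\,\mathrm{Re}\bigl((\partial_{\alpha} l)^2\,\partial_{\overline{\alpha}}\partial_{\overline{\alpha}} l\bigr)$; taking expectations yields (6) but with $\mathrm{Re}\,E[(\partial_{\alpha} l)^2\partial_{\overline{\alpha}}\partial_{\overline{\alpha}} l]$ in the last slot. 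The delicate point is that this real part may be dropped, i.e. $E[(\partial_{\alpha} l)^2\partial_{\overline{\alpha}}\partial_{\overline{\alpha}} l]$ is real. I would establish this by differentiating $E[(\partial_{\alpha} l)^2(\partial_{\overline{\alpha}} l)] = 0$ (Lemma~\ref{lem: 3-tensors identities}(7) with repeated indices) with respect to $\partial_{\overline{\alpha}}$: the cross term contributes $-2g_{\alpha\overline{\alpha}}^2$ via the $\xi$-independence of $\partial_{\alpha}\partial_{\overline{\alpha}} l$, giving $E[(\partial_{\alpha} l)^2\partial_{\overline{\alpha}}\partial_{\overline{\alpha}} l] = 2g_{\alpha\overline{\alpha}}^2 - E[|\partial_{\alpha} l|^4]$, which is manifestly real. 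This both removes the real-part symbol and confirms the final form of (6).
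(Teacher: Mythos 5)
Your proposal is correct and follows essentially the same route as the paper: items (1)--(3) by pulling out the $\xi$-independent mixed derivatives, items (4)--(5) by differentiating the third-order identities of Lemma~\ref{lem: 3-tensors identities} under the expectation, and item (6) by the algebraic expansion of $|\partial_{\alpha} l|^4$ together with the observation that $E[(\partial_{\alpha} l)^2 \partial_{\overline{\alpha}}\partial_{\overline{\alpha}} l]$ is real (which the paper extracts from identity (4), exactly the computation you redo with repeated indices). Your remark that the trailing $m$ in (3) is a typo is also consistent with the paper's own proof.
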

	\begin{proof}
		\begin{enumerate}
			\item 
                $\Exp[(\partial_{\alpha}\partial_{\beta} \partial_{\overline{\gamma}} l)(\partial_{\overline{\delta}} l)] 
			= -\partial_{\alpha} g_{\beta \overline{\gamma}} \Exp[(\partial_{\overline{\delta}} l)] = 0.$
			\item 
                $\Exp[(\partial_{\alpha}\partial_{\overline{\beta}} l)(\partial_{\gamma} l)(\partial_{\overline{\delta}} l)]  
			= g_{\alpha \overline{\beta}} \Exp[(\partial_{\gamma} l)(\partial_{\overline{\delta}} l)] = - g_{\alpha\overline{\beta}} g_{\gamma\overline{\delta}}.  $
			\item 
                $\Exp[(\partial_{\alpha}\partial_{\overline{\beta}} l)(\partial_{\gamma}\partial_{\overline{\delta}} l)] 
			= (-g_{\alpha\overline{\beta}}) (-g_{\gamma\overline{\delta}}) \Exp[1] = g_{\alpha\overline{\beta}} g_{\gamma\overline{\delta}}.  $
			\item 
                From $(7)$ in Lemma~\ref{lem: 3-tensors identities} and $(2)$,
			\begin{align*}
				0 &= \partial_{\overline{\delta}} \Exp[(\partial_{\alpha} l)(\partial_{\beta} l)(\partial_{\overline{\gamma}} l)] \\
				&= \Exp[(\partial_{\alpha} \partial_{\overline{\delta}} l)(\partial_{\beta} l)(\partial_{\overline{\gamma}} l)]  
				+  \Exp[(\partial_{\alpha} l)(\partial_{\beta} \partial_{\overline{\delta}} l)(\partial_{\overline{\gamma}} l)]  \\
				&+ \Exp[(\partial_{\alpha} l)(\partial_{\beta} l)(\partial_{\overline{\gamma}} \partial_{\overline{\delta}} l)] 
				+ \Exp[(\partial_{\alpha} l)(\partial_{\beta} l)(\partial_{\overline{\gamma}} l)(\partial_{\overline{\delta}} l)]  \\
				&= - g_{\alpha \overline{\delta}} g_{\beta \overline{\gamma}} - g_{\beta \overline{\delta}} g_{\alpha \overline{\gamma}}
				+ \Exp[(\partial_{\alpha} l)(\partial_{\beta} l)(\partial_{\overline{\gamma}} \partial_{\overline{\delta}} l)] 
				+ \Exp[(\partial_{\alpha} l)(\partial_{\beta} l)(\partial_{\overline{\gamma}} l)(\partial_{\overline{\delta}} l)]. 
			\end{align*}
			\item 
                From $(8)$ in Lemma~\ref{lem: 3-tensors identities} and $(1)$,
			\begin{align*}
				\partial_{\beta} \partial_{\overline{\delta}} g_{\alpha \overline{\gamma}} 
				&= \partial_{\overline{\delta}} \Exp[(\partial_{\alpha}\partial_{\beta} l)(\partial_{\overline{\gamma}} l)] \\
				&= \Exp[(\partial_{\alpha}\partial_{\beta}\partial_{\overline{\delta}} l)(\partial_{\overline{\gamma}} l)] 
				+ \Exp[(\partial_{\alpha}\partial_{\beta} l)(\partial_{\overline{\gamma}}\partial_{\overline{\delta}} l)] 
				+ \Exp[(\partial_{\alpha}\partial_{\beta} l)(\partial_{\overline{\gamma}} l)(\partial_{\overline{\delta}} l)] \\
				&= \Exp[(\partial_{\alpha}\partial_{\beta} l)(\partial_{\overline{\gamma}}\partial_{\overline{\delta}} l)] 
				+ \Exp[(\partial_{\alpha}\partial_{\beta} l)(\partial_{\overline{\gamma}} l)(\partial_{\overline{\delta}} l)].
			\end{align*}
		\end{enumerate}
	\end{proof}


        The curvature 4-tensor of the Bergman metric $g_B =  g_{\alpha \overline{\beta}} dz_{\alpha} \otimes d\overline{z}_{\beta}$ is defined by
        \begin{equation*}
            R_{\alpha \overline{\beta} \gamma \overline{\delta}}
            = - \partial_{\alpha} \partial_{\overline{\beta}} g_{\gamma \overline{\delta}}
            +  g^{\tau \overline{\epsilon}} 
            (\partial_{\alpha} g_{\gamma \overline{\epsilon}})
            (\partial_{\overline{\beta}} g_{\tau \overline{\delta}}),
        \end{equation*}
        where $g^{\alpha \overline{\epsilon}} g_{\beta \overline{\epsilon}} = \delta^{\alpha}_{\beta}$. 
        Here the Einstein summation convention is applied. 
        The covariance is defined by
        \begin{align*}
            \Cov[f, g] 
            := \Exp\left[(f - \Exp[f]) \overline{(g - \Exp[g])} \right]
            = \Exp[f \overline{g}] - \Exp[f] \overline{\Exp[g]},
        \end{align*}
        where $\Exp[f] = \int f(\xi) P(z,\xi)dV(\xi)$.

\begin{thm} \label{thm: statistical curvature formula}
	Suppose that a bounded domain $\O \subset \CC^n$ satisfies Condition $C_4$. Then
	the curvature 4-tensor $R_{\alpha \overline{\beta} \gamma \overline{\delta}}$ of the Bergman metric $g_B =  g_{\alpha \overline{\beta}} dz_{\alpha} \otimes d\overline{z}_{\beta}$ is expressed by
	\begin{align} \label{equ: curvature formula}
		R_{\alpha \overline{\beta} \gamma \overline{\delta}}
		&= g_{\alpha \overline{\beta}} g_{\gamma \overline{\delta}} 
		+ g_{\alpha \overline{\delta}} g_{\gamma \overline{\beta}}   \\
		&- \Cov\left[ \partial_{\alpha} \partial_{\gamma} l + (\partial_{\alpha} l)(\partial_{\gamma} l) -  \Gamma_{\alpha \gamma}^i (\partial_i l), \,
		\partial_{\beta} \partial_{\delta} l + (\partial_{\beta} l)(\partial_{\delta} l) - \Gamma_{\beta \delta}^j (\partial_j l) \right], \nonumber 
	\end{align}
	where $\Gamma^{i}_{\alpha \gamma} = g^{i \overline{\epsilon}}(\partial_{\alpha} g_{\gamma \overline{\epsilon}})$ is the Christoffel symbol for $g_B$.
	Moreover,
	$$N_{\alpha \gamma} := \partial_{\alpha} \partial_{\gamma} l + (\partial_{\alpha} l)(\partial_{\gamma} l) - \Gamma_{\alpha \gamma}^i (\partial_i l) \in T_{\Phi(z)} \Prob(\O)$$
	and 
	$$ N_{\alpha \gamma} \perp T_{\Phi(z)} \Phi(\O).
	$$
\end{thm}

\begin{proof}
	
	We first calculate the covariant part of the right-hand side of the equation (\ref{equ: curvature formula}):
	\begin{align*}
		\Cov\left[ \partial_{\alpha} \partial_{\gamma} l + (\partial_{\alpha} l)(\partial_{\gamma} l) - g^{i \overline{\epsilon}}(\partial_{\alpha} g_{\gamma \overline{\epsilon}}) (\partial_i l), \,
		\partial_{\beta} \partial_{\delta} l + (\partial_{\beta} l)(\partial_{\delta} l) - g^{j \overline{\tau}}(\partial_{\beta} g_{\delta \overline{\tau}}) (\partial_j l) \right]. \\
	\end{align*}
	Together with $(2)$ and $(3)$ in Lemma~\ref{lem: 3-tensors identities},
	\begin{align*}
		&1.\, 
		\Cov\left[ \partial_{\alpha} \partial_{\gamma} l, \partial_{\beta} \partial_{\delta} l \right] 
		= \Exp[ (\partial_{\alpha} \partial_{\gamma} l) (\partial_{\overline{\beta}} \partial_{\overline{\delta}} l) ],  \\
		&2.\,
		\Cov\left[ \partial_{\alpha} \partial_{\gamma} l, (\partial_{\beta} l)(\partial_{\delta} l) \right] 
		= \Exp[ (\partial_{\alpha} \partial_{\gamma} l) (\partial_{\overline{\beta}} l)(\partial_{\overline{\delta}} l) ], \\ 
		&3.\,
		\Cov\left[ \partial_{\alpha} \partial_{\gamma} l, - g^{j \overline{\tau}}(\partial_{\beta} g_{\delta \overline{\tau}}) \partial_j l \right]
		= - g^{\overline{j} \tau}(\partial_{\overline{\beta}} g_{\overline{\delta} \tau}) \Exp[ (\partial_{\alpha} \partial_{\gamma} l) (\partial_{\overline{j}} l) ] 
		= - g^{\overline{j} \tau}(\partial_{\overline{\beta}} g_{\tau \overline{\delta}} ) (\partial_{\alpha} g_{\gamma \overline{j}}) \\
		& \hspace{122mm} (8) \text{ in Lemma}~\ref{lem: 3-tensors identities}, \\
		&4.\, 
		\Cov\left[ (\partial_{\alpha} l)(\partial_{\gamma} l), \partial_{\beta} \partial_{\delta} l \right]
		= \Exp[ (\partial_{\alpha} l)(\partial_{\gamma} l) (\partial_{\overline{\beta}} \partial_{\overline{\delta}} l) ], \\ 
		&5.\,
		\Cov\left[ (\partial_{\alpha} l)(\partial_{\gamma} l), (\partial_{\beta} l)(\partial_{\delta} l) \right]
		= \Exp[ (\partial_{\alpha} l)(\partial_{\gamma} l) (\partial_{\overline{\beta}} l)(\partial_{\overline{\delta}} l) ], \\
		&6.\,
		\Cov\left[ (\partial_{\alpha} l)(\partial_{\gamma} l), - g^{j \overline{\tau}}(\partial_{\beta} g_{\delta \overline{\tau}}) \partial_j l  \right] 
		= - g^{\overline{j} \tau}(\partial_{\overline{\beta}} g_{\overline{\delta} \tau}) E [(\partial_{\alpha} l)(\partial_{\gamma} l) (\partial_{\overline{j}} l) ] = 0 \\
		& \hspace{122mm} (7) \text{ in Lemma}~\ref{lem: 3-tensors identities}, \\
		&7.\,
		\Cov\left[ - g^{i \overline{\epsilon}}(\partial_{\alpha} g_{\gamma \overline{\epsilon}}) \partial_i l, \partial_{\beta} \partial_{\delta} l \right]
		= - g^{i \overline{\epsilon}}(\partial_{\alpha} g_{\gamma \overline{\epsilon}}) \Exp[ (\partial_i l) (\partial_{\overline{\beta}} \partial_{\overline{\delta}} l) ] 
		= - g^{i \overline{\epsilon}}(\partial_{\alpha} g_{\gamma \overline{\epsilon}}) (\partial_{\overline{\beta}} g_{i \overline{\delta}}) \\
		& \hspace{122mm} (8) \text{ in Lemma}~\ref{lem: 3-tensors identities},  \\
		&8.\,
		\Cov\left[ - g^{i \overline{\epsilon}}(\partial_{\alpha} g_{\gamma \overline{\epsilon}}) \partial_i l, (\partial_{\beta} l)(\partial_{\delta} l) \right]
		= - g^{i \overline{\epsilon}}(\partial_{\alpha} g_{\gamma \overline{\epsilon}}) \Exp[ (\partial_i l) (\partial_{\overline{\beta}} l)(\partial_{\overline{\delta}} l) ] = 0 \\
		& \hspace{122mm} (7) \text{ in Lemma}~\ref{lem: 3-tensors identities},  \\
		&9.\,
		\Cov\left[ - g^{i \overline{\epsilon}}(\partial_{\alpha} g_{\gamma \overline{\epsilon}}) \partial_i l, - g^{j \overline{\tau}}(\partial_{\beta} g_{\delta \overline{\tau}}) \partial_j l \right] \\
		&\hspace{3mm} = g^{i \overline{\epsilon}}(\partial_{\alpha} g_{\gamma \overline{\epsilon}}) g^{\overline{j} \tau}(\partial_{\overline{\beta}} g_{\overline{\delta} \tau}) \Exp[ (\partial_i l)  (\partial_{\overline{j}} l) ] 
		= g^{i \overline{\epsilon}}(\partial_{\alpha} g_{\gamma \overline{\epsilon}}) 
		(\partial_{\overline{\beta}} g_{i \overline{\delta}} ) \\
		& \hspace{122mm} (1) \text{ in Lemma}~\ref{lem: 3-tensors identities}.  
	\end{align*}
	In conclusion, from $(4)$ and $(5)$ in Lemma~\ref{lem: 4-tensors identities},
	\begin{align*}
		&\Cov\left[ \partial_{\alpha} \partial_{\gamma} l + (\partial_{\alpha} l)(\partial_{\gamma} l) - g^{i \overline{\epsilon}}(\partial_{\alpha} g_{\gamma \overline{\epsilon}}) (\partial_i l), \,
		\partial_{\beta} \partial_{\delta} l + (\partial_{\beta} l)(\partial_{\delta} l) - g^{j \overline{\tau}}(\partial_{\beta} g_{\delta \overline{\tau}}) (\partial_j l) \right] \\
		=& \Exp[ (\partial_{\alpha} \partial_{\gamma} l) (\partial_{\overline{\beta} \overline{\delta}} l) ]  
		+ \Exp[ (\partial_{\alpha} \partial_{\gamma} l) (\partial_{\overline{\beta}} l)(\partial_{\overline{\delta}} l) ] 
		- g^{\overline{j} \tau}(\partial_{\overline{\beta}} g_{\tau \overline{\delta}} ) (\partial_{\alpha} g_{\gamma \overline{j}})  \\
		+& \Exp[ (\partial_{\alpha} l)(\partial_{\gamma} l) (\partial_{\overline{\beta}} \partial_{\overline{\delta}} l) ] 
		+ \Exp[ (\partial_{\alpha} l)(\partial_{\gamma} l) (\partial_{\overline{\beta}} l)(\partial_{\overline{\delta}} l) ] 
		- g^{i \overline{\epsilon}}(\partial_{\alpha} g_{\gamma \overline{\epsilon}}) (\partial_{\overline{\beta}} g_{i \overline{\delta}})
		+ g^{i \overline{\epsilon}}(\partial_{\alpha} g_{\gamma \overline{\epsilon}}) 
		(\partial_{\overline{\beta}} g_{i \overline{\delta}} )  \\
		=& \partial_{\gamma} \partial_{\overline{\delta}} g_{\alpha \overline{\beta}} - g^{\overline{j} \tau}(\partial_{\overline{\beta}} g_{\tau \overline{\delta}} ) (\partial_{\alpha} g_{\gamma \overline{j}})  
		+ g_{\alpha \overline{\beta}} g_{\gamma \overline{\delta} } + g_{\alpha \overline{\delta} } g_{\gamma \overline{\beta} } . \\
	\end{align*}
	
	Now, since
	\begin{align*}
		\Exp[N_{\alpha \gamma}] = \Exp[\partial_{\alpha} \partial_{\gamma} l] + \Exp[(\partial_{\alpha} l)(\partial_{\gamma} l)] - \Gamma_{\alpha \gamma}^i \Exp[\partial_i l] = 0
	\end{align*}
	and
	\begin{align*}
		g_F(N_{\alpha \gamma}, \partial_{\beta} l) 
		&= \Exp[(\partial_{\alpha} \partial_{\gamma} l)(\partial_{\overline{\beta}} l)] + \Exp[(\partial_{\alpha} l)(\partial_{\gamma} l)(\partial_{\overline{\beta}} l)] - \Gamma_{\alpha \gamma}^i \Exp[(\partial_i l)(\partial_{\overline{\beta}} l)] \\
		&= \partial_{\alpha} g_{\gamma \overline{\beta}}   
		- g^{i \overline{\epsilon}}(\partial_{\alpha} g_{\gamma \overline{\epsilon}}) g_{i \overline{\beta}} = 0
	\end{align*}
	for all $1 \le \alpha, \beta, \gamma \le n$, one concludes that 
    \begin{align*}
        N_{\alpha \gamma} \in T_{\Phi(z)} \Prob(\O) \quad \text{ and } \quad N_{\alpha \gamma} \perp T_{\Phi(z)} \Phi(\O).
    \end{align*}
\end{proof}

\begin{cor} \label{cor: holomorphic sectional curvature statistical formula}
	Suppose that a bounded domain $\O \subset \CC^n$ satisfies Condition $C_4$. Then
    the holomorphic sectional curvature $H(\partial_{\alpha}) := R_{\alpha \overline{\alpha} \alpha \overline{\alpha}} / (g_{\alpha \overline{\alpha}})^2$ of the Bergman metric $g_B =  g_{\alpha \overline{\beta}} dz_{\alpha} \otimes d\overline{z}_{\beta}$ is expressed by
        \begin{align*} 
            H(\partial_{\alpha})
            = 2 
            - \frac{\Var\left[ \partial_{\alpha} \partial_{\alpha} l + (\partial_{\alpha} l)(\partial_{\alpha} l) - \Gamma_{\alpha \alpha}^i (\partial_i l) \right]}{(g_{\alpha \overline{\alpha}})^2}.
        \end{align*}
\end{cor}

\begin{cor}[cf. \cite{Kobayashi59}] \label{cor: holomorphic sectional curvature upper bound}
	Suppose that a bounded domain $\O \subset \CC^n$ satisfies Condition $C_4$. Then the holomorphic sectional curvature of the Bergman metric is always less or equal to $2$.
\end{cor}

\vspace{2mm}  
        \subsection{Information--geometric structures}

        In this subsection, we calculate various important information--geometric structures for the Bergman statistical models $\Phi: \O \rightarrow \Prob(\O,dV)$, 
        such as the Amari-Chentsov tensor $T$ (Definition \ref{def: Amari-Chentsov tensor}), 
	the $\alpha$-connection $\nabla^{(\alpha)}$ (Definition \ref{def: alpha connection}), the Kullback–Leibler (KL) divergence $D^{(\pm 1)}$ (Example \ref{ex: alpha divergence}) and the Shannon entropy $H$.

\begin{thm}\label{thm: pullback of Information--geometric concepts} 
	The following properties hold.
	\begin{enumerate}
		\item 
		Under Condition {$C_6$}, 
        $$\Phi^* T = 0.$$
		\item 
		Under Condition {$C_6$}, 
        $$\Phi^* \nabla^{(\alpha)} = \nabla^{LC}$$
        for all $\alpha \in \RR$, where $\nabla^{LC}$ is the Levi-Civita connection of the Bergman metric.
		\item 
        Under the condition that $\Bergman(\cdot, z) \log  \frac{\Bergman(\cdot,z)}{\Bergman(\cdot,w)} \in A^2(\O)$,
		$$\Phi^* D^{(1)}(z,w) = \Dia(z,w) = \Phi^* D^{(-1)}(z,w)$$ 
        for all $z,w \in \Omega$, where $\Dia$ is the diastasis function (Section~\ref{sec: Bergman geometry}).
		\item 
        Under the condition that $\Bergman(\cdot, z) \log \Bergman(\cdot,z) \in A^2(\O)$,
        $$H(\Phi(z)) = - \log \Bergman(z,z)$$
        for all $z \in \Omega$. 
	\end{enumerate}
\end{thm}

\begin{proof} 
	\begin{enumerate}
		\item 
		Since
		\[
		\overline{\Exp[(\partial_{\alpha} l)(\partial_{\beta} l)(\partial_{\gamma} l)]}
		=
		\Exp[(\partial_{\overline{\alpha}} l)(\partial_{\overline{\beta}} l)(\partial_{\overline{\gamma}} l)],
		\]
		and the Amari-Chentsov tensor is symmetric, it is enough to show that, for $1 \le \alpha, \beta, \gamma \le n$, 
		\begin{align*}
			\Exp[(\partial_{\alpha} l)(\partial_{\beta} l)(\partial_{\gamma} l)] = 0
			\quad \text{and} \quad
			\Exp[(\partial_{\alpha} l)(\partial_{\beta} l)(\partial_{\overline{\gamma}} l)] = 0,
		\end{align*}
		which follow from (4) and (7) in Lemma~\ref{lem: 3-tensors identities}, respectively.
		
		\item
		It follows from (1) that the Amari-Chentsov tensor $T$ vanishes on $\Phi(\O)$.
		
		\item 
		For $z, w \in \O$,  
		\begin{align*}
			(\Phi^*D^{(-1)})(z, w) 
			:=& \int_{\O} P(z,\xi) \log \frac{P(z,\xi)}{P(w,\xi)} \, dV(\xi) \\
			=& \int_{\O} P(z,\xi) \log \frac{\Bergman(w,w)}{\Bergman(z,z)} \, dV(\xi) 
			+ \int_{\O} P(z,\xi) \log \frac{\Bergman(\xi,z)}{\Bergman(\xi,w)} \, dV(\xi) \\
			&\quad
			+ \int_{\O} P(z,\xi) \log \frac{\Bergman(z,\xi)}{\Bergman(w,\xi)} \, dV(\xi) \\
			=& \log \frac{\Bergman(w,w)}{\Bergman(z,z)}
			+ \log \frac{\Bergman(z,z)}{\Bergman(z,w)}
			+ \log \frac{\Bergman(z,z)}{\Bergman(w,z)} \\
			=& \log \frac{\Bergman(z,z)\Bergman(w,w)}{|\Bergman(z,w)|^2}
			= \Dia(z,w).
		\end{align*}
		In the third equality, we apply Proposition~\ref{prop: Poisson reproducing}. Finally,
		\[
		\Phi^* D^{(1)}(z,w)
		=
		\Phi^* D^{(-1)}(w,z)
		=
		\Dia(w,z)
		=
		\Dia(z,w),
		\]
		because the diastasis function is symmetric.
		
		\item 
		For $z \in \O$,
            \begin{align*}
			H(\Phi(z))  
			:=& -\int_{\O} P(z,\xi) \log P(z,\xi) dV(\xi) \\
			=& -\int_{\O} P(z,\xi) \log \Bergman(z,\xi) dV(\xi) 
			- \int_{\O} P(z,\xi) \log \Bergman(\xi,z)  dV(\xi) \\
                &+ \int_{\O} P(z,\xi) \log \Bergman(z,z)  dV(\xi) \\
			=& -\log \Bergman(z,z) - \log \Bergman(z,z) + \log \Bergman(z,z) \\
			=& -\log \Bergman(z,z).
		\end{align*}
	\end{enumerate}
\end{proof}

\begin{cor} \label{cor: not exponential family}
	Under Condition $C_6$, 
	$\Phi(\O)$ is never an exponential family in $\Prob(\O,dV)$.
	\begin{proof}
		If $\Phi(\Omega)$ is an exponential family, then the $1$-connection $\Phi^* \nabla^{(1)}$ must be flat. By (2) in Theorem~\ref{thm: pullback of Information--geometric concepts}, this means that the Levi-Civita connection for the Bergman metric must be flat. This is a contradiction (\cite[Theorem 7.1]{huang2023bergman}).
	\end{proof}
\end{cor}

	Although we could not calculate the pullback of $\alpha$-divergence for all $\alpha \in \RR$ on $\Phi(\O)$ except for the cases $\alpha=1, -1$,
	one shows that the $\alpha$-divergence is invariant under all biholomorphic functions of $\O$ for all $\alpha \in \RR$.

\begin{prop}\label{prop:alpha-divergence}
	For all $\alpha \in \RR$, the pullback of the $\alpha$-divergence is biholomorphic invariant, i.e., for a biholomorphism $f : \O \rightarrow \O$, $\Phi^* D^{(\alpha)}(f(z),f(w)) = \Phi^* D^{(\alpha)}(z,w)$.
\end{prop}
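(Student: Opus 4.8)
The plan is to reduce everything to the biholomorphic transformation law of the Bergman kernel together with the fact that the two exponents appearing in the $\alpha$-divergence sum to one. Recall that for a biholomorphism $f : \O \to \O$ the Bergman kernel transforms as
\begin{equation*}
	\Bergman(z,w) = J_f(z)\,\Bergman(f(z),f(w))\,\overline{J_f(w)},
\end{equation*}
where $J_f(z) := \det\left(\partial f_i / \partial z_j\right)$ is the holomorphic Jacobian determinant. Substituting this into the definition $\Poisson(z,\xi) = |\Bergman(z,\xi)|^2/\Bergman(z,z)$, the Jacobian factors at $z$ cancel between numerator and denominator, leaving the density transformation law
\begin{equation} \label{density transformation}
	\Poisson(z,\xi) = \Poisson(f(z),f(\xi))\,|J_f(\xi)|^2.
\end{equation}
This single computation is the engine of the whole argument, so I would carry it out first and state it as a lemma.

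With \eqref{density transformation} in hand, I would handle the generic case $\alpha \neq \pm 1$ via the integral form of the $\alpha$-divergence, for which
\begin{equation*}
	\Phi^* D^{(\alpha)}(f(z),f(w)) = \frac{4}{1-\alpha^2}\left( 1 - \int_{\O} \Poisson(f(z),\xi)^{\frac{1-\alpha}{2}}\, \Poisson(f(w),\xi)^{\frac{1+\alpha}{2}}\, dV(\xi) \right).
\end{equation*}
The key step is the change of variables $\xi = f(\eta)$; since $f$ is a biholomorphism of $\O$ onto itself, $\eta \mapsto f(\eta)$ is a bijection of $\O$ with $dV(\xi) = |J_f(\eta)|^2\, dV(\eta)$. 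Substituting \eqref{density transformation} for both arguments, the total exponent of $|J_f(\eta)|^2$ in the integrand becomes $-\tfrac{1-\alpha}{2} - \tfrac{1+\alpha}{2} + 1 = 0$, so every Jacobian factor cancels and the integral collapses to $\int_{\O} \Poisson(z,\eta)^{\frac{1-\alpha}{2}} \Poisson(w,\eta)^{\frac{1+\alpha}{2}}\, dV(\eta)$, which is precisely the integral defining $\Phi^* D^{(\alpha)}(z,w)$. The constant prefactor $\tfrac{4}{1-\alpha^2}$ is immaterial, so the identity of the integrals gives the claim for all $\alpha \neq \pm 1$. The two remaining values $\alpha = \pm 1$ then follow either by continuity in $\alpha$ or directly from $\Phi^* D^{(\pm 1)} = \Dia$ in Theorem~\ref{thm: pullback of Information geometric concepts} together with the well-known biholomorphic invariance of the diastasis.

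Conceptually, the same conclusion can be packaged as the invariance of an $f$-divergence under an invertible sufficient statistic: \eqref{density transformation} is exactly the density-level form of the measure identity $\kappa \circ \Phi = \Phi \circ f$, where $\kappa : \Prob(\O) \to \Prob(\O)$ is the push-forward induced by $f$. Because $f$ is a diffeomorphism, $\kappa$ is a bijection, and the $\alpha$-divergence, being invariant under invertible reparametrizations of the sample space, satisfies $D^{(\alpha)}(\kappa\mu,\kappa\nu) = D^{(\alpha)}(\mu,\nu)$; applying this to $\mu = \Phi(z)$ and $\nu = \Phi(w)$ yields the proposition. I expect the only point requiring genuine care to be the justification of the change of variables — checking integrability and that the substitution is unaffected by the measure-zero locus $A_z = \{\xi : \Bergman(z,\xi) = 0\}$ on which $\Poisson$ vanishes — but this is routine, since that locus is a proper complex-analytic subset. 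No substantive obstacle remains once the transformation law \eqref{density transformation} is in place.
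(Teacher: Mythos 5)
Your proof is correct, and it runs on the same engine as the paper's --- a change of variables $\xi \mapsto f(\eta)$ in the defining integral, powered by the biholomorphic transformation rule of the Bergman kernel --- but the bookkeeping is organized differently. The paper factors the density as $\Poisson(z,\xi)\,dV(\xi) = e^{-\Dia(z,\xi)}\,\Bergman(\xi,\xi)\,dV(\xi)$, so that $\chi^{(\alpha)}$ is applied to the invariant ratio $e^{\Dia(z,\xi)-\Dia(w,\xi)}$ and is integrated against the invariant form $\Bergman(\xi,\xi)\,dV(\xi)$; a single substitution then disposes of all $\alpha\in\RR$ simultaneously, with no case split. Your primary computation instead tracks the raw Jacobian powers through the explicit integral $\int \Poisson_1^{(1-\alpha)/2}\Poisson_2^{(1+\alpha)/2}$ and observes that the exponents sum to one, which forces a separate treatment of $\alpha=\pm1$ (your appeal to $\Phi^*D^{(\pm1)}=\Dia$ and the invariance of the diastasis handles that cleanly). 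What your version buys is the identity $\Poisson(z,\xi)=\Poisson(f(z),f(\xi))\,|J_f(\xi)|^2$, stated as a lemma: this is exactly the density-level form of $\kappa\circ\Phi=\Phi\circ f$ for an automorphism, and your closing remark --- that any $f$-divergence is invariant under push-forward by an invertible statistic --- is in fact the uniform-in-$\alpha$ argument again, essentially equivalent to the paper's but phrased in the language of sufficiency rather than of the diastasis. Both your route and the paper's share the same unaddressed caveat, namely that for $|\alpha|>1$ the integrand involves a negative power of $\Poisson$ and convergence near the zero locus $A_z$ is not discussed; since the paper is silent on this too, it is not a gap specific to your argument.
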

\begin{proof}
	\begin{align*}
		\Phi^* D^{(\alpha)}(f(z),f(w))  
		&= \int_{\O}  \chi^{(\alpha)} \left(\frac{P(f(w),\xi)}{P(f(z),\xi)}\right) P(f(z),\xi) dV(\xi) \\
		&= \int_{\O}  \chi^{(\alpha)} \left( \frac{e^{\Dia(f(w),\xi)}}{e^{\Dia(f(z),\xi)}} \right) e^{\Dia(f(z),\xi)} \Bergman(\xi,\xi) dV(\xi) \\
		&= \int_{\O}  \chi^{(\alpha)} \left( \frac{e^{\Dia(w, f^{-1}(\xi))}}{e^{\Dia(z,f^{-1}(\xi))}} \right) e^{\Dia(z,f^{-1}(\xi))} \Bergman(\xi,\xi) dV(\xi) \\
		&= \int_{f^{-1}(\O)}  \chi^{(\alpha)} \left( \frac{e^{\Dia(w, \xi)}}{e^{\Dia(z,\xi)}} \right) e^{\Dia(z,\xi)} f^*\left( \Bergman(\xi,\xi) dV(\xi) \right) \\
		&= \int_{\O}  \chi^{(\alpha)} \left( \frac{e^{\Dia(w, \xi)}}{e^{\Dia(z,\xi)}} \right) e^{\Dia(z,\xi)} \Bergman(\xi,\xi) dV(\xi) \\
		&=  \Phi^* D^{(\alpha)}(z,w).
	\end{align*}
    Here, we used the facts that the diastasis function $\Dia(z,w)$ and the Bergman kernel form $\Bergman(\xi,\xi) dV(\xi)$ are invariant under biholomorphisms.
\end{proof}

        \begin{rmk}
            J. Burbea and C. Rao (\cite{BurbeaJacobRaoCRadhakrishna84}) proved that
            \begin{align*}
                (\Phi^*D^{(0)})(p,q) \le \rho(p,q)
            \end{align*}
            for all $p, q \in \O$,
            where $\rho$ is the Skwarczy\'nski distance (\cite{skwarczynski1980biholomorphic}) on $\O$.
        \end{rmk}

	
\vspace{5mm}  
	
	\section{Sufficient Statistic} \label{sec: sufficient statistic}
	
	\subsection{Information geometry}
	
	We first introduce the concept of sufficient statistic in the theory of information geometry based on \cite{ay2015information} and \cite{ay2017informationbook}.
	For two measurable spaces $\Xi_1$ and $\Xi_2$, a measurable map $f : \Xi_1 \rightarrow \Xi_2$ is called a {\it statistic}. 
	A statistic $f$ induces a map $\kappa : \Prob(\Xi_1) \rightarrow \Prob(\Xi_2)$ given by the measure push-forward by $f$, i.e., for $\mu \in \Prob(\Xi_1)$,
	\begin{equation} \label{def: kappa}
		\kappa(\mu)(B) := \mu(f^{-1}(B))
	\end{equation}
	for all Borel subsets $B \subset \Xi_2$. 
	Note that $\int_{\Xi_2} d\kappa(\mu) = \int_{\Xi_1} d\mu = 1$.
	For a $k$-integrable $(k \ge 1)$ statistical model $\Phi_1: M \rightarrow \Prob(\Xi_1, \mu_1)$, $\kappa$ induces a $k$-integrable statistical model $\kappa \circ \Phi_1: M \rightarrow \Prob(\Xi_2, \kappa(\mu_1))$ with the same parameter space $M$, where the density function is given by the Radon-Nikodym derivative
    \begin{align*}
        P_2(x, \zeta) := \frac{d((\kappa \circ \Phi_1)(x))}{d(\kappa(\mu_1))}(\zeta)
    \end{align*}
    for $x \in M$ and $\zeta \in \Xi_2$ (\cite[Theorem 5.4]{ay2017informationbook}).

    Assume that a statistical model $(M, \Xi_1, \mu_1, \Phi)$ is $2$-integrable. 
	\begin{defn} \label{def: sufficient statistic}
		A statistic $f: \Xi_1 \rightarrow \Xi_2$ is called {\it sufficient} for $\Phi: M \rightarrow \Prob(\Xi_1, \mu_1)$ if
		$$ ((\kappa \circ \Phi_1)^* g_{F_2})_{x}(V,V) = (\Phi_1^* g_{F_1})_{x}(V,V) $$
		for all $x \in M$ and $V \in T_x M$,
		where $g_{F_1}$ and $g_{F_2}$ are the Fisher information metrics of $\Prob(\Xi_1, \mu_1)$ and $\Prob(\Xi_2, \kappa(\mu_1))$, respectively. 
		Here, the pullback of $g_{F_2}$ is defined by 
		$$ ((\kappa \circ \Phi_1)^* g_{F_2})_{x}(V,V) := (g_{F_2})_{\kappa(\Phi_1(x))}(\kappa(d\Phi(V)), \kappa(d\Phi(V))) .$$
	\end{defn}
    
	In other words, sufficient statistics preserve the geometric structures (often interpreted as the amount of information in information geometry) of statistical models. Remarkably, the Fisher information metric and the Amari--Chentsov structure are uniquely determined by their invariance under sufficient statistics (\cite{ay2015information}).

    The terminology ``sufficient'' stems from its role in statistical inference. The primary objective of using statistical models is to infer the optimal parameter $x \in M$, representing the ``true'' probability distribution, based on observations from a sample space $\Xi_1$. If a statistic $f: \Xi_1 \rightarrow \Xi_2$ is sufficient, evaluating this statistic in $\Xi_2$ provides all the necessary information to identify the parameter.
    Several mathematical criteria exist to characterize such sufficient statistics.

	\begin{thm}[cf. {\cite[Theorem 2.1]{amari2000methods}, \cite[Lemma 3.3, Theorem 3.5]{ay2015information}, \cite[Proposition 5.5, Proposition 5.6]{ay2017informationbook}}]  
 \label{thm: equivalent conditions for sufficiency}
		Let $(M, \Xi_1, \mu_1, \Phi)$ be a $2$-integrable statistical model with a positive regular density function $P_1(x,\xi)$.
        Then the followings are equivalent.
		\begin{enumerate}
			\item A statistic $f: \Xi_1 \rightarrow \Xi_2$ is sufficient for $\Phi_1: M \rightarrow \Prob(\Xi_1, \mu_1)$.
			\item  The measurable function
			\begin{equation*}
				r(x, \xi) := \frac{P_1(x,\xi)}{P_2(x, f(\xi))} \quad \mu_1(\xi)\text{-a.e.}
			\end{equation*}
			does not depend on $x \in M$, where $\kappa(P_1(x,\xi)\mu_1(\xi)) = P_2(x, \zeta)\kappa(\mu_1)(\zeta)$.
			\item For each $x \in \O_1$, there exist functions $s(x, \cdot) \in L^1(\Xi_2, \kappa(\mu_1))$ and $t \in L^1(\Xi_1, \mu_1)$ such that
			\begin{equation*}
				P_1(x, \xi) = s(x, f(\xi)) t(\xi) \quad \mu_1(\xi)\text{-a.e.}
			\end{equation*} 
		\end{enumerate}
	\end{thm}

    For the proof, we refer readers to \cite{ay2017informationbook}. 
    In \cite{ay2017informationbook}, Proposition 5.5 and Proposition 5.6 show that $(3) \Leftrightarrow (2)$ and $(2) \Leftrightarrow (1)$, respectively. \\

	One of the most interesting features for the Fisher information metric is the following monotonic decreasing property. 

	\begin{thm}[cf. {\cite[Theorem 2.1]{amari2000methods}, \cite[Theorem 3.11]{ay2015information}, \cite[Theorem 5.4]{ay2017informationbook}  }] \label{thm: monotonicity Fisher metric} 
		For a statistic $f : \Xi_1 \rightarrow \Xi_2$, it holds that
		$$ ((\kappa \circ \Phi_1)^* g_{F_2})_{x}(V,V) \le (\Phi_1^* g_{F_1})_{x}(V,V) $$
		for all $x \in M$ and $V \in T_x M$.
	\end{thm}

\vspace{2mm}  	
	\subsection{Bergman statistical model}

	Let $(\O_1, g_{B_1})$ and $(\O_2, g_{B_2})$ be bounded domains in $\CC^n$ with the Bergman metrics and $dV$ be the Lebesgue measure on $\CC^n$.
	Let $\Bergman_1$ and $\Bergman_2$ be the Bergman kernels of $\O_1$ and $\O_2$, respectively.
	Let $\Phi_1 : \O_1 \rightarrow \Prob(\O_1, dV)$ be the Bergman statistical model defined by (\ref{def: map into P(D)}) and $f: \O_1 \rightarrow \O_2$ be a measurable function.
	In this situation, we have the following diagram.
	\begin{equation} \label{eq:diagram triangle}
		\begin{tikzcd}
			(\O_1, g_{B_1}) \arrow{r}{\Phi_1} \arrow[swap]{dr}{\kappa \circ \Phi_1 } &  \arrow{d}{\kappa}  (\Prob(\O_1, dV), g_{F_1})  \\
			& (\Prob(\O_2, \kappa(dV)), g_{F_2})
		\end{tikzcd}
	\end{equation}
	where $\kappa$ is defined by (\ref{def: kappa}). 
	Then Definition \ref{def: sufficient statistic} and Theorem \ref{thm: monotonicity Fisher metric} yield
	\begin{thm} \label{thm: decreasing property of Fisher metric applying to Bergman}
		For a measurable function $f : \O_1 \rightarrow \O_2$, 
		\begin{equation} \label{inequality monotonic decreasing}
			(\kappa \circ \Phi_1)^* g_{F_2} \le \Phi_1^* g_{F_1} = g_{B_1}
		\end{equation}
		on $\O_1$
		and the equality holds if and only if $f$ is sufficient.
	\end{thm}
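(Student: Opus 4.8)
The plan is to reduce the statement to the two facts already at our disposal — Theorem~\ref{thm: pullback of Fisher is Bergman} ($\Phi_1^* g_{F_1} = g_{B_1}$) and the Cram\'er--Rao monotonicity of Theorem~\ref{thm: monotonicity Fisher metric} ($\kappa^* g_{F_2} \le g_{F_1}$ on $M_1$) — tied together by functoriality of the pullback along the composition $\kappa \circ \Phi_1$. First I would record the chain rule $(\kappa \circ \Phi_1)^* g_{F_2} = \Phi_1^*(\kappa^* g_{F_2})$, which follows from $d(\kappa \circ \Phi_1) = d\kappa \circ d\Phi_1$ and the definition of the pullback in Definition~\ref{def: sufficient statistic}. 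Concretely, for $p \in \O_1$ and $V \in T_p\O_1$, writing $\mu = \Phi_1(p) \in M_1$ and $\sigma = d\Phi_1(V) \in T_{\mu}M_1$, one has
\begin{equation*}
((\kappa \circ \Phi_1)^* g_{F_2})_p(V,V) = (\kappa^* g_{F_2})_{\mu}(\sigma,\sigma), \qquad (\Phi_1^* g_{F_1})_p(V,V) = (g_{F_1})_{\mu}(\sigma,\sigma).
\end{equation*}

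Next I would apply Theorem~\ref{thm: monotonicity Fisher metric} to the statistic $f$ and the statistical manifold $M_1 = \Phi_1(\O_1)$, obtaining $(\kappa^* g_{F_2})_{\mu}(\sigma,\sigma) \le (g_{F_1})_{\mu}(\sigma,\sigma)$ for every $\mu \in M_1$ and every $\sigma \in T_{\mu}M_1$. Feeding this into the two displayed identities and invoking Theorem~\ref{thm: pullback of Fisher is Bergman} to rewrite $(g_{F_1})_{\mu}(\sigma,\sigma) = (\Phi_1^* g_{F_1})_p(V,V) = (g_{B_1})_p(V,V)$ gives $((\kappa \circ \Phi_1)^* g_{F_2})_p(V,V) \le (g_{B_1})_p(V,V)$; since $p$ and $V$ are arbitrary, this is exactly the claimed inequality $(\kappa \circ \Phi_1)^* g_{F_2} \le g_{B_1}$ on $\O_1$. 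Note this half uses nothing about $\Phi_1$ beyond its being a smooth map, as it is merely the pullback of a semi-positive inequality.

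For the equality statement I would trace the equivalence through the same correspondence $(p,V) \leftrightarrow (\mu,\sigma)$. The metric identity $(\kappa \circ \Phi_1)^* g_{F_2} = g_{B_1}$ on $\O_1$ is equivalent, via the displayed identities, to $(\kappa^* g_{F_2})_{\mu}(\sigma,\sigma) = (g_{F_1})_{\mu}(\sigma,\sigma)$ for all $\mu \in M_1$ and $\sigma \in T_{\mu}M_1$, which is precisely sufficiency of $f$ in the sense of Definition~\ref{def: sufficient statistic}. The point requiring care, and the only genuine obstacle here, is that this equivalence be tight in both directions: one must know that $d\Phi_1 : T_p\O_1 \to T_{\mu}M_1$ is surjective, so that quantifying over $V \in T_p\O_1$ is the same as quantifying over all $\sigma \in T_{\mu}M_1$. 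This holds because $\Phi_1$ is an immersion for a bounded domain, by Lemma~\ref{lem: immersion condition} together with Proposition~\ref{prop:Kodaira-embedding}, so that $M_1 = \Phi_1(\O_1)$ is a submanifold with $T_{\mu}M_1 = d\Phi_1(T_p\O_1)$. Finally, passing from equality of the associated quadratic forms to equality of the full bilinear metrics is justified by polarization, which completes the equivalence and hence the theorem.
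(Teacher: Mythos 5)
Your proposal is correct and follows essentially the same route as the paper, which simply states that the theorem follows from Definition~\ref{def: sufficient statistic}, Theorem~\ref{thm: monotonicity Fisher metric}, and Theorem~\ref{thm: pullback of Fisher is Bergman}. Your additional care about $d\Phi_1$ being surjective onto $T_{\mu}M_1$ (so that the equality case matches the definition of sufficiency exactly) is a point the paper leaves implicit, but it is handled correctly via the immersion property.
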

	
	By Theorem \ref{thm: decreasing property of Fisher metric applying to Bergman}, we have a new Riemannian metric $(\kappa \circ \Phi_1)^* g_{F_2}$ on $\O_1$ depending on $f$ which is always less than or equal to the Bergman metric of $\O_1$.  
	Unfortunately, we do not know whether it is Hermitian or not, even though $f$ is a holomorphic function. \\

        For a $C^1$-smooth map $f: \O_1 \rightarrow \O_2$, 
        let $Z := \{ f(z) \in \O_2 : |J_{\RR}f (z)|=0 \}$ be the set of all critical values, 
        where $|J_{\RR}f (z)|$ is the determinant of the real Jacobian matrix of $f$.
        Note that $Z$ is a measure zero subset in $\O_2$ by Sard's theorem.
	Suppose that 
	\begin{align} \label{f locally invertible}
		&f: f^{-1}(\O_2 \setminus Z) \rightarrow (\O_2 \setminus Z) \text{ is a } m\text{-sheeted } C^1\text{-smooth covering map} 
	\end{align} 
	for some $m \in \NN$.
	In this case, $\kappa$ can be written as 
	\begin{equation} \label{kappa expression}
		\kappa(P_1(z, \xi) dV(\xi))(z, \zeta) = \sum_{k=1}^{m} \frac{|\Bergman_{1}(z, f^{-1}_k(\zeta) )|^2 |J_{\RR}f^{-1}_k(\zeta)|}{\Bergman_{1}(z,z)} dV(\zeta) 
		= \frac{K(z, \zeta)}{\Bergman_{1}(z,z)} dV(\zeta)
	\end{equation}
	for $\zeta \in \O_2 \setminus Z$,
	where $f^{-1}_k$ is a local inverse of $f$ and $K(z, \zeta) :=  \sum_{k=1}^{m} |\Bergman_{1}(z, f^{-1}_k(\zeta) )|^2 |J_{\RR}f^{-1}_k(\zeta)|$.  \\
	
	Now, assuming that $f$ satisfies (\ref{f locally invertible}), we will give a proof of the following inequality 
	\begin{align*}
		((\kappa \circ \Phi_1)^* g_{F_2})_z (\partial_{\alpha}, \partial_{\overline{\alpha}})
		\le (\Phi_1^* g_{F_1})_z (\partial_{\alpha}, \partial_{\overline{\alpha}})
	\end{align*}
	for all $z \in \O_1$ and $\partial_{\alpha} = \frac{\partial}{\partial z_{\alpha}} = \frac{1}{2}( \frac{\partial}{\partial x_{\alpha}} - \frac{\partial}{\partial y_{\alpha}}) \in T^{(1,0)}_z \O_1$,
	and induce a sufficient and necessary condition for the equality in terms of the Bergman kernels, which will be used to prove Theorem \ref{thm: sufficient iff 1-1}.
	Note that for a Riemannian metric $g$ (not necessarily Hermitian), 
	$$ 4 \hat{g} \left(\frac{\partial}{\partial z_{\alpha}}, \frac{\partial}{\partial \overline{z}_{\alpha}} \right) 
	=   g \left(\frac{\partial}{\partial x_{\alpha}},\frac{\partial}{\partial x_{\alpha}} \right) 
	+  g\left(\frac{\partial}{\partial y_{\alpha}}, \frac{\partial}{\partial y_{\alpha}} \right)  ,$$
	where $\hat{g}$ is the $\CC$-linearly extension of $g$.
	Recall that 
	$$ P_1(z,\xi) = \frac{|\Bergman_{1}(z, \xi)|^2}{\Bergman_{1}(z,z)}.
	$$

        \begin{lem} \label{lem: int K = B}
            For each $z \in \O_1$ and $1 \le \alpha \le n$, the following identities hold.
            \begin{align*}
                \int_{\O_2} \partial_{\alpha} K(z,\zeta) dV(\zeta) = \partial_{\alpha} \Bergman_1(z,z)
            \end{align*}
            and 
            \begin{align*}
                \int_{\O_2} \partial_{\alpha} \partial_{\overline{\alpha}} K(z,\zeta) dV(\zeta) = \partial_{\alpha} \partial_{\overline{\alpha}} \Bergman_1(z,z).
            \end{align*}
        \end{lem}
        \begin{proof}
            Fix $z \in \O_1$ and $1 \le \alpha \le n$. Let $\{ s_j \}_{j=0}^{\infty}$ be a special orthonormal basis with respect to $z$ and $\partial_{\alpha} \in T_z^{1,0}\O$ (Section~\ref{subsec: Special orthonormal basis}). Then
            \begin{align*}
                \partial_{\alpha} K(z,\zeta) 
                &= \sum_{k=1}^{m} \left( \partial_{\alpha} s_0(z) \overline{s_0(f^{-1}_k(\zeta))} + \partial_{\alpha}s_1(z) \overline{s_1(f^{-1}_k(\zeta))} \right)
                \left( \overline{s_0(z)} s_0(f^{-1}_k(\zeta)) \right)
                |J_{\RR}f^{-1}_k(\zeta)| \\
                &= \partial_{\alpha}s_0(z) \overline{s_0(z)} \sum_{k=1}^{m} |s_0(f^{-1}_k(\zeta))|^2
                |J_{\RR}f^{-1}_k(\zeta)| \\
                &+ \partial_{\alpha}s_1(z) \overline{s_0(z)} \sum_{k=1}^{m} s_0(f^{-1}_k(\zeta)) \overline{s_1(f^{-1}_k(\zeta))}
                |J_{\RR}f^{-1}_k(\zeta)|.
            \end{align*}
            Also, 
            \begin{align*}
                \int_{\O_2} \sum_{k=1}^{m} |s_0(f^{-1}_k(\zeta))|^2
                |J_{\RR}f^{-1}_k(\zeta)| dV(\zeta) = \int_{\O_1} |s_0(\xi)|^2 dV(\xi) = 1
            \end{align*}
            and
            \begin{align*}
                \int_{\O_2} \sum_{k=1}^{m} s_0(f^{-1}_k(\zeta)) \overline{s_1(f^{-1}_k(\zeta))}
                |J_{\RR}f^{-1}_k(\zeta)| dV(\zeta) = \int_{\O_1} s_0(\xi) \overline{s_1(\xi)} dV(\xi) = 0
            \end{align*}
            because $\{ s_j \}_{j=0}^{\infty}$ is an orthonormal basis.
            Therefore, 
            \begin{align*}
                \int_{\O_2} \partial_{\alpha} K(z,\zeta) dV(\zeta)
                = \partial_{\alpha}s_0(z) \overline{s_0(z)}
                = \partial_{\alpha} \Bergman_1(z,z).
            \end{align*}
            Similarly, 
            \begin{align*}
                \int_{\O_2} \partial_{\alpha} \partial_{\overline{\alpha}} K(z,\zeta) dV(\zeta)
                = |\partial_{\alpha}s_0(z)|^2 + |\partial_{\alpha}s_1(z)|^2
                = \partial_{\alpha} \partial_{\overline{\alpha}} \Bergman_1(z,z).
            \end{align*}
        \end{proof}

	For $z \in \O_1$,
	\begin{align*} 
		&((\kappa \circ \Phi_1)^* g_{F_2})_z (\partial_{\alpha}, \partial_{\overline{\alpha}}) \\
		=& \int_{\O_2} | \partial_{\alpha} \log \kappa(\Poisson_{1} dV) |^2 \kappa(\Poisson_{1} dV)  \\
		=& \int_{\O_2} \left( |\partial_{\alpha} \log K|^2 - \partial_{\alpha}\log K \cdot \partial_{\overline{\alpha}} \log \Bergman_1 - \partial_{\overline{\alpha}}\log K \cdot \partial_{\alpha} \log \Bergman_1 + |\partial_{\alpha} \log \Bergman_1|^2 \right) \frac{K}{\Bergman_1} dV \\
            =& \frac{1}{\Bergman_1} \int_{\O_2} \frac{|\partial_{\alpha} K|^2}{K} dV - |\partial_{\alpha} \log \Bergman_1|^2 \\
            =& \frac{1}{\Bergman_1} \int_{\O_2} \frac{|\partial_{\alpha} K|^2}{K} dV - \frac{\partial_{\alpha} \partial_{\overline{\alpha}} \Bergman_1}{\Bergman_1} + \partial_{\alpha}\partial_{\overline{\alpha}}\log \Bergman_1 \\
            =& \frac{1}{\Bergman_{1}} \int_{\O_2} \left( \frac{|\partial_{\alpha} K|^2}{K} - \partial_{\alpha}\partial_{\overline{\alpha}} K \right) dV 
		+ \partial_{\alpha}\partial_{\overline{\alpha}} \log \Bergman_{1},
	\end{align*}
        where Lemma~\ref{lem: int K = B} is applied in the third and fifth equalities.
	Therefore,
	\begin{align} \label{4.44}
		&((\kappa \circ \Phi_1)^* g_{F_2})_z (\partial_{\alpha}, \partial_{\overline{\alpha}}) \\
		=& \frac{1}{\Bergman_{1}(z,z)} \int_{\O_2} \left( \frac{|\partial_{\alpha} K(z, \zeta)|^2}{K(z, \zeta)} - \partial_{\alpha}\partial_{\overline{\alpha}} K(z, \zeta) \right) dV(\zeta) 
		+ \partial_{\alpha}\partial_{\overline{\alpha}} \log \Bergman_{1}(z,z). \nonumber
	\end{align}

	\begin{lem} \label{lem: K inequality} 
		For each $z \in \O_1$ and $1 \le \alpha \le n$, the following inequality always holds.
		\begin{equation} \label{inequality K(z, xi)}
			\frac{|\partial_{\alpha} K(z, \zeta)|^2}{K(z,\zeta)} \le \partial_{\alpha}\partial_{\overline{\alpha}} K(z, \zeta) 
		\end{equation}
		for all $\zeta \in \O_2$ and the equality holds if and only if 
		$$ \partial_{\alpha} \log \Bergman_{1}(z, f_1^{-1}(\zeta)) 
		= \cdots  
		= \partial_{\alpha} \log \Bergman_{1}(z, f_m^{-1}(\zeta)).
		$$
	\end{lem}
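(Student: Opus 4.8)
The plan is to recognize \eqref{inequality K(z, xi)} as nothing more than a weighted Cauchy--Schwarz inequality, once the holomorphic structure of the Bergman kernel is exploited. First I would fix $\zeta \in \O_2 \setminus V$ (so that the local inverses $f_k^{-1}$ of the covering \eqref{f locally invertible} are defined) and abbreviate
\begin{equation*}
	B_k := \Bergman_1\!\left(z, f_k^{-1}(\zeta)\right), \qquad J_k := |J_{\RR} f_k^{-1}(\zeta)| \ge 0,
\end{equation*}
so that $K(z,\zeta) = \sum_{k=1}^{m} |B_k|^2 J_k$. The structural observation that makes everything work is that both $f_k^{-1}(\zeta)$ and $J_k$ depend on $\zeta$ alone, while $B_k$ is holomorphic in $z$ (being $\Bergman_1$ evaluated at a point in its holomorphic first slot). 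Hence $\partial_{\overline{\alpha}} B_k = 0$ and, equivalently, $\partial_{\alpha} \overline{B_k} = 0$.

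Next I would compute the two derivatives appearing in \eqref{inequality K(z, xi)} using only this (anti)holomorphicity. Differentiating $K = \sum_k B_k \overline{B_k} J_k$ and discarding the terms killed by $\partial_{\overline{\alpha}} B_k = 0$ and $\partial_{\alpha}\overline{B_k}=0$ yields
\begin{equation*}
	\partial_{\alpha} K = \sum_{k=1}^m (\partial_{\alpha} B_k)\,\overline{B_k}\, J_k, \qquad \partial_{\alpha}\partial_{\overline{\alpha}} K = \sum_{k=1}^m |\partial_{\alpha} B_k|^2 J_k.
\end{equation*}
With these two identities the desired inequality, after clearing the denominator $K$, is exactly
\begin{equation*}
	\Bigl|\sum_{k=1}^m (\partial_{\alpha} B_k)\,\overline{B_k}\, J_k\Bigr|^2 \le \Bigl(\sum_{k=1}^m |\partial_{\alpha} B_k|^2 J_k\Bigr)\Bigl(\sum_{k=1}^m |B_k|^2 J_k\Bigr),
\end{equation*}
which is the Cauchy--Schwarz inequality for the positive semidefinite Hermitian form $\langle u,v\rangle := \sum_{k} u_k \overline{v_k}\, J_k$ applied to the vectors $u_k = \partial_{\alpha} B_k$ and $v_k = B_k$. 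This proves \eqref{inequality K(z, xi)}.

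For the equality clause I would invoke the equality case of Cauchy--Schwarz: the form vanishes on no nonzero combination precisely when $u$ and $v$ are linearly dependent, i.e. $\partial_{\alpha} B_k = \lambda\, B_k$ for a constant $\lambda$ independent of $k$ (over the indices with $J_k > 0$ and $B_k \ne 0$). Dividing by $B_k$ rewrites this as $\partial_{\alpha}\log \Bergman_1(z, f_k^{-1}(\zeta)) = \lambda$ being independent of $k$, which is the stated condition. I expect the only delicate point to be the bookkeeping around degeneracies: the logarithmic-derivative formulation is meaningful only away from the zero locus of the $B_k$ and the exceptional set $V$, and one should note these are of measure zero so that they do not affect the integral comparison in \eqref{4.44}. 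The core estimate itself is immediate from the weighted Cauchy--Schwarz once the holomorphicity of $\Bergman_1$ in its first variable is used, so I anticipate no substantive obstacle beyond this routine care with the exceptional sets.
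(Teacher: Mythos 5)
Your proposal is correct and follows essentially the same route as the paper: both compute $\partial_{\alpha}K$ and $\partial_{\alpha}\partial_{\overline{\alpha}}K$ using the holomorphicity of $\Bergman_1(z,\cdot\,)$ in its first slot and then apply the weighted Cauchy--Schwarz inequality with weights $|J_{\RR}f_k^{-1}(\zeta)|$, deriving the equality condition from the linear-dependence case. Your added remarks about the exceptional sets $V$ and the zero locus of the $B_k$ are sensible bookkeeping that the paper leaves implicit.
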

	\begin{proof}
		\begin{align*}
			K &= \sum_{k=1}^m |\Bergman_{1}(z, f_k^{-1}(\zeta))|^2 |J_{\RR}f_k^{-1}(\zeta)| ,\\
			\partial_{\alpha} K &= \sum_{k=1}^m \partial_{\alpha} \Bergman_{1}(z, f_k^{-1}(\zeta)) \Bergman_{1}(f_k^{-1}(\zeta), z) |J_{\RR}f_k^{-1}(\zeta)| ,\\
			\partial_{\alpha} \partial_{\overline{\alpha}} K &= \sum_{k=1}^m |\partial_{\alpha} \Bergman_{1}(z, f_k^{-1}(\zeta))|^2 |J_{\RR}f_k^{-1}(\zeta)| , \\ 
			|\partial_{\alpha} K|^2 &= 
			\left| \sum_{k=1}^m \left(  \Bergman_{1}(f_k^{-1}(\zeta), z) |J_{\RR}f_k^{-1}(\zeta)|^{\frac{1}{2}}  \right) 
			\left( \partial_{\alpha} \Bergman_{1}(z, f_k^{-1}(\zeta)) |J_{\RR}f_k^{-1}(\zeta)|^{\frac{1}{2}}   \right)
			\right|^2 \\
			&\le K \cdot \partial_{\alpha} \partial_{\overline{\alpha}} K,
		\end{align*}	    
		where, in the last inequality, the Cauchy-Schwarz inequality is applied.
		The equality holds if and only if 
		{\setlength\arraycolsep{2pt}
			$$
			\begin{array}{rrl}
				&  \partial_{\alpha} \Bergman_{1}(z, f_k^{-1}(\zeta)) |J_{\RR}f_k^{-1}(\zeta)|^{\frac{1}{2}}
				&= C(z, \zeta) \cdot \Bergman_{1}(z, f_k^{-1}(\zeta)) |J_{\RR}f_k^{-1}(\zeta)|^{\frac{1}{2}}    \\ [.5em]
				\Leftrightarrow \quad\quad &
				\partial_{\alpha} \log \Bergman_{1}(z, f_k^{-1}(\zeta)) &= C(z, \zeta)
			\end{array}
			$$
		}
		for all $k = 1, \cdots, m$.
	\end{proof}
	Therefore, Lemma \ref{lem: K inequality} implies that, in (\ref{4.44}), 
	\begin{align*}
		((\kappa \circ \Phi_1)^* g_{F_2})_z (\partial_{\alpha}, \partial_{\overline{\alpha}})
		\le \partial_{\alpha}\partial_{\overline{\alpha}} \log \Bergman_{1}(z,z) 
		= (\Phi_1^* g_{F_1})_z (\partial_{\alpha}, \partial_{\overline{\alpha}}).
	\end{align*}
	

	\begin{thm} \label{thm: sufficient iff 1-1}
		Assume that $f : \O_1 \rightarrow \O_2$ satisfies (\ref{f locally invertible}).
        Then $f$ is sufficient for the Bergman statistical model if and only if $f: f^{-1}(\O_2 \setminus Z) \rightarrow (\O_2 \setminus Z)$ is injective,
        where $Z = \{ f(z) \in \O_2 : |J_{\RR}f (z)|=0 \}$.
	\end{thm}
	\begin{proof}
        If $f: f^{-1}(\O_2 \setminus Z) \rightarrow (\O_2 \setminus Z)$ is injective, then there exists a measurable function $f^{-1} : \O_2 \rightarrow \O_1$ such that $f \circ f^{-1} = \text{id}$. This allows for the factorization $P_1(z,\xi) = s(z, f(\xi)) t(\xi)$, where $s(z, \zeta) = P_1(z, f^{-1}( \zeta ))$ and $t(\xi) = 1$.
        Consequently, Theorem~\ref{thm: equivalent conditions for sufficiency} implies $f$ is a sufficient statistic. \\
		
		Fix $p \in \O_1$. 
            Let $\{ s_j \}_{j=0}^{\infty}$ be a special orthonormal basis for $A^2(\O_1)$ with respect to $p$ constructed in Section~\ref{subsec: Special orthonormal basis}.
		Recall that 
            $
            D^1 = \frac{\partial}{\partial z_1}, 
            D^2 = \frac{\partial}{\partial z_2}, \cdots, 
		D^n = \frac{\partial}{\partial z_n}, 
            D^{n+1} = \frac{\partial^2}{\partial z_1 \partial z_1},  
		D^{n+2} = \frac{\partial^2}{\partial z_1 \partial z_2}, \cdots $ and so on.

        Assume that $f$ is sufficient and for the sake of contradiction $f: f^{-1}(\O_2 \setminus Z) \rightarrow (\O_2 \setminus Z)$ is not injective, i.e., there exist $p \neq q \in f^{-1}(\O_2 \setminus Z)$ with $f(p)=f(q)$. Then (\ref{4.44}) and Lemma \ref{lem: K inequality} imply that 
		\begin{equation} \label{4.55}
			D^{k} \log \Bergman_{1}(p, p) = D^{k} \log \Bergman_{1}(p, q)
		\end{equation}
		for all $k \ge 1$.  \\
		
		\noindent
		{\bf Claim}: $s_j(q)=0$ for all $j \ge 1$.
		\begin{proof}[Proof of the claim]
			We use the mathematical induction. With the special orthonormal basis $\{ s_j \}_{j=0}^{\infty}$,
			\begin{align*}
				\Bergman_{1}(p,q) &= s_0(p) \overline{s_0(q)},  \\
				D^1 \Bergman_{1}(p,q) &= D^1 s_0(p) \overline{s_0(q)} + D^1 s_1(p) \overline{s_1(q)}, \\
				D^1 \log \Bergman_{1}(p,q) &= \frac{D^1 s_0(p)}{s_0(p)} + \frac{D^1 s_1(p) \overline{s_1(q)}}{s_0(p) \overline{s_0(q)}}.
			\end{align*}
			Then the equation (\ref{4.55}) for $k=1$ implies that 
			$$ \frac{D^1 s_0(p)}{s_0(p)} + \frac{D^1 s_1(p) \overline{s_1(q)}}{s_0(p) \overline{s_0(q)}}
			= \frac{D^1 s_0(p)}{s_0(p)} ,$$
			which concludes that $s_1(q)=0$ because $D^1 s_1(p) \neq 0$.
			Now, for $N \ge 1$ suppose that $s_j(q) = 0$ for all $1 \le j \le N$. Then 
			\begin{align*}
				D^{N+1} \Bergman_{1}(p,q) 
				&= D^{N+1} s_0(p) \overline{s_0(q)} + D^{N+1} s_{N+1}(p) \overline{s_{N+1}(q)} \\
				D^{N+1} \log \Bergman_{1}(p,q) 
				&= \frac{D^{N+1} \Bergman_{1}(p,q)}{\Bergman_{1}(p,q)} + R(p,q)    \\
				&= \frac{D^{N+1} s_0(p)}{s_0(p)} + \frac{D^{N+1} s_{N+1}(p) \overline{s_{N+1}(q)}}{s_0(p) \overline{s_0(q)}} + R(p,q)
			\end{align*}
			where $R(p,q)$ is the remaining term. Note that $R(p,q)$ is a combination of $D^M \Bergman_{1}(p,q) / \Bergman_{1}(p,q)$ for $1 \le M \le N$, but then, from the assumption that $s_j(q) = 0$ for all $1 \le j \le N$,
			$$ \frac{D^M \Bergman_{1}(p,q)}{\Bergman_{1}(p,q)} = \frac{D^M s_0(p) \overline{s_0(q)}}{s_0(p) \overline{s_0(q)}} 
			= \frac{D^M s_0(p)}{s_0(p)},
			$$
			which depends only on $p$. Hence, $R(p,q) = R(p)$ depends only on $p$.
			Again the equation (\ref{4.55}) for $k = N+1$ yields that 
			$$ \frac{D^{N+1} s_0(p)}{s_0(p)} + \frac{D^{N+1} s_{N+1}(p) \overline{s_{N+1}(q)}}{s_0(p) \overline{s_0(q)}}
			= \frac{D^{N+1} s_0(p)}{s_0(p)}.$$
			Therefore, $s_{N+1}(q) = 0$ because $D^{N+1} s_{N+1}(p) \neq 0$.
		\end{proof}
		Now, by Lemma \ref{lem: injective condition}, since $\Phi_1: \O_1 \rightarrow \Prob(\O_1)$ is injective (Theorem~\ref{thm: Phi is an injective immersion}), there exists $\phi \in A^2(\O_1)$ such that $\phi(p) = 0$ and $\phi(q) \neq 0$, and it can be written as $\phi(z) = \sum_{j=0}^{\infty} a_j s_j(z)$ for some $a_j \in \CC$.
		Then 
		$$ 0 = \phi(p) = a_0 s_0(p) \quad \text{and} \quad 0 \neq \phi(q) = a_0 s_0(q) $$
		Since $s_0(p) \neq 0$, we have $a_0 = 0$ and hence $\phi(q) = 0$, which is a contradiction.	 \\
	\end{proof}

    Let $f: \O_1 \rightarrow \O_2$ be a proper holomorphic map. According to a result by R. Remmert, the set of critical values $Z = \{ f(z) \in \O_2 : |J_{\CC} f(z)|=0 \}$, where $|J_{\CC} f(z)|$ is the determinant of the complex Jacobian matrix of $f$, is a complex variety in $\O_2$. Under this assumption, $f$ satisfies the condition (\ref{f locally invertible}). 
    We note that $|J_{\CC} f(z)|^2 = |J_{\RR} f(z)|$, and $f$ is necessarily surjective (\cite[Proposition 15.1.5]{rudin2012function_book}). 
    By applying Theorem~\ref{thm: sufficient iff 1-1}, we obtain the following characterization for biholomorphisms.

\begin{cor} \label{cor: proper holo 1-1 if and only if sufficient}
	A proper holomorphic map $f: \O_1 \rightarrow \O_2$ is a biholomorphism if and only if $f$ is sufficient for the Bergman statistical model.
\end{cor}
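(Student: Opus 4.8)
The plan is to reduce the statement to Theorem~\ref{thm: sufficient iff 1-1}, whose hypothesis is precisely that $f$ be surjective and satisfy (\ref{f locally invertible}). The two directions then separate cleanly. If $f$ is a biholomorphism, it is in particular injective and satisfies (\ref{f locally invertible}) trivially with $m=1$ and $V=\emptyset$, so Theorem~\ref{thm: sufficient iff 1-1} (or directly Proposition~\ref{prop: 1-1 implies sufficient}) shows that $f$ is sufficient. The substantive direction is the converse: assuming $f$ proper, surjective, holomorphic and sufficient, I must produce a biholomorphism.

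First I would verify that a proper surjective holomorphic map between the equidimensional bounded domains $\O_1,\O_2\subset\CC^n$ automatically satisfies (\ref{f locally invertible}). This is the classical structure theorem for proper holomorphic maps: the Jacobian determinant $J_{\CC}f$ is a holomorphic function on $\O_1$ whose zero set $Z$ is a proper analytic subvariety, and by Remmert's proper mapping theorem the critical value set $V:=f(Z)$ is a proper analytic subvariety of $\O_2$, hence of Lebesgue measure zero. Away from $V$ the map $f$ is a local biholomorphism, and properness forces $f:f^{-1}(\O_2\setminus V)\to\O_2\setminus V$ to be a finite covering map of some constant sheet number $m\in\NN$; in particular it is $C^1$-smooth. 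Thus (\ref{f locally invertible}) holds.

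With (\ref{f locally invertible}) in hand, Theorem~\ref{thm: sufficient iff 1-1} applies and shows that sufficiency of $f$ is equivalent to injectivity of $f$. It therefore remains to upgrade an injective proper holomorphic map to a biholomorphism. Here I would invoke the fact that an injective holomorphic map between domains in $\CC^n$ has nowhere-vanishing Jacobian and is a biholomorphism onto its image (an Osgood-type rigidity theorem). Since $f$ is also surjective, $f(\O_1)=\O_2$, so $f:\O_1\to\O_2$ is a biholomorphism, completing the argument.

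The main obstacle is the first step: confirming that a general proper surjective holomorphic map really does fall under the hypothesis (\ref{f locally invertible}). This requires the branched-covering description of proper holomorphic maps—in particular that the branch locus downstairs is analytic (hence null) and that the number of sheets is globally constant—so the essential input is Remmert's proper mapping theorem together with the local normal form of a finite holomorphic map. Once this structural fact is secured, the remaining implications are immediate consequences of Theorem~\ref{thm: sufficient iff 1-1} and the standard rigidity of injective holomorphic maps.
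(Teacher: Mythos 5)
Your proposal is correct and follows exactly the route the paper intends: the corollary is stated without proof as an immediate consequence of Theorem~\ref{thm: sufficient iff 1-1}, with the paper supplying the needed structural fact right afterwards (Remmert's theorem guaranteeing that a proper holomorphic map satisfies (\ref{f locally invertible}) off a measure-zero variety $V$), and the final upgrade from injective proper holomorphic map to biholomorphism is the standard rigidity you cite.
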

\begin{proof}
	It follows from Theorem~\ref{thm: sufficient iff 1-1} and the fact that the number of preimage of singular value is less than that of preimage of regular value when $f$ is proper holomorphic (\cite[Theorem 15.1.9]{rudin2012function_book}).
\end{proof}

	Now, we consider the following diagram, which does not commute in general.
	\begin{equation}\label{eq:diagram rectangle}
		\begin{tikzcd}
			(\O_1, g_{B_1}) \arrow[swap]{d}{f} \arrow{r}{\Phi_1}  & \arrow{d}{\kappa}   (\Prob(\O_1), g_{F_1})  \\
			(\O_2, g_{B_2}) \arrow{r}{\Phi_2}  &  (\Prob(\O_2), g_{F_2})
		\end{tikzcd}
	\end{equation}
	
	\begin{prop} \label{prop: comparison btw P_1 and P_2}
		For $z \in \O_1$,
		$$ (\kappa \circ \Phi_1)(z) \ge \frac{\Bergman_2(f(z),f(z)) |J_{\CC}f(z)|^2 }{m \Bergman_1(z,z)} (\Phi_2 \circ f)(z).  $$
		where $J_{\CC}f(z)$ is the determinant of complex Jacobian matrix of $f$. Moreover, the equality holds if and only if
		\begin{equation} \label{4.10}
			\Bergman_1(z, f_1^{-1}(\zeta)) \overline{J_{\CC}f_1^{-1}(\zeta)} = \cdots = \Bergman_1(z, f_m^{-1}(\zeta)) \overline{J_{\CC}f_m^{-1}(\zeta)}
		\end{equation}
		for all $z \in \O_1$ and $\zeta \in \O_2 \setminus Z$.
	\end{prop}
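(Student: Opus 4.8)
The plan is to reduce the asserted inequality of measures to a single pointwise scalar inequality on $\O_2\setminus V$, and then to derive that inequality from Bell's transformation formula for the Bergman kernels under the proper map $f$ together with the Cauchy--Schwarz inequality.

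First I would write both measures as densities against $dV(\zeta)$. By (\ref{kappa expression}) the left-hand side has density $K(z,\zeta)/\Bergman_1(z,z)$, whereas $(\Phi_2\circ f)(z)=\frac{|\Bergman_2(f(z),\zeta)|^2}{\Bergman_2(f(z),f(z))}\,dV(\zeta)$, so the right-hand side has density $\frac{|J_{\CC}f(z)|^2\,|\Bergman_2(f(z),\zeta)|^2}{m\,\Bergman_1(z,z)}$. Since $V$ has measure zero, the claimed inequality between measures is equivalent to the pointwise bound
\begin{equation*}
K(z,\zeta)=\sum_{k=1}^m |\Bergman_1(z,f_k^{-1}(\zeta))|^2\,|J_{\CC}f_k^{-1}(\zeta)|^2 \;\ge\; \frac{|J_{\CC}f(z)|^2\,|\Bergman_2(f(z),\zeta)|^2}{m}
\end{equation*}
for $\zeta\in\O_2\setminus V$, where I used $|J_{\RR}f_k^{-1}(\zeta)|=|J_{\CC}f_k^{-1}(\zeta)|^2$ for the holomorphic local inverses.

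The key ingredient is the transformation formula
\begin{equation*}
J_{\CC}f(z)\,\Bergman_2(f(z),\zeta)=\sum_{k=1}^m \Bergman_1(z,f_k^{-1}(\zeta))\,\overline{J_{\CC}f_k^{-1}(\zeta)},
\end{equation*}
which I would establish via the pullback operator $\Lambda g:=(g\circ f)\,J_{\CC}f$. The change-of-variables formula for the $m$-sheeted covering gives $\norm{\Lambda g}^2_{\O_1}=m\norm{g}^2_{\O_2}$, so $\Lambda$ maps $A^2(\O_2)$ boundedly into $A^2(\O_1)$; the same change of variables, using $J_{\CC}f(f_k^{-1}(\zeta))\,J_{\CC}f_k^{-1}(\zeta)=1$ and $|J_{\RR}f_k^{-1}(\zeta)|=|J_{\CC}f_k^{-1}(\zeta)|^2$, identifies the adjoint as $\Lambda^* G(\zeta)=\sum_{k=1}^m G(f_k^{-1}(\zeta))\,J_{\CC}f_k^{-1}(\zeta)$. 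Evaluating $\inner{\Lambda g,\Bergman_1(\cdot,z)}_{\O_1}=\inner{g,\Lambda^*\Bergman_1(\cdot,z)}_{\O_2}$ with the reproducing property on each domain (Proposition~\ref{prop: Bergman reproducing}) shows that every $g\in A^2(\O_2)$ is paired to zero against the difference of the two anti-holomorphic-in-$\zeta$ kernels above; since that difference is itself the conjugate of an element of $A^2(\O_2)$, it must vanish, which yields the formula after conjugation.

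Finally I would apply Cauchy--Schwarz to the formula, pairing the vector $\big(\Bergman_1(z,f_k^{-1}(\zeta))\,\overline{J_{\CC}f_k^{-1}(\zeta)}\big)_{k=1}^m$ against $(1,\dots,1)$:
\begin{equation*}
|J_{\CC}f(z)|^2\,|\Bergman_2(f(z),\zeta)|^2=\left|\sum_{k=1}^m \Bergman_1(z,f_k^{-1}(\zeta))\,\overline{J_{\CC}f_k^{-1}(\zeta)}\right|^2\le m\,K(z,\zeta),
\end{equation*}
which is exactly the pointwise bound, with equality if and only if all the entries $\Bergman_1(z,f_k^{-1}(\zeta))\,\overline{J_{\CC}f_k^{-1}(\zeta)}$ agree, i.e.\ condition (\ref{4.10}). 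The main obstacle is the careful proof of the transformation formula: tracking the Jacobian factors through the covering change of variables and the adjoint computation, and justifying the passage from ``paired to zero against every $g\in A^2(\O_2)$'' to ``the kernels coincide,'' which relies on the completeness and reproducing structure of $A^2(\O_2)$.
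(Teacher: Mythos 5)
Your proposal is correct and follows essentially the same route as the paper: the density identity \eqref{kappa expression}, Bell's transformation rule, and the Cauchy--Schwarz inequality against the all-ones vector, with equality exactly when the $m$ terms coincide, i.e.\ \eqref{4.10}. The only difference is that you prove Bell's formula via the pullback operator and its adjoint, whereas the paper simply cites it as Theorem~\ref{thm: Bells transformation rule}; your sketch of that argument is the standard one and is sound.
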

	
	To prove it, we need the following transformation rule for the Bergman kernels by S. R. Bell.
	
	\begin{thm}[Theorem 1, \cite{bell1982bergman}] \label{thm: Bells transformation rule}
		Let $f: \O_1 \rightarrow \O_2$ be a proper holomorphic map. Then
		$$ J_{\CC}f(z) \Bergman_2(f(z), \zeta) = \sum_{k=1}^m \Bergman_1(z, f_k^{-1}(\zeta)) \overline{J_{\CC}f_k^{-1}(\zeta)}  $$
		for $z \in \O_1$ and $\zeta \in \O_2 \setminus V$.
	\end{thm}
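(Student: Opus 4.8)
The plan is to prove the identity by testing both sides, regarded as functions of $z \in \O_1$ with $\zeta \in \O_2 \setminus V$ held fixed, against an arbitrary element of the Bergman space $A^2(\O_1)$. First I would record that both sides indeed lie in $A^2(\O_1)$ in the variable $z$. The right-hand side is a finite linear combination of the functions $z \mapsto \Bergman_1(z, f_k^{-1}(\zeta))$, each of which belongs to $A^2(\O_1)$. For the left-hand side, $z \mapsto \Bergman_2(f(z),\zeta)$ is holomorphic (a composition with $f$), and the change-of-variables formula for the $m$-sheeted covering gives $\int_{\O_1} |J_{\CC}f(z)\,\Bergman_2(f(z),\zeta)|^2\, dV = m\int_{\O_2}|\Bergman_2(w,\zeta)|^2\,dV(w) = m\,\Bergman_2(\zeta,\zeta) < \infty$, so it too lies in $A^2(\O_1)$. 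Since two elements of $A^2(\O_1)$ coincide once their inner products against every $\psi \in A^2(\O_1)$ agree, it suffices to match these pairings.

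I would compute the pairing against the right-hand side directly. Using the reproducing property $\inner{\psi, \Bergman_1(\cdot, w)} = \psi(w)$ and conjugating out the scalar factors, one obtains $\inner{\psi, \text{RHS}} = \sum_{k=1}^m \psi(f_k^{-1}(\zeta))\, J_{\CC}f_k^{-1}(\zeta)$.

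The heart of the argument is to show that the pairing against the left-hand side yields the same sum. Here I would introduce the pullback operator $f^*\colon A^2(\O_2) \to A^2(\O_1)$, $f^* h := (h\circ f)\,J_{\CC}f$, so that the left-hand side is exactly $f^*\Bergman_2(\cdot,\zeta)$, together with its formal adjoint, the averaging operator $(\Lambda\psi)(\zeta) := \sum_{k=1}^m \psi(f_k^{-1}(\zeta))\,J_{\CC}f_k^{-1}(\zeta)$. The adjoint identity $\inner{\Lambda\psi, h}_{\O_2} = \inner{\psi, f^*h}_{\O_1}$ is obtained by splitting $\int_{\O_1}$ over the sheets of the covering, substituting $w = f(z)$ on each sheet so that $dV(z) = |J_{\CC}f_k^{-1}(w)|^2\,dV(w)$, and invoking the chain-rule identity $J_{\CC}f(f_k^{-1}(w))\,J_{\CC}f_k^{-1}(w) = 1$; the Jacobian factors collapse via $\overline{J_{\CC}f(f_k^{-1}(w))}\,|J_{\CC}f_k^{-1}(w)|^2 = J_{\CC}f_k^{-1}(w)$. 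Taking $h = \Bergman_2(\cdot,\zeta)$ and applying the reproducing property on $\O_2$ then gives $\inner{\psi, \text{LHS}} = \inner{\Lambda\psi, \Bergman_2(\cdot,\zeta)}_{\O_2} = (\Lambda\psi)(\zeta) = \sum_{k=1}^m \psi(f_k^{-1}(\zeta))\,J_{\CC}f_k^{-1}(\zeta)$, which matches the previous paragraph and completes the proof.

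The main obstacle is the well-definedness of $\Lambda\psi$: the local inverses $f_k^{-1}$ are defined only up to monodromy on $\O_2\setminus V$, and to invoke the reproducing property on $\O_2$ I need $\Lambda\psi$ to be a genuine element of $A^2(\O_2)$, hence holomorphic across the branch locus $V$. I would argue that the sum over the full fiber is invariant under the permutation of branches induced by analytic continuation, so $\Lambda\psi$ is single-valued and holomorphic on $\O_2\setminus V$, and that it is locally bounded there by properness of $f$; since $V$ is a complex-analytic (in particular measure-zero) subvariety, a Riemann-type removable-singularity theorem extends $\Lambda\psi$ holomorphically across $V$, and its square-integrability follows from the change-of-variables bound above. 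Verifying this extension carefully, together with the legitimacy of the sheet-wise change of variables near $V$, is the only genuinely delicate point; the remaining manipulations are the formal Jacobian bookkeeping indicated above.
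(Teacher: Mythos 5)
The paper offers no proof of this statement: it is quoted verbatim as Theorem 1 of Bell's paper \cite{bell1982bergman}, so the only meaningful comparison is with Bell's original argument --- and your proposal is essentially a faithful reconstruction of it. Testing both sides against an arbitrary $\psi \in A^2(\O_1)$, introducing the pullback $f^*h = (h\circ f)\,J_{\CC}f$ and the sheet-averaging operator $\Lambda$, and establishing the adjoint identity $\inner{\Lambda\psi, h}_{\O_2} = \inner{\psi, f^*h}_{\O_1}$ via the sheet-wise change of variables and the collapse $\overline{J_{\CC}f(f_k^{-1}(w))}\,|J_{\CC}f_k^{-1}(w)|^2 = J_{\CC}f_k^{-1}(w)$ is exactly Bell's mechanism, and your Jacobian bookkeeping is correct throughout.

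One step is misstated, though it is self-repairing from material you already have. You justify the holomorphic extension of $\Lambda\psi$ across $V$ by claiming it is locally bounded ``by properness of $f$.'' Properness only keeps the fiber points $f_k^{-1}(w)$ in a compact subset of $\O_1$, hence controls $\psi(f_k^{-1}(w))$; it does nothing to control the weights $J_{\CC}f_k^{-1}(w) = 1/J_{\CC}f(f_k^{-1}(w))$, which blow up as $w$ approaches $V$ because $J_{\CC}f$ vanishes on the branch locus. The individual terms of $\Lambda\psi$ are genuinely unbounded near $V$ in general, and the boundedness of the symmetrized sum is not a direct consequence of properness --- it is only known \emph{after} the extension. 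The correct route, which is also Bell's, is the one your last sentence almost states: Cauchy--Schwarz gives $|\Lambda\psi(w)|^2 \le m \sum_k |\psi(f_k^{-1}(w))|^2\,|J_{\CC}f_k^{-1}(w)|^2$, and the change of variables then yields $\norm{\Lambda\psi}_{\O_2}^2 \le m\,\norm{\psi}_{\O_1}^2$; since a holomorphic function on $\O_2 \setminus V$ that is locally $L^2$ near the analytic subvariety $V$ extends holomorphically across it (the $L^2$ removable-singularity theorem, not the bounded Riemann version), $\Lambda\psi \in A^2(\O_2)$ and the reproducing pairing $\inner{\Lambda\psi, \Bergman_2(\cdot,\zeta)}_{\O_2} = \Lambda\psi(\zeta)$ is legitimate. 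With that substitution your argument is complete and coincides with the proof in the cited source.
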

	
	\begin{proof}[Proof of Proposition \ref{prop: comparison btw P_1 and P_2}]
		By Theorem \ref{thm: Bells transformation rule} and the following Cauchy-Schwarz lemma
		$$ \left|\sum_{k=1}^m a_k \right|^2 \le m \sum_{k=1}^m |a_k|^2 $$
		for $a_k \in \CC$, we have
		\begin{align} \label{4.1010}
			|J_{\CC}f(z)|^2 |\Bergman_2(f(z), \zeta)|^2 
			= \left| \sum_{k=1}^m \Bergman_1(z, f_k^{-1}(\zeta)) \overline{J_{\CC}f_k^{-1}(\zeta)} \right|^2
			\le m \sum_{k=1}^m |\Bergman_1(z, f_k^{-1}(\zeta))|^2 |J_{\CC}f_k^{-1}(\zeta)|^2 .
		\end{align}
		Then
		\begin{align*}
			(\kappa \circ \Phi_1)(z) 
			&= \kappa( P_1(z, \xi)dV(\xi)) \\
			&= \sum_{k=1}^m \frac{|\Bergman_1(z, f_k^{-1}(\zeta))|^2 |J_{\CC}f_k^{-1}(\zeta)|^2}{\Bergman_1(z,z)} dV(\zeta) \\
			&\ge \frac{ |\Bergman_2(f(z), \zeta)|^2 |J_{\CC}f(z)|^2 }{m \Bergman_1(z,z)} dV(\zeta) \\
			&= \frac{\Bergman_2(f(z),f(z)) |J_{\CC}f(z)|^2 }{m \Bergman_1(z,z)} (\Phi_2 \circ f)(z).
		\end{align*}
		From (\ref{4.1010}), the equality holds if and only if (\ref{4.10}).
	\end{proof}
	
		

	\begin{cor}
		Let $f : \O_1 \rightarrow \O_2$ be a proper holomorphic function. 
		If $f$ is injective, then the diagram (\ref{eq:diagram rectangle}) commutes, i.e., $\kappa \circ \Phi_1 = \Phi_2 \circ f$. 
		\begin{proof}
			If $f$ is injective, i.e., $m=1$, then the condition (\ref{4.10}) is satisfied and 
			$\Bergman_1(z,z) = \Bergman_2(f(z),f(z)) |J_{\CC}f(z)|^2  $ by the transformation rule.
			Hence, Proposition \ref{prop: comparison btw P_1 and P_2} implies that $\kappa \circ \Phi_1 = \Phi_2 \circ f.$
		\end{proof}
	\end{cor}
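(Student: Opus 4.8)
The plan is to deduce this commutativity as a direct specialization of Proposition~\ref{prop: comparison btw P_1 and P_2} to the case $m=1$. Since $f$ is proper, surjective, and injective, the covering map in (\ref{f locally invertible}) is single-sheeted, so there is exactly one local inverse branch $f_1^{-1}=f^{-1}$ and $m=1$.

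First I would note that, with only one branch, the equality condition (\ref{4.10}) of Proposition~\ref{prop: comparison btw P_1 and P_2} --- which demands that the quantities $\Bergman_1(z,f_k^{-1}(\zeta))\overline{J_{\CC}f_k^{-1}(\zeta)}$ coincide over $k=1,\dots,m$ --- holds vacuously. Hence the inequality of that proposition collapses to the equality
\begin{equation*}
    (\kappa \circ \Phi_1)(z) = \frac{\Bergman_2(f(z),f(z))\,|J_{\CC}f(z)|^2}{\Bergman_1(z,z)}\,(\Phi_2 \circ f)(z),
\end{equation*}
so it remains only to check that the scalar coefficient equals $1$.

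For this I would invoke Bell's transformation rule (Theorem~\ref{thm: Bells transformation rule}) with $m=1$, namely $J_{\CC}f(z)\,\Bergman_2(f(z),\zeta) = \Bergman_1(z,f^{-1}(\zeta))\overline{J_{\CC}f^{-1}(\zeta)}$, and evaluate at $\zeta = f(z)$, where $f^{-1}(\zeta)=z$. Using the chain-rule identity $J_{\CC}f^{-1}(f(z)) = 1/J_{\CC}f(z)$ and multiplying through by $\overline{J_{\CC}f(z)}$ then yields $\Bergman_1(z,z) = \Bergman_2(f(z),f(z))\,|J_{\CC}f(z)|^2$, so the coefficient is indeed $1$ and the two measures agree; the diagram (\ref{eq:diagram rectangle}) therefore commutes. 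Since the corollary is essentially a reading of the preceding proposition in the injective case, I anticipate no serious obstacle: the only delicate point is verifying that the scaling factor is exactly $1$ rather than a mere constant, which is precisely where the transformation law and the Jacobian inversion enter.
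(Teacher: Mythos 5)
Your proposal is correct and follows essentially the same route as the paper: specialize Proposition~\ref{prop: comparison btw P_1 and P_2} to $m=1$ (so the equality condition (\ref{4.10}) holds trivially) and use Bell's transformation rule to see that the scalar factor $\Bergman_2(f(z),f(z))|J_{\CC}f(z)|^2/\Bergman_1(z,z)$ equals $1$. Your explicit verification of that identity by evaluating the rule at $\zeta=f(z)$ with $J_{\CC}f^{-1}(f(z))=1/J_{\CC}f(z)$ is just a fleshed-out version of the step the paper states without detail.
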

	
	\begin{rmk} \label{rmk: decreasing property of Bergman metric??}
		If the diagram (\ref{eq:diagram rectangle}) commutes, then $f^* g_{B_2} = (\Phi_2 \circ f)^* g_{F_2} = (\kappa \circ \Phi_1)^*g_{F_2} $.
		In this case, combining with Theorem \ref{thm: decreasing property of Fisher metric applying to Bergman}, we have the decreasing property for the Bergman metric, i.e., $f^* g_{B_2} \le  g_{B_1}$, which does not hold in general.  However, we do not know other sufficient conditions for commuting the diagram except for the injectivity of $f$.
	\end{rmk}

\vspace{5mm}
	
\section{Central limit theorem of Fr\'echet sample mean} \label{sec: asymptotic normality with the Bergman metric}

In this section, we prove consistency and the central limit theorem of the Fr\'echet sample mean replacing the squared geodesic distance with the diastasis function $\Dia$ for $g_B$ (Definition~\ref{def: diastasis function}).

    For a bounded domain $\Omega \subset \mathbb{C}^n$, let $\Phi: \O \rightarrow \Prob(\O, dV)$ be the Bergman statistical model defined by (\ref{def: map into P(D)}).
    For fixed $z_0 \in \Omega$, let $\{ Z_i \}_{i=1}^{\infty}$ be a sequence of independent and identically distributed (i.i.d.) random vectors drawn from the probability distribution 
    \[
    \Phi(z_0) = P(z_0, \xi) dV(\xi) = \frac{|\Bergman(z_0,\xi)|^2}{\Bergman(z_0,z_0)} dV(\xi).
    \]
	
	\begin{thm} [Existence and Consistency]  \label{thm: argmin converges a.s.} 
		Suppose that $\O$ satisfies Condition $C_6$ (Definition~\ref{def: Condition C_k}). Then, with probability tending to 1 as $m \rightarrow \infty$, there exists a sequence of random vectors
        \[
            \hat{z}_m := \arg \operatorname{loc} \min_{z \in \Omega} \sum_{i=1}^{m} \Dia(z, Z_i)
        \]
        such that  
		$$ \hat{z}_m  \xrightarrow{\quad p \quad} z_0 $$
		in probability as $m \rightarrow \infty$.
	\end{thm}
        \begin{rmk}
            The precise meaning for ``with probability tending to 1 as $m \rightarrow \infty$'' in Theorem~\ref{thm: argmin converges a.s.} is as follows:
            for a sequence of random vectors $Z_i$ from a probability space $(S, \mathcal{F}, P)$ to $\O$,
             \[
                \lim_{m \rightarrow \infty} P\left( \left\{ s \in S \,\,\middle\vert\,\, \operatorname{loc} \min_{z \in \Omega} \sum_{i=1}^{m} \Dia(z, Z_i(s))  \text{ exists } \right\} \right) = 1.
             \]
        \end{rmk}
    
	\begin{proof}[Proof of Theorem~\ref{thm: argmin converges a.s.}]
		We basically follow the argument of (\cite[Theorem 5.1 in Section 6]{LehmannELCasellaGeorge98}). 
		However, we do not assume the boundedness of third derivatives of $\log P(z, \xi)$. 
		Instead, we use the fact that 
		$\Exp[\partial_{\alpha}\partial_{\beta}\partial_{\overline{\gamma}} \log P(z_0,\xi)] = - \partial_{\beta} g_{\alpha \overline{\gamma}}(z_0)$ is finite (Lemma \ref{lem: 3-tensors identities}).
		Let
		\begin{equation*}
			L(z, Z) := L(z, Z_1, \cdots, Z_m) := \sum_{i=1}^m \Dia(z, Z_i).
		\end{equation*}
		Let $S_r$ be the (Euclidean) sphere of radius $r$ with the center at $z_0$. \\
		
		\noindent
		{\bf Claim: } $L(z, Z) > L(z_0, Z)$ on $S_r$ for sufficiently small $r>0$ with probability tending to 1.
		\begin{proof} [Proof of the claim]
			Consider the Taylor series expansion of $\frac{1}{m} L(z,Z) - \frac{1}{m} L(z_0, Z)$ near $z_0$ as follows.
			For $z \in S_r$,
			\begin{align*}
				\frac{1}{m} L(z,Z) - \frac{1}{m} L(z_0, Z)
				&= \frac{1}{m} \sum_{A} \frac{\partial L(z_0)}{\partial z_{A}} (z_{A} - z_{0, A}) \\
				&+ \frac{1}{2m} \sum_{A,B} \frac{\partial^2 L(z_0)}{\partial z_{A} \partial z_{B}} (z_{A} - z_{0, A}) (z_{B} - z_{0, B})  \\
				&+ \frac{1}{6m} \sum_{A,B,C} \frac{\partial^3 L(z^*)}{\partial z_{A} \partial z_{B} \partial z_{C}} (z_{A} - z_{0, A}) (z_{B} - z_{0, B}) (z_{C} - z_{0, C})   \\
				&=: Q_1 + Q_2 + Q_3.
			\end{align*}
			where $A, B, C = 1, \cdots, n, \overline{1}, \cdots, \overline{n}$, 
			$z = (z_1, \cdots, z_n)$, $z_0 = (z_{0,1}, \cdots, z_{0,n})$, $z_{\overline{\alpha}} = \overline{z}_{\alpha}$, $z_{0,\overline{\alpha}} = \overline{z}_{0,\alpha}$ for $\alpha = 1, \cdots, n$, and $z^*$ is on the line segment connecting $z$ and $z_0$. 
			Note that $\Dia(z, \xi) = - \log P(z,\xi) + \log \Bergman(\xi, \xi)$ and, by Lemma \ref{lem: 3-tensors identities} and \ref{lem: 4-tensors identities},  
			\begin{align*}
				\Exp\left[ \frac{\partial \Dia(z_0, Z_1)}{\partial z_A}  \right] 
				&= \int_{\O} \frac{\partial \Dia(z_0, \xi)}{\partial z_A}  P(z_0, \xi) dV(\xi) = 0, \\
				\Exp\left[ \frac{\partial^2 \Dia(z_0, Z_1)}{\partial z_A \partial z_B}  \right] 
				&= \int_{\O} \frac{\partial^2 \Dia(z_0, \xi)}{\partial z_A \partial z_B}  P(z_0, \xi) dV(\xi) = 
				\begin{cases}
					g_{\alpha \overline{\beta}}(z_0) \quad &(A=\alpha, B=\overline{\beta}),  \\
					0                                \quad &(\text{otherwise}),
				\end{cases} \\
				\Exp\left[ \frac{\partial^3 \Dia(z^*, Z_1)}{\partial z_A \partial z_B \partial z_C}  \right] 
				&= \int_{\O} \frac{\partial^3 \Dia(z^*, \xi)}{\partial z_A \partial z_B \partial z_C}  P(z_0, \xi) dV(\xi) 
				\rightarrow
				\begin{cases}
					0   \quad &(A=\alpha, B=\beta, C=\gamma), \\
					0   \quad &(A=\overline{\alpha}, B=\overline{\beta}, C=\overline{\gamma}), \\
					\frac{\partial}{\partial z_{\beta}} g_{\alpha \overline{\gamma}}(z_0)   \quad &(A=\alpha, B=\beta, C=\overline{\gamma}),
				\end{cases} 
			\end{align*}
			since $z^* \rightarrow z_0$ as $m \rightarrow \infty$, 
			where $g_B = \sum_{\alpha, \beta} g_{\alpha \overline{\beta}} dz_{\alpha} \otimes d\overline{z}_{\beta}  $ is the Bergman metric.
			Then by the law of large numbers (Theorem \ref{thm: Strong law})
			\begin{align}
				\frac{1}{m} \frac{\partial L(z_0)}{\partial z_A} &\longrightarrow  0,  \label{5.22} \\
				\frac{1}{m} \frac{\partial^2 L(z_0)}{\partial z_A \partial z_B} &\longrightarrow  
				\begin{cases}
					g_{\alpha \overline{\beta}}(z_0) \quad &(A=\alpha, B=\overline{\beta}), \label{5.33} \\
					0                                \quad &(\text{otherwise}),
				\end{cases} \\
				\frac{1}{m} \frac{\partial^3 L(z^*)}{\partial z_A \partial z_B \partial z_C} &\longrightarrow 
				\begin{cases}
					0   \quad &(A=\alpha, B=\beta, C=\gamma), \\
					0   \quad &(A=\overline{\alpha}, B=\overline{\beta}, C=\overline{\gamma}), \\
					\frac{\partial}{\partial z_{\beta}} g_{\alpha \overline{\gamma}}(z_0)   \quad &(A=\alpha, B=\beta, C=\overline{\gamma}),
				\end{cases}  \label{5.44}
			\end{align}
			in probability as $m \rightarrow \infty$.
			
			Now, for any given $r>0$, (\ref{5.22}) implies that $|Q_1| < a r^3$ for some $a>0$ with probability tending to 1.
			Also (\ref{5.33}) implies that $Q_2 > b r^2$ for some $b >0$ with probability tending to 1 because $g_{\alpha \overline{\beta}}(z_0) > 0$, and
			$|Q_3| < c r^3$ for some $c>0$ with probability tending to 1 follows from (\ref{5.44}) because $\frac{\partial}{\partial z_{\beta}} g_{\alpha \overline{\gamma}}(z_0)$ is finite.
			Therefore, 
			\begin{align*}
				\frac{1}{m} L(z,Z) - \frac{1}{m} L(z_0, Z) 
				&= Q_1 + Q_2 + Q_3 
				\ge -|Q_1| + Q_2 -|Q_3| \\
				&> -ar^3 + br^2 - cr^3 
				= br^2 - (a+c)r^3 > 0
			\end{align*}
			for sufficiently small $r>0$ with probability tending to 1.
		\end{proof}
		Since, for any $r>0$ small enough, $L(z, Z) > L(z_0, Z)$ for all $z \in S_r$ with probability tending to 1, we can choose a local minimum $\hat{z}_m$ of $L(z, Z)$ inside of $S_r$ such that $\hat{z}_m$ is measurable (see \cite[Problem 3.29 in Section 6]{LehmannELCasellaGeorge98} ) and 
        $\hat{z}_m \rightarrow z_0$ in probability as $m \rightarrow \infty$.
	\end{proof}
	
	\begin{thm}[Central Limit Theorem] \label{thm: CTL for Bergman metric}
            Suppose that $\O$ satisfies Condition $C_6$ (Definition~\ref{def: Condition C_k}). Then for the sequence $\{ \hat{z}_m \}_{m=1}^{\infty}$ in Theorem~\ref{thm: argmin converges a.s.},
    \[
    \sqrt{m} \left( \hat{z}_m - z_0 \right) \xrightarrow{\quad d \quad} \mathcal{N}_{\mathbb{C}}(0, g_B(z_0)^{-1},0)
    \]
    in distribution as $m \to \infty$, where the complex normal distribution $\mathcal{N}_{\CC}(0, g_B(z_0)^{-1},0)$ is defined by equation (\ref{def: complex normal distribution when R=0}).
	\end{thm}

	\begin{proof}
		Let
		\begin{equation*}
			L(z, Z) := L(z, Z_1, \cdots, Z_m) := \sum_{i=1}^m \Dia(z, Z_i).
		\end{equation*}
		For each $1 \le \beta \le n$, the first order Taylor series expansion of $\frac{\partial L}{\partial \overline{z}_{\beta}}$ at $z_0$ yields
		\begin{equation*}
			\frac{\partial L}{\partial \overline{z}_{\beta}}(z) 
			= \frac{\partial L}{\partial \overline{z}_{\beta}}(z_0) 
			+ \sum_{\alpha=1}^n \frac{\partial^2 L}{\partial z_{\alpha} \partial \overline{z}_{\beta}}(z^*) (z_{\alpha} - z_{0,\alpha}) 
			+ \sum_{\alpha=1}^n \frac{\partial^2 L}{\partial \overline{z}_{\alpha} \partial \overline{z}_{\beta}}(z^*) (\overline{z}_{\alpha} - \overline{z}_{0,\alpha}),
		\end{equation*}
		where $z = (z_1, \cdots, z_n)$, $z_0 = (z_{0,1}, \cdots, z_{0,n})$, and $z^*$ is on the line segment connecting $z$ and $z_0$. Since $\hat{z}_m  \rightarrow z_0$ by Theorem~\ref{thm: argmin converges a.s.}, we may let $z = \hat{z}_m$. Note that the left-hand side $\frac{\partial L}{\partial \overline{z}_{\beta}}(\hat{z}_m)=0$ because $L$ attains a local minimum value at $\hat{z}_m$. 
		Then, considering $\left( \hat{z}_m - z_0 \right)$ as a $(n \times 1)$ column vector,
		\begin{equation*}
			0 = \left( \frac{\partial L}{\partial \overline{z}_{\beta}}(z_0) \right)_{1 \le \beta \le n} 
			+ \left( \hat{z}_m - z_0 \right) \left( \frac{\partial^2 L(z^*)}{\partial z_{\alpha} \partial \overline{z}_{\beta}} \right)_{1 \le \alpha, \beta \le n}
			+ \left( \overline{\hat{z}}_m - \overline{z}_0 \right)  \left( \frac{\partial^2 L(z^*)}{\partial \overline{z}_{\alpha} \partial \overline{z}_{\beta}} \right)_{1 \le \alpha, \beta \le n}. 
		\end{equation*}
		Consequently, 
		\begin{align} \label{proof 4.2}
			\sqrt{m} \left( \hat{z}_m - z_0 \right)
			= &- \left( \frac{1}{\sqrt{m}} \frac{\partial L}{\partial \overline{z}_{\beta}}(z_0) \right)_{1 \le \beta \le n}
			\left( \frac{1}{m} \frac{\partial^2 L(z^*)}{\partial z_{\alpha} \partial \overline{z}_{\beta}} \right)^{-1}_{1 \le \alpha, \beta \le n} \\
			&- \sqrt{m} \left( \overline{\hat{z}}_m - \overline{z}_0 \right)  
			\left( \frac{1}{m} \frac{\partial^2 L(z^*)}{\partial \overline{z}_{\alpha} \partial \overline{z}_{\beta}} \right)_{1 \le \alpha, \beta \le n}
			\left( \frac{1}{m} \frac{\partial^2 L(z^*)}{\partial z_{\alpha} \partial \overline{z}_{\beta}} \right)^{-1}_{1 \le \alpha, \beta \le n}. \nonumber
		\end{align}
		Now, consider the following $\CC$-valued random matrices 
		$X_{i} := \left( \frac{\partial^2 \Dia(z^*, Z_i)}{\partial z_{\alpha} \partial \overline{z}_{\beta}} \right)_{\alpha\beta} $, 
		$Y_{i} := \left( \frac{\partial^2 \Dia(z^*, Z_i)}{\partial \overline{z}_{\alpha} \partial \overline{z}_{\beta}} \right)_{\alpha\beta}$ and
		$W_{i} := \left( \frac{\partial \Dia(z_0, Z_i)}{\partial \overline{z}_{\beta}} \right)_{\beta}$. Since
		\begin{align*}
			\Exp[X_i] &= \left( \int_{\O} \frac{\partial^2 \Dia(z^*, \xi)}{\partial z_{\alpha} \partial \overline{z}_{\beta}} P(z_0, \xi) dV(\xi)  \right)_{\alpha\beta} = g_B(z^*), \\
			\Exp[Y_i] &= \left( \int_{\O} \frac{\partial^2 \Dia(z^*, \xi)}{\partial \overline{z}_{\alpha} \partial \overline{z}_{\beta}} P(z_0, \xi) dV(\xi)  \right)_{\alpha\beta}
			= \left( - \int_{\O} \frac{\partial^2 \log \Poisson(z^*, \xi)}{\partial \overline{z}_{\alpha} \partial \overline{z}_{\beta}} P(z_0, \xi) dV(\xi)  \right)_{\alpha\beta}  \rightarrow {0},
		\end{align*}
		by (2) in Lemma~\ref{lem: 3-tensors identities} and $z^{*} \rightarrow z_0$, the strong law of large numbers (Theorem~\ref{thm: Strong law}) implies that 
		\begin{align} \label{proof 4.3}
			\left( \frac{1}{m} \frac{\partial^2 L(z^*)}{\partial z_{\alpha} \partial \overline{z}_{\beta}} \right)_{\alpha\beta} \xrightarrow{ d }  g_B(z_0)
			\quad \text{ and } \quad
			\left( \frac{1}{m} \frac{\partial^2 L(z^*)}{\partial \overline{z}_{\alpha} \partial \overline{z}_{\beta}} \right)_{\alpha\beta} \xrightarrow{ d } 0.
		\end{align}
		Moreover, 
		\begin{align*}
			\Exp[W_i] = \left( \int_{\O} \frac{\partial \Dia(z_0, \xi)}{\partial \overline{z}_{\beta}} P(z_0, \xi) dV(\xi)  \right)_{\beta}
			= \left( - \int_{\O} \frac{\partial \log \Poisson(z_0, \xi)}{\partial \overline{z}_{\beta}} P(z_0, \xi) dV(\xi)  \right)_{\beta} = 0,
		\end{align*}
		and the covariance matrix of $W_i$ and the relation matrix are
		\begin{align*}
			\text{Cor}[W_i, W_i] 
			&= \Exp[(W_i)(\overline{W_i})^{\top}] - \Exp[(W_i)] \overline{\Exp[(W_i)]}^{\top} \\
			&= \left( \int_{\O} \frac{\partial \Dia(z_0, \xi)}{\partial \overline{z}_{\beta}} \frac{\partial \Dia(z_0, \xi)}{\partial z_{\alpha}} P(z_0, \xi) dV(\xi)  \right)_{\beta\alpha} \\
			&= \left( \int_{\O} \frac{\partial \log \Poisson(z_0, \xi)}{\partial \overline{z}_{\beta}} \frac{\partial \log \Poisson(z_0, \xi)}{\partial z_{\alpha}} P(z_0, \xi) dV(\xi)  \right)_{\beta\alpha} \\
			&= \overline{g_B(z_0)}^{\top} = g_B(z_0) \\
			\text{Rel}[W_i, W_i]
			&= \Exp[(W_i)(W_i)^{\top}] - \Exp[(W_i)] \Exp[(W_i)]^{\top} = 0
		\end{align*}
		by Lemma~\ref{lem: 3-tensors identities}. Therefore, the Central Limit Theorem (Theorem~\ref{thm: Central limit thm}) implies that 
		\begin{equation} \label{proof 4.4}
			\left( \frac{1}{\sqrt{m}} \frac{\partial L}{\partial \overline{z}_{\beta}}(z_0) \right)_{\beta} 
			\xrightarrow{\quad d \quad} 
			\mathcal{N}_{\CC}(0, g_B(z_0),0). 
		\end{equation}
		Altogether (\ref{proof 4.2}), (\ref{proof 4.3}) and (\ref{proof 4.4}) with Slutsky's theorem (Theorem~\ref{thm: Slutshky}), we conclude that
		\begin{align*}
			\sqrt{m} \left(\hat{z}_m - z_0 \right) 	
			\xrightarrow{\quad d \quad} {g_B(z_0)}^{-1} \mathcal{N}_{\CC}(0, g_B(z_0),0) = \mathcal{N}_{\CC}(0, {g_B(z_0)}^{-1},0). 
		\end{align*}
		as $m \rightarrow \infty$.
	\end{proof}
    
	 As a consequence of Theorem~\ref{thm: argmin converges a.s.} and \ref{thm: CTL for Bergman metric}, we obtain the following corollary.
	\begin{cor} \label{cor: CLT for bounded symmetric domain}
		Let $\O \subset \CC^n$ be a bounded Hermitian symmetric domain.
		Then, with probability tending to 1 as $m \rightarrow \infty$, there exists a sequence random vectors
        \[
            \hat{z}_m := \arg \operatorname{loc} \min_{z \in \Omega} \sum_{i=1}^{m} \Dia(z, Z_i)
        \]
        such that  
		$$ \hat{z}_m  \xrightarrow{\quad p \quad} z_0 $$
		in probability and
		\begin{equation*}
			\sqrt{m}\left( \hat{z}_m - z_0  \right) \xrightarrow{\quad d \quad} \mathcal{N}_{\CC}(0, g_B(z_0)^{-1},0)
		\end{equation*}
		in distribution as $m \rightarrow \infty$.
	\end{cor} 
	\begin{proof}
		From Theorem~\ref{prop: symmetric domain satisfies Condition C_k}, $\O$ satisfies Condition $C_k$ for all $k \ge 1$.
	\end{proof}

\vspace{5mm}  
	
	\section{Appendix A. Information Geometry}  \label{sec: Appendix A}

	\subsection{$\alpha$-connection}

        Let $\Phi: M \rightarrow \Prob(\Xi, \mu_0)$ be a $3$-integrable statistical model (Definition~\ref{def: parametrized measure model}) with the density function 
        $P(x,\xi) = \frac{d\Phi(x)}{d\mu_0}$,
        and $g_F$ be the Fisher information metric (Definition~\ref{def: Fisher information metric}). 
	Denote by $l = l(x,\xi) := \log P(x,\xi)$, $\partial_{i} := \frac{\partial}{\partial x_{i}}$, where $( x_1, \cdots, x_n )$ is a coordinate system of $M$,
	and $\Exp_{x} [f] := \int_{\Xi} f(\xi) P(x, \xi) d\mu_0(\xi) $.\\
	
	 In general, $M$ naturally admits the following 1-parameter family of torsion free affine connections.
	
	\begin{defn}[$\alpha$-connection] \label{def: alpha connection}
		For $\alpha \in \RR$, the {\it $\alpha$-connection} $\nabla^{(\alpha)}$ on $M$ is an affine connection defined by 
		\begin{equation*}
			g_F \left( \nabla_{\partial_{i}}^{(\alpha)} \partial_{j}, \partial_{k}  \right)
			= \Gamma_{i j, k}^{(\alpha)},
		\end{equation*}
		where 
		\begin{align*}
			\left(\Gamma_{i j, k}^{(\alpha)}\right)_{x} 
			:= \Exp_{x} [ (\partial_{i} \partial_{j} l)(\partial_{k} l) ]
			+ \frac{1-\alpha}{2} \Exp_{x} [ (\partial_{i} l)(\partial_{j} l)(\partial_{k} l) ].
		\end{align*}
	\end{defn}
	
	\begin{rmk}
		$\nabla^{(0)}$ is the Levi-Civita connection with respect to $g_F$. In general, when $\alpha \neq 0$, $\nabla^{(\alpha)}$ is not the Levi-Civita connection. However, Theorem \ref{thm: pullback of Information--geometric concepts} says that $ \nabla^{(\alpha)}$ is the Levi-Civita connection for all $\alpha \in \RR$ for the Bergman statistical model.
	\end{rmk}

	\begin{defn}[Amari-Chentsov tensor] \label{def: Amari-Chentsov tensor}
		The {\it Amari-Chentsov tensor} $T=\left\{T_{i j k} \right\}$ is a totally symmetric $3$-tensor defined by
		$$
		(T_{i j k})_x :=  \Exp_x [(\partial_{i} l)(\partial_{j} l)(\partial_{k} l)].
		$$
	\end{defn}
	
	$T$ being totally symmetric means that $T_{i j k}=T_{\sigma(i) \sigma(j) \sigma(k)}$ for any permutation $\sigma$. 
	The relationship between the $\alpha$-connection and the $\beta$-connection is given by
	$$
	\Gamma_{i j, k}^{(\beta)}=\Gamma_{i j, k}^{(\alpha)}+\frac{\alpha-\beta}{2} T_{i j k}.
	$$
	We also have
	\begin{align*}
		\nabla^{(\alpha)} & =(1-\alpha) \nabla^{(0)}+\alpha \nabla^{(1)} \\
		& =\frac{1+\alpha}{2} \nabla^{(1)}+\frac{1-\alpha}{2} \nabla^{(-1)}.
	\end{align*}
	
	
	If a statistical model $\Phi: M \rightarrow \Prob(\Xi, \mu_0)$ with $\dim_{\RR} M = n$ can be expressed in terms of functions $\left\{C, F_{1}, \cdots, F_{n}\right\}$ on $\Xi$ and a function $\psi$ on $M$ as
	$$
	    P(x, \xi) = \exp \left[C(\xi)+\sum_{i=1}^{n} x_i F_{i}(\xi)-\psi(x)\right],
	$$
	then we say that $M$ is an {\it exponential family}. The map $\Phi$ is one-to-one if and only if the $n+1$ functions $\left\{F_{1}, \cdots, F_{n}, 1\right\}$ are linearly independent, where 1 denotes the constant function which identically takes the value 1.
	Many statistical models including the set of all normal distributions belong to an exponential family.

\vspace{2mm}  	
	\subsection{Divergence}
	
	Let $M$ be a smooth manifold.
	
	\begin{defn}[Divergence]
		A divergence on $M$ is a smooth function $D: M \times M \rightarrow \RR$ satisfying the following properties:
		\begin{enumerate}
			\item $D(p,q) \geq 0$ for all $p,q \in M$, and $D(p,q) = 0$ if and only if $p=q$,
			\item $g_{ij}^{(D)}(p) := - \frac{\partial}{\partial x_i} \frac{\partial}{\partial y_j} D(p,p) $  is positive definite for all $p \in M$,
		\end{enumerate}
		where $\frac{\partial}{\partial x_i}$ is applied to the first variable of $D$ while $\frac{\partial}{\partial y_i}$ to the second variable.
	\end{defn}
	Note that a divergence is not symmetric in general, i.e., $D(p,q) \neq D(q,p)$ and it follows from the condition (1) above that
	$$  \frac{\partial}{\partial x_i} D(p,p) =  \frac{\partial}{\partial y_i} D(p,p) = 0, $$
	$$  \frac{\partial^2}{\partial x_i \partial x_j} D(p,p) =  \frac{\partial^2}{\partial y_i \partial y_j} D(p,p) =  -\frac{\partial}{\partial x_i} \frac{\partial}{\partial y_j} D(p,p)   $$
	for all $p \in M$ because $D(x,p)$ attains a minimum at $x=p$.
	
	Given a divergence $D$, it induces a Riemannian metric $g_{ij}^{(D)}$ and an affine connection $\nabla^{(D)}$ whose the Christoffel symbols are given by
	$$ \Gamma_{i j, k}^{(D)}(p) := - \frac{\partial^2}{\partial x_i \partial x_j} \frac{\partial}{\partial y_k} D(p,p).  $$
	We have the following third order approximation of the divergence $D$:
	\begin{equation*}
		D(x,q) = \frac{1}{2} g_{ij}^{(D)}(q) \Delta x_i \Delta x_j + 
		\frac{1}{6} h_{ijk}^{(D)}(q) \Delta x_i \Delta x_j \Delta x_k + o(\norm{\Delta x}^3),
	\end{equation*}
	where 
	$$ h_{ijk}^{(D)}(q) := \frac{\partial^3}{\partial x_i \partial x_j \partial x_k} D(q,q) 
	= \frac{\partial}{\partial x_i} g_{jk}^{(D)}(q) + \Gamma_{jk,i}^{(D)}(q) .$$ \\

	Now, we introduce a 1-parameter family of divergences $D$ on a statistical model $\Phi: M \rightarrow \Prob(\Xi)$, called $\alpha$-divergence, such that $g^{(D)}$ and $\nabla^{(D)}$ are the Fisher information metric and the $\alpha$-connection (Definition \ref{def: alpha connection}), respectively.

	\begin{ex}[$\alpha$-divergence] \label{ex: alpha divergence}
		For $\alpha \in \RR$, let
		$$
		\chi^{(\alpha)}(x) = 
		\begin{cases}
			\frac{4}{1-\alpha^2} \left( 1 - x^{\frac{1+\alpha}{2}} \right) & (\alpha \neq \pm 1), \\
			x \log x & (\alpha = 1), \\
			- \log x & (\alpha = -1) .
		\end{cases}
		$$
		The {\it $\alpha$-divergence} $D^{(\alpha)}$ on $\Phi(M)$ is defined by
		$$
		D^{(\alpha)}(\mu_1 , \mu_2) := \int  \chi^{(\alpha)} \left(\frac{d \mu_2}{d \mu_1} \right)\mu_1 
		$$
		for $\mu_1, \mu_2 \in \Phi(M)$.
	\end{ex}
	$D^{(0)}$ and $D^{(\pm1)}$ are called the {\it Hellinger divergence} and {\it Kullback–Leibler (KL) divergence}, respectively.
	Note that $D^{(\alpha)}(\mu_1 , \mu_2) = D^{(-\alpha)}(\mu_2 , \mu_1)$ for all $\alpha \in \RR$.

	
\vspace{5mm}  	
	
	\section{Appendix B. Probability Theory with complex random variables} \label{sec: Appendix B}
	
	\subsection{Complex Random Variables and Complex Random Vectors}
	
	A probability space is a measure space $(S, \mathcal{F}, P)$ with a total measure one. Here, $S$ is usually called a sample space, $\mathcal{F}$ is a $\sigma$-algebra of subsets of $S$ where each element in $\mathcal{F}$ is called an event, and $P : \mathcal{F}  \rightarrow [0,1]$ is a probability measure. A {\it complex random variable} $X: S \rightarrow \mathbb{C}$ is a measurable function. 
	
	Given a complex random variable $Z$, it induces a probability distribution $P_Z$ on $\CC$ defined by 
	$$ P_Z(B) := P(Z^{-1}(B)) $$
	for any Borel set $B \subset \CC$.
	Then, by the Radon-Nikodym theorem, there exists a measurable function $f_Z$, called the {\it density function} of $Z$, such that 
	$$ P_Z(B) = \int_{B} f_Z(z) dV(z),$$
	where $dV(z)$ is the Lebesgue measure. 
	
	\begin{defn}
		Let $Z$ and $W$ be complex random variables. 
		\begin{enumerate}
			\item The {\it expectation} $\operatorname{E}[Z] := \int_{\CC} z f_Z(z) dV(z)$.
			\item The {\it covariance}  $\operatorname{Cov}[Z,W] := \operatorname{E} [(Z-\operatorname {E} [Z]){\overline {(W-\operatorname {E} [W])}}]=\operatorname {E} [Z{\overline {W}}]-\operatorname {E} [Z]\operatorname {E} [{\overline {W}}]$.
			\item The {\it variance} $\operatorname{Var}[Z]:=\operatorname {Cov} [Z,Z]=\operatorname {E} \left[\left|Z-\operatorname {E} [Z]\right|^{2}\right]=\operatorname {E} [|Z|^{2}]-\left|\operatorname {E} [Z]\right|^{2}.$
		\end{enumerate}
	\end{defn}
	If $\phi: \CC \rightarrow \CC$ is a measurable function, $\operatorname{E}[\phi(Z)] = \int_{\CC} \phi(z) f_Z(z) dV(z)$.
	An $n$-tuple of complex random variables $Z = (Z_1, \cdots, Z_n)$ is called an $n$-dimensional {\it complex random vector}.
	
	\begin{defn} \label{def: covariance}
		Let $Z=(Z_1, \cdots, Z_n)$ and $W$ be $n$-dimensional complex random vectors. 
		\begin{enumerate}
			\item The {\it expectation vector} $\operatorname{E}[Z] := (\operatorname{E}[Z_1], \cdots, \operatorname{E}[Z_n]) $.
			\item The {\it covariance matrix}  $\operatorname{Cov}[Z,W] := \operatorname{E}\left[(Z- \operatorname{E}[Z])(W- \operatorname{E}[W])^{\mathcal{H}}\right]$.
			\item The {\it relation matrix} $\operatorname{Rel}[Z,W]:= \operatorname{E}
   \left[(Z- \operatorname{E}[Z])(W- \Exp[W])^{\top}\right]$,
		\end{enumerate}
		where $A^{\top}$ is the transpose of $A$ and $A^{\mathcal{H}}$ is the conjugate transpose of $A$. 
	\end{defn}

	\begin{defn} 
		Let $Z, W$ be a complex random variables.
		\begin{enumerate}
			\item $Z, W$ are {\it independent} if $P(X \in A, Y \in B)=P(X \in A) P(Y \in B)$ for any Borel set $A, B \subset \CC$.
			\item $Z, W$ are {\it identically distributed} if $f_Z = f_W$.
			\item $Z, W$ are called {\it i.i.d.} if they are independent and identically distributed. 
		\end{enumerate}
	\end{defn}
	
	If $Z$ and $W$ are i.i.d., then $\operatorname{E}[Z] = \operatorname{E}[W]$, $\operatorname{Cov}[Z,Z] = \operatorname{Cov}[W,W]$ and $\operatorname{Rel}[Z,Z] = \operatorname{Rel}[W,W]$.

	\begin{defn}
		Let $\{ Z_i \}_{i=1}^{\infty}$ and $Z$ be a sequence of complex random variables and a complex random variable, respectively. 
		\begin{enumerate}
			\item $Z_i \xrightarrow{a.s.} Z$ {\it almost surely} if $Z_i$ converges to $Z$ almost everywhere.
			\item $Z_i \xrightarrow{p} Z$ {\it in probability} if $\lim_{i \rightarrow \infty}P\left( \{ s \in S : |Z_i(s)-Z(s)|\geq \epsilon\} \right)=0$ for all $\epsilon>0$.
			\item $Z_i \xrightarrow{d} Z$ {\it in distribution} if $P_{Z_i}(B)$ converges to $P_{Z}(B)$ for every Borel set $B \subset \CC$.
		\end{enumerate}
	\end{defn}
	
	Note that $(1) \Rightarrow (2) \Rightarrow (3)$. 
	Also, the convergence in probability implies the convergence on a subsequence almost surely.

\vspace{2mm}  	
	\subsection{Complex Normal Distributions}
	The multivariate complex normal distribution is introduced by Wooding \cite{Wooding1956}.
	For a complex $(n \times 1)$ column vector $z = (z_1, \cdots, z_n) \in \CC^n$, denote by $\hat{z} := (z_1, \cdots, z_n, \overline{z}_1, \cdots, \overline{z}_n) \in \CC^{2n}$.
	The multivariate {\it complex normal distribution} on $\CC^n$, denoted by $\mathcal{N}_{\CC}(\mu, \Gamma, R)$, is defined by the following density function  
	\begin{equation} \label{eq-4}
		f(z)=\frac{1}{\pi^{n}\left|\Gamma_{R}\right|^{1 / 2}} \exp \left\{-\frac{1}{2}(\hat{z}-\hat{\mu})^{\mathcal{H}} (\Gamma_{R})^{-1}(\hat{z}-\hat{\mu})\right\},
	\end{equation}
	where $\mu \in \mathbb{C}^{n}, \Gamma$ is a $(n \times n)$ Hermitian matrix, $R$ is a $(n \times n)$ symmetric matrix, 
	\begin{equation*}
		\Gamma_{R} :=\left(\begin{array}{cc}
			\Gamma & R \\
			R^{\mathcal{H}} & \Gamma^{*}
		\end{array}\right)=\left(\begin{array}{cc}
			\Gamma & R \\
			R^{*} & \Gamma^{*}
		\end{array}\right)
	\end{equation*}
	is a $(2n \times 2n)$ positive definite matrix and $\left|\Gamma_{R}\right|$ is the determinant of $\Gamma_R$.
	Here, $R^*$ and $R^{\mathcal{H}}$ denote the complex conjugate and the conjugate transpose of $R$, respectively. 
	When $R=0$, ~\eqref{eq-4} reduces to
	\begin{equation} \label{def: complex normal distribution when R=0} 
		f(z)=\frac{1}{\pi^{n}|\Gamma|} \exp \left\{-(z-\mu)^{\mathcal{H}} \Gamma^{-1}(z-\mu)\right\}.
	\end{equation}	
	
	A complex random vector $Z: S \rightarrow \CC^n$ is said to have a $n$-dimensional complex normal distribution, denoted by  $Z \sim  \mathcal{N}_{\CC}(\mu, \Gamma, R)$, if its density function $f_{Z}(z)$ is of the form (\ref{eq-4}), 
	where $\mu=\Exp[Z]$, $\Gamma={\text{Cov}}[Z,Z]=\Exp\left[(Z-\mu)(Z-\mu)^{\mathcal{H}}\right]$ and $R=\Exp\left[(Z-\mu)(Z-\mu)^{\top}\right]$. 
	Note that
	\begin{equation*}
		{\displaystyle Z\ \sim \ {\mathcal{N}}_{\CC}(\mu ,\,\Gamma ,\,R)\quad \Rightarrow \quad AZ+b\ \sim \ {\mathcal{N}}_{\CC}(A\mu +b,\,A\Gamma A^{\mathrm {H} },\,ARA^{\mathrm {T} })}
	\end{equation*}
	for a constant matrix $A$ and a vector $b$. 

 For completeness, we state the Central Limit Theorem and the Slutsky's theorem. See also \cite{ducharme2016complex}.
	
	\begin{thm}[Central Limit Theorem with complex normal distributions] \label{thm: Central limit thm}
		Let $ \{ Z_{i} \}_{i=1}^{\infty}$ be a sequence of i.i.d. $n$-dimensional complex random vectors with 
		the expectation vector $\mu = \operatorname{E}[Z_1] < \infty$, 
		the covariance matrix $\Gamma = \operatorname{Cor}[Z_1, Z_1] < \infty $ and 
		the relation matrix $R = \operatorname{Rel}[Z_1, Z_1] < \infty$.  Then
		$$
		\sqrt{m}\left(\frac{1}{m} \sum_{i=1}^{m} Z_{i} -\mu \right) \xrightarrow{d}  
		N_{\CC}\left(\mu, \Gamma, R \right)
		$$
		as $m \rightarrow \infty$.
	\end{thm}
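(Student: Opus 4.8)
The plan is to reduce the statement to the classical multivariate Central Limit Theorem for \emph{real} random vectors by identifying $\CC^n$ with $\RR^{2n}$ through real and imaginary parts. Writing $Z_i = X_i + \IM\, Y_i$ with $X_i, Y_i \in \RR^n$, I would form the real random vectors $V_i := (X_i, Y_i) \in \RR^{2n}$. These are i.i.d., and the hypotheses that $\Gamma = \operatorname{Cor}[Z_1,Z_1]$ and $R = \operatorname{Rel}[Z_1,Z_1]$ are finite guarantee that $V_1$ has finite mean $\nu := E[V_1]$ and finite real covariance matrix $\Sigma := \operatorname{Cov}[V_1, V_1] \in \RR^{2n \times 2n}$, since every second moment of the components of $V_1$ is a real or imaginary part of an entry of $\Gamma$ or $R$.

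First I would invoke the classical real multivariate CLT, which gives
\[
\sqrt{m}\left( \frac{1}{m}\sum_{i=1}^m V_i - \nu \right) \xrightarrow{d} \mathcal{N}(0, \Sigma)
\]
as $m \to \infty$, where $\mathcal{N}(0,\Sigma)$ is the centered real Gaussian on $\RR^{2n}$ with covariance $\Sigma$. Next, the $\RR$-linear isomorphism $\Psi : \RR^{2n} \to \CC^n$, $(x,y) \mapsto x + \IM\, y$, is continuous, so the continuous mapping theorem transports this convergence to
\[
\sqrt{m}\left( \frac{1}{m}\sum_{i=1}^m Z_i - \mu \right) \xrightarrow{d} \Psi_* \mathcal{N}(0,\Sigma),
\]
where $\mu = \Psi(\nu) = E[Z_1]$. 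It then remains only to identify the pushforward $\Psi_* \mathcal{N}(0,\Sigma)$ with $\mathcal{N}_{\CC}(0,\Gamma,R)$, which after recentering yields the claimed limit $\mathcal{N}_{\CC}(\mu, \Gamma, R)$.

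The main obstacle is this last identification, an exercise in linear algebra relating the real covariance $\Sigma$ --- organized into the four $n\times n$ blocks $\operatorname{Cov}[X_1,X_1]$, $\operatorname{Cov}[X_1,Y_1]$, $\operatorname{Cov}[Y_1,Y_1]$ --- to the complex data $\Gamma$ and $R$. Passing to the augmented vector $\hat{Z}_1 = (Z_1, \overline{Z_1}) \in \CC^{2n}$, one has $\operatorname{Cov}[\hat{Z}_1, \hat{Z}_1] = \Gamma_R$, the block matrix appearing in \eqref{eq-4}, while the relation $\hat{z} = T\,(x,y)^{\top}$ for the fixed invertible change-of-basis matrix $T$ converts the $(x,y)$-coordinates into the $(z,\overline{z})$-coordinates. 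I would verify the polarization identities expressing $\Gamma$ and $R$ in terms of the real blocks of $\Sigma$, and conversely recover $\Sigma$ from $(\Gamma, R)$ via $T$; a direct Jacobian computation then shows that the pushforward under $\Psi$ of the real density $\tfrac{1}{(2\pi)^{n}|\Sigma|^{1/2}} \exp(-\tfrac{1}{2} v^{\top}\Sigma^{-1} v)$ coincides with \eqref{def: complex normal distribution when R=0} when $R=0$ and with \eqref{eq-4} in general. The only genuinely delicate point is the bookkeeping of the factors of $2$ and of the determinant, so that $|\Gamma_R|^{1/2}$ in \eqref{eq-4} matches $|\Sigma|^{1/2}$ under the linear change of variables $T$; the scalar case $Z_1 = X_1 + \IM\, Y_1$ already illustrates that these factors reconcile exactly, and the general case follows from the same normalization applied blockwise.
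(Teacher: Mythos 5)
The paper does not actually prove this statement: Theorem~\ref{thm: Central limit thm} is stated in Appendix~B as a known fact, with only a pointer to the literature. Your reduction to the classical real multivariate CLT via $\CC^n \cong \RR^{2n}$ is the standard proof and it is correct: the i.i.d.\ hypothesis passes to $V_i = (X_i, Y_i)$, finiteness of $\Gamma$ alone already forces finiteness of all second moments of $V_1$ (since $\Gamma_{jj} = \operatorname{Var}[X_{1,j}] + \operatorname{Var}[Y_{1,j}]$ and both summands are nonnegative), the continuous mapping theorem applies because $\Psi$ is $\RR$-linear, and the final identification is exactly the bookkeeping you describe. For the record, with $T = \bigl(\begin{smallmatrix} I & \IM I \\ I & -\IM I\end{smallmatrix}\bigr)$ one has $\Gamma_R = T \Sigma T^{\mathcal{H}}$, so $|\Gamma_R| = |{\det T}|^2\,|\Sigma| = 4^n |\Sigma|$ and $(2\pi)^n |\Sigma|^{1/2} = \pi^n |\Gamma_R|^{1/2}$, while the quadratic forms match because $(\hat z - \hat\mu)^{\mathcal{H}} \Gamma_R^{-1} (\hat z - \hat\mu) = v^{\top}\Sigma^{-1} v$ for $v = (x,y) - \nu$; this confirms that your factors of $2$ reconcile. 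The one point you leave implicit is nondegeneracy: the density \eqref{eq-4} only defines $\mathcal{N}_{\CC}(\mu,\Gamma,R)$ when $\Gamma_R$ (equivalently $\Sigma$) is positive definite, so either that hypothesis should be added or the limit should be interpreted as a possibly degenerate Gaussian; the paper is silent on this as well, and in its application $\Gamma = g_B(z_0)$ is positive definite, so nothing is lost.
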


	\begin{thm}[Slutsky's Theorem] \label{thm: Slutshky}
		Let  $\{ Z_{i} \}_{i=1}^{\infty}$ and  $\{ W_{i} \}_{i=1}^{\infty}$ be sequence of complex random vectors (resp. matrices). 
		If $Z_i \xrightarrow{d} Z$ and $W_i \xrightarrow{d} C$, where $Z$ is a complex random vector (resp. matrix) and $C$ is a constant vector (resp. matrix), then
		\begin{enumerate}
			\item $Z_i + W_i \xrightarrow{d} Z + C$.
			\item $Z_i W_i \xrightarrow{d} Z  C$.
			\item $Z_i / W_i \xrightarrow{d} Z / C$ provided that $C$ is invertible.
		\end{enumerate}
	\end{thm}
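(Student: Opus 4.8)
The plan is to reduce all three assertions to a single joint convergence statement together with the continuous mapping theorem, which is the standard route for Slutsky-type results. The two tools I would rely on are: (i) convergence in distribution to a \emph{constant} is equivalent to convergence in probability to that constant; and (ii) the continuous mapping theorem, namely that if $U_i \xrightarrow{d} U$ and $g$ is a measurable map continuous on a set $S$ with $P(U \in S)=1$, then $g(U_i) \xrightarrow{d} g(U)$. Both hold verbatim for $\CC$-valued random vectors and matrices, viewed as random elements of the finite-dimensional real vector spaces $\CC^n \cong \RR^{2n}$ or $\CC^{n \times n}$; these are classical and may be cited from \cite{MR3440215}.

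First I would prove (i). Since $C$ is constant, its law is the Dirac mass $\delta_C$, and for any $\epsilon > 0$ the set $F_\epsilon := \{ w : \|w - C\| \ge \epsilon \}$ is a closed Borel set with $\delta_C(F_\epsilon) = 0$. The hypothesis $W_i \xrightarrow{d} C$ together with the portmanteau theorem gives $\limsup_i P(\|W_i - C\| \ge \epsilon) = \limsup_i P_{W_i}(F_\epsilon) \le \delta_C(F_\epsilon) = 0$, which is exactly $W_i \xrightarrow{p} C$.

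Next I would upgrade the two marginal convergences to the joint convergence
\[
(Z_i, W_i) \xrightarrow{d} (Z, C).
\]
This is the crux of the argument, and it is precisely where the constancy of the limit $C$ is used; in general marginal convergence in distribution does not force joint convergence. The map $z \mapsto (z, C)$ is continuous, so $Z_i \xrightarrow{d} Z$ yields $(Z_i, C) \xrightarrow{d} (Z, C)$ by the continuous mapping theorem. On the other hand, the difference $(Z_i, W_i) - (Z_i, C) = (0, W_i - C) \xrightarrow{p} 0$ by (i). The converging-together lemma (if $V_i \xrightarrow{d} V$ and $U_i - V_i \xrightarrow{p} 0$ then $U_i \xrightarrow{d} V$, a standard consequence of the portmanteau theorem) with $V_i = (Z_i, C)$ and $U_i = (Z_i, W_i)$ then delivers the claimed joint convergence.

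Finally I would invoke the continuous mapping theorem for the three maps $g_+(z,w) = z + w$, $g_\times(z,w) = z w$, and $g_\div(z,w) = z w^{-1}$. The first two are continuous on all of $\CC^n \times \CC^{n \times n}$, so applying them to $(Z_i, W_i) \xrightarrow{d} (Z, C)$ yields $Z_i + W_i \xrightarrow{d} Z + C$ and $Z_i W_i \xrightarrow{d} ZC$. The map $g_\div$ is continuous only on the open set $S = \{ (z,w) : w \text{ invertible} \}$; since $C$ is assumed invertible we have $P\big( (Z,C) \in S \big) = 1$, so the hypothesis of the continuous mapping theorem is met and $Z_i / W_i \xrightarrow{d} Z/C$ follows. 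The main obstacle is the joint-convergence step, which must be handled via the converging-together lemma and genuinely depends on $C$ being constant; the division case additionally requires invertibility of $C$ so that the discontinuity locus of $g_\div$ is null under the limiting law.
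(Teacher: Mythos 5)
Your proof is correct, but there is nothing in the paper to compare it against: the paper states Slutsky's theorem in Appendix B without proof, as a standard fact imported from the probability literature (with a pointer to \cite{MR3440215}), since Appendix B only collects the tools needed for Theorem \ref{thm: CTL for Bergman metric}. Your argument is the canonical textbook proof, and all three steps are sound: the portmanteau argument showing that convergence in distribution to a constant forces convergence in probability; the upgrade from marginal to joint convergence $(Z_i, W_i) \xrightarrow{d} (Z, C)$ via the converging-together lemma, which you correctly identify as the crux and as the place where constancy of $C$ is indispensable; and the continuous mapping theorem applied to addition, multiplication, and $g_{\div}(z,w) = z w^{-1}$ on the open set of invertible $w$, where invertibility of $C$ guarantees the discontinuity set is null under the law of $(Z, C)$. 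The identification $\CC^n \cong \RR^{2n}$ (and similarly for matrices) is exactly the right reduction to the classical real-vector-valued statement. One pedantic remark: the paper's Appendix B defines $Z_i \xrightarrow{d} Z$ by convergence of $P_{Z_i}(B)$ for \emph{all} Borel sets $B$, which is setwise convergence and strictly stronger than the weak convergence your portmanteau-based proof (and the paper's own use of the CLT in Section 5) actually requires; your proof is correct for the standard notion, and the paper's definition should be read as an imprecise rendering of that notion rather than as a different hypothesis you failed to verify.
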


	\begin{thm}[Strong law of large numbers over complex numbers] \label{thm: Strong law}
		Let $\{ Z_{i} \}_{i=1}^{\infty}$ be a sequence of i.i.d. complex random vectors with $\operatorname{E}[|Z_1|] < \infty$.
		Then 
		$$ \frac{1}{m}\sum_{i=1}^{m}Z_i \xrightarrow{a.s.} \operatorname{E}[Z_1] $$ 
		as $m \rightarrow \infty$.
	\end{thm}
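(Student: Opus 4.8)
The plan is to reduce the statement to the classical Kolmogorov strong law of large numbers for real-valued random variables by passing to real and imaginary parts of each coordinate. First I would write each complex random vector $Z_i = (Z_{i,1}, \dots, Z_{i,n})$ in terms of its real data: for each $1 \le j \le n$ set $Z_{i,j} = X_{i,j} + \IM\, Y_{i,j}$, where $X_{i,j} = \re(Z_{i,j})$ and $Y_{i,j} = \mathrm{Im}(Z_{i,j})$ are real random variables. Since $\{Z_i\}_{i=1}^{\infty}$ are i.i.d., each of the $2n$ scalar sequences $\{X_{i,j}\}_{i=1}^{\infty}$ and $\{Y_{i,j}\}_{i=1}^{\infty}$ is again i.i.d. Moreover, from $|X_{1,j}| \le |Z_{1,j}| \le \norm{Z_1}$ and the analogous bound for $Y_{1,j}$, the hypothesis $E[\norm{Z_1}] < \infty$ forces $E[|X_{1,j}|] < \infty$ and $E[|Y_{1,j}|] < \infty$ for every $j$; in particular each scalar sequence has a finite mean, so the real-valued strong law applies to it.

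Next I would apply the classical Kolmogorov strong law of large numbers to each of these $2n$ scalar sequences. This yields, for each $j$,
\[
\frac{1}{m}\sum_{i=1}^m X_{i,j} \xrightarrow{a.s.} E[X_{1,j}] \qquad\text{and}\qquad \frac{1}{m}\sum_{i=1}^m Y_{i,j} \xrightarrow{a.s.} E[Y_{1,j}]
\]
as $m \to \infty$. Each of these convergences holds off a null event; since there are only finitely many such null events (namely $2n$ of them), their union is again null, so all $2n$ convergences hold simultaneously on a single event $E_0$ with $P(E_0) = 1$.

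Finally, on the full-measure event $E_0$ I would recombine coordinatewise: for each $j$,
\[
\frac{1}{m}\sum_{i=1}^m Z_{i,j} = \frac{1}{m}\sum_{i=1}^m X_{i,j} + \IM\,\frac{1}{m}\sum_{i=1}^m Y_{i,j} \longrightarrow E[X_{1,j}] + \IM\, E[Y_{1,j}] = E[Z_{1,j}],
\]
and since this holds in every coordinate, the vector average $\frac{1}{m}\sum_{i=1}^m Z_i$ converges almost surely to $E[Z_1]$, as claimed. The argument carries no genuine analytic obstacle, as it merely repackages the scalar real law; the only two points requiring a moment of care are the passage from the norm-integrability hypothesis $E[\norm{Z_1}]<\infty$ to coordinatewise integrability, and the observation that a \emph{finite} intersection of almost-sure events is again almost sure, which is precisely what allows the separate coordinatewise convergences to be upgraded to joint (vector) almost-sure convergence.
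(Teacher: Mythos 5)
Your proof is correct: the reduction to the classical Kolmogorov strong law via the $2n$ real and imaginary coordinate sequences, together with the observation that a finite union of null events is null, is the standard and complete argument. Note that the paper itself offers no proof of this statement --- it appears in Appendix~B as background material, stated for completeness with a citation --- so there is no authorial argument to compare against; your write-up simply supplies the routine verification the authors took for granted.
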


	\bibliographystyle{abbrv}
	\bibliography{reference}

\end{document}